\documentclass[a4paper,oneside,11pt]{amsart}

\usepackage{geometry}
\usepackage[english]{babel}
\usepackage{amsmath}
\usepackage[utf8]{inputenc}
\usepackage[T1]{fontenc}
 
\usepackage{amsthm}
\usepackage{amssymb}
\usepackage[all]{xy}
\usepackage[,colorlinks=true,citecolor= DarkBlue,linkcolor=Black]{hyperref}
\usepackage[svgnames,table]{xcolor}
\usepackage{graphicx}
\usepackage{tikz,tkz-tab}

\theoremstyle{definition}
\newtheorem{definition}{Definition}[section]

\theoremstyle{plain}
\newtheorem{theorem}{Theorem}
\newtheorem*{theorem*}{Theorem}
\newtheorem{proposition}[definition]{Proposition}
\newtheorem*{proposition*}{Proposition}
\newtheorem{lemma}[definition]{Lemma}
\newtheorem*{lemma*}{Lemma}
\newtheorem{sublemma}[definition]{Sub-lemma}
\newtheorem*{sublemma*}{Sub-lemma}
\newtheorem{corollary}[definition]{Corollary}
\newtheorem{observation}{Observation}

\newtheorem{fact}{Fact}
\newtheorem*{fact*}{Fact}

\theoremstyle{remark}
\newtheorem{remark}[definition]{Remark}

\newcommand{\Oc}{\mathbf{O}}
\renewcommand{\H}{\mathbf{H}}
\newcommand{\C}{\mathbf{C}}
\newcommand{\R}{\mathbf{R}}

\newcommand{\gl}{\mathfrak{gl}}
\renewcommand{\sl}{\mathfrak{sl}}
\newcommand{\su}{\mathfrak{su}}
\renewcommand{\sp}{\mathfrak{sp}}

\newcommand{\co}{\mathfrak{co}}
\renewcommand{\a}{\mathfrak{a}}
\newcommand{\g}{\mathfrak{g}}
\newcommand{\h}{\mathfrak{h}}    
\renewcommand{\k}{\mathfrak{k}}    
\newcommand{\m}{\mathfrak{m}}

\newcommand{\so}{\mathfrak{so}}
\newcommand{\p}{\mathfrak{p}}
\newcommand{\s}{\mathfrak{s}}

\newcommand{\z}{\mathfrak{z}}
\newcommand{\f}{\mathfrak{f}}

\newcommand{\e}{\mathrm{e}}
\renewcommand{\d}{\mathrm{d}}

\DeclareMathOperator{\Isom}{Isom}
\DeclareMathOperator{\Ad}{Ad}  
\DeclareMathOperator{\ad}{ad}

\DeclareMathOperator{\Ker}{Ker}

\DeclareMathOperator{\Span}{Span}

\DeclareMathOperator{\Kill}{Kill}

\DeclareMathOperator{\GL}{GL}
\DeclareMathOperator{\SL}{SL}
\DeclareMathOperator{\CO}{CO}

\DeclareMathOperator{\PO}{PO}
\DeclareMathOperator{\PU}{PU}
\DeclareMathOperator{\SO}{SO}
\DeclareMathOperator{\SU}{SU}

\DeclareMathOperator{\Sp}{Sp}
\DeclareMathOperator{\Spin}{Spin}
\DeclareMathOperator{\Ima}{Im}

\DeclareMathOperator{\Rk}{Rk}
\DeclareMathOperator{\Conf}{Conf}
\DeclareMathOperator{\Tr}{Tr}
\DeclareMathOperator{\id}{id}
\DeclareMathOperator{\diag}{diag}
\DeclareMathOperator{\Gr}{Gr}

\renewcommand{\S}{\mathbf{S}}
\renewcommand{\H}{\mathbf{H}}
 
\newcommand{\Ein}{\mathbf{Ein}} 
\newcommand{\X}{\mathbf{X}}

\renewcommand{\epsilon}{\varepsilon}
\renewcommand{\geq}{\geqslant}
\renewcommand{\leq}{\leqslant}
\renewcommand{\hat}{\widehat}  
\newcommand{\hx}{\hat{x}}
\renewcommand{\bar}{\overline}

\setcounter{tocdepth}{1}

\title[Conformal actions of simple Lie groups of rank $1$]{Conformal actions of real-rank $1$ simple Lie groups on pseudo-Riemannian manifolds}

\author{Vincent Pecastaing}

\begin{document}

\maketitle

\vspace{-.5cm}

\begin{center}
{\small
University of Luxembourg, Mathematics Research Unit \\
6, rue Richard Coudenhove-Kalergi L-1359 Luxembourg 
\footnote{
The author also acknowledges support from U.S. National Science Foundation grants DMS 1107452, 1107263, 1107367 "RNMS: Geometric Structures and Representation Varieties" (the GEAR Network).
}
}
\end{center}

\vspace{.4cm}

\begin{center}
\today
\end{center}

\begin{abstract}
Given a simple Lie group $G$ of rank $1$, we consider compact pseudo-Riemannian manifolds $(M,g)$ of signature $(p,q)$ on which $G$ can act conformally. Precisely, we determine the smallest possible value for the index $\min(p,q)$ of the metric. When the index is optimal and $G$ non-exceptional, we prove that $g$ must be conformally flat, confirming the idea that in a ``good'' dynamical context, a geometry is determined by its automorphisms group. This completes anterior investigations on pseudo-Riemannian conformal actions of semi-simple Lie groups of maximal real-rank \cite{zimmer87,bader_nevo,frances_zeghib}. Combined with these results, we obtain as corollary the list of semi-simple Lie groups without compact factor that can act on compact Lorentzian manifolds. We also derive consequences in CR geometry via the Fefferman fibration.
\end{abstract}

\tableofcontents

\section{Introduction}

Let $(M,g)$ be a pseudo-Riemannian manifold of signature $(p,q)$. When $p+q \geq 3$, the conformal class $[g] = \{e^{\sigma} g, \ \sigma \in \mathcal{C}^{\infty}(M)\}$ is a \textit{rigid} geometric structure on $M$ \cite{gromov88}. The group $\Conf(M,g)$ of diffeomorphisms preserving $[g]$ is then a Lie transformation group. Thus, under dynamical hypothesis, $\Conf(M,g)$ may have only few possible forms and furthermore, the metric also may have a prescribed form. In Riemannian geometry, the Ferrand-Obata theorem is a typical illustration of these principles \cite{ferrand71,ferrand96,obata71}, see also \cite{matveev} and \cite{schoen,webster} in other rigid geometric structures. This article is a contribution confirming this idea in other signatures, when $M$ is compact and $\Conf(M,g)$ contains simple Lie subgroups of rank $1$.

\vspace{.2cm}

Dynamics of semi-simple Lie groups on rigid geometric structures is a rich subject on which we have now many remarkable results, among which we may cite \cite{zimmer86,gromov88,zeghib97,kowalski,quiroga06,bader_frances_melnick}. The situation is well understood when the geometry naturally defines a finite invariant measure, but is less clear in the non-unimodular case, conformal dynamics being a good example of which.

Of course, the real-rank of the acting group has a central role. In a general context \cite{zimmer87}, Zimmer bounded the rank of semi-simple Lie groups without compact factor that act on compact manifolds by preserving a $G$-structure. In the case of a conformal structure (\textit{i.e.} $G = \CO(p,q)$, say with $p \leq q$), he obtained that the rank of the acting group is at most $p+1$. This upper bound is optimal since $\PO(p+1,q+1)$ is the conformal group of the\textit{ Einstein Universe }$\Ein^{p,q}$. Recall that this space is a smooth quadric in $\R P^{p+q+1}$ obtained by projectivizing the light-cone of $\R^{p+1,q+1}$. It naturally carries a conformal class of signature $(p,q)$, whose conformal group coincides with $\PO(p+1,q+1)$. In \cite{bader_nevo}, Bader and Nevo proved that when the group is simple and precisely has rank $p+1$, then it is locally isomorphic to $\PO(p+1,k)$, with $p+1 \leq k \leq q+1$. Finally, Frances and Zeghib proved that in the same context of maximal rank, $(M,g)$ is conformal to a quotient of the universal cover of $\Ein^{p,q}$ \cite{frances_zeghib}.

\vspace{.2cm}

These works describe very well the situation when a semi-simple Lie group of \textit{maximal} real-rank acts, but not for smaller values of the rank. In this article, we propose new contributions in the ``opposite'' situation of simple groups of rank $1$, \textit{i.e.} when the rank is the smallest possible. Our original motivation is the understanding of semi-simple Lie groups acting on compact Lorentzian manifolds, which - by the above results - is mainly reduced to the rank $1$ case. Since the approach we use naturally applies to other signatures, and has consequences in Levi non-degenerate CR geometry, we have chosen to highlight mainly the pseudo-Riemannian version of our results. This possibly opens a work leading to a complete description of semi-simple conformal groups of compact pseudo-Riemannian manifolds, that we leave for further investigations.

\subsection{Main results}

Up to local isomorphism, the list of rank $1$ simple Lie groups is $\SO(1,k)$, $\SU(1,k)$, $\Sp(1,k)$, with $k \geq 2$, and the real form $F_4^{-20}$ of $F_4$ (sometimes noted $FII$). We call \textit{index} of a pseudo-Riemannian metric the dimension of its maximal isotropic subspaces. Intuitively, metrics with large index admit lots of possible group actions since there are ``large spaces'' with no constraints. The question we ask here is, given a group $G$ of rank $1$, what is the \textit{smallest possible index} of a compact pseudo-Riemannian manifold on which it can act conformally. This will determine in some sense the indices compatible with the structure of $G$. When $G$ acts conformally on $(M,g)$ whose index is optimal, we naturally expect that there is not a large choice for the geometry of $(M,g)$. Except in the case of $F_4^{-20}$, this will be confirmed by our results. 

\vspace{.2cm}

Since $\SO(1,k)$ acts conformally on the Möbius sphere $\S^{k-1}$, the smallest possible index for this group is $0$, and by Ferrand-Obata theorem, this is the only possible example of a conformal Riemannian action of this group. By the same result, $\SU(1,k)$ \textit{cannot} act on a compact Riemannian manifold, but it acts on the Lorentzian Einstein Universe $\Ein^{1,2k-1}$, so the smallest possible index is $1$. Moreover, by Corollary 1 of \cite{article4}, when a Lie group locally isomorphic to $\SU(1,k)$ acts conformally on a compact Lorentzian manifolds, this manifold must be some quotient of the universal cover of $\Ein^{1,n-1}$. 

\vspace{.2cm}

Thus, our question concerns rank $1$ symplectic groups and the real form of $F_4$. For the first ones, the answer is given by the following

\begin{theorem}
\label{thm:symplectic}
Let $(M,g)$ be a compact pseudo-Riemannian manifold of signature $(p,q)$, with $p+q \geq 3$. Assume that $M$ admits a conformal action of $\Sp(1,k)$, with $k \geq 2$. Then,
\begin{itemize}
\item $\min(p,q) \geq 3$;
\item And if $\min(p,q)=3$, then $p+q \geq 4k+2$ and $(M,g)$ is conformally flat.
\end{itemize}
\end{theorem}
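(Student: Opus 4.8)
The plan is to translate the conformal action into a Lie-algebra embedding via the canonical Cartan geometry, and then to read off the two numerical bounds and flatness from the quaternionic (spinorial) structure of $\sp(1,k)$.

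First I would attach to $[g]$ its canonical normal Cartan geometry $(\mathcal B\to M,\omega)$ modelled on $(\so(p+1,q+1),P)$ and lift the $\Sp(1,k)$-action to $\mathcal B$. The dynamical input is to produce a point $x_0$ and a lift $\hat x_0$ at which the holonomy map $X\mapsto\omega_{\hat x_0}(X^\ast)$ is an \emph{injective Lie-algebra morphism} $\iota\colon\sp(1,k)\hookrightarrow\so(p+1,q+1)$: noncompactness of $\Sp(1,k)$ together with compactness of $M$ forces an approximately stable fixed point of the $\R$-split one-parameter group $\exp(th)$, whose holonomy linearises $\iota$. Writing $V=\R^{p+1,q+1}$ and $\so(V)=\so(p+1,q+1)$, a weight analysis of $V|_{\sp(1,k)}$ (using that the extreme restricted root spaces $\g_{\pm2\alpha}$ are nonzero, so that $\ad\iota(h)$ attains eigenvalue $\pm2$ on the image) shows, after normalisation, that $\hat h=\iota(h)$ is $\R$-split with eigenvalues in $\{-1,0,1\}$ on $V$, the $\pm1$-eigenspaces $V_{\pm1}$ being isotropic, mutually dual, of a common dimension $m\le\min(p,q)+1$.

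The crux, and the reason the optimal index is $3$ and not $2$, is to show $m\ge4$. For this I would single out the subalgebra $\mathfrak l=\sp(1)\oplus\a\oplus\g_{2\alpha}\oplus\g_{-2\alpha}$, where $\g_{\pm2\alpha}\cong\Ima\mathbb H$; a dimension and bracket check gives $\mathfrak l\cong\sp(1,1)\cong\so(4,1)$, with $\g_{\pm2\alpha}$ its restricted root spaces. Since $\a$ is the common Cartan line and $\ad h=2\ad h'$ on $\g_{2\alpha}$, the Cartan generator $h'$ of $\mathfrak l$ satisfies $h=2h'$, so $\iota(h')=\tfrac12\hat h$ acts on $V$ with eigenvalues in $\{-\tfrac12,0,\tfrac12\}$. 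Half-integral weights mean that $V_{\pm1}$ are \emph{spinorial} weight spaces for $\Spin(4,1)=\Sp(1,1)$; as the smallest real $\Sp(1,1)$-module carrying the weight $\tfrac12$ is the quaternionic spin module $\mathbb H^2\cong\R^{4,4}$, whose $\tfrac12$-weight space is $4$-dimensional, we obtain $m\ge4$, whence $\min(p,q)\ge m-1\ge3$. This settles the first bullet.

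When $\min(p,q)=3$ we have $m=4$ and $V_{\pm1}$ maximal isotropic. For the inequality $p+q\ge4k+2$ I would use faithfulness of $\iota$: then $V|_{\sp(1,k)}$ has a nontrivial irreducible summand, and the smallest nontrivial real $\sp(1,k)$-module is the standard quaternionic representation $\mathbb H^{k+1}\cong\R^{4,4k}$ of real dimension $4(k+1)$, so $p+q+2=\dim V\ge4(k+1)$. For conformal flatness I would show the Weyl curvature vanishes at $x_0$: the curvature $\kappa_{\hat x_0}$ of $\omega$ lies in an $\hat h$-module on which $\Ad(\exp t\hat h)$ acts with eigenvalues of one fixed sign, while compactness of $M$ keeps $\kappa$ bounded along the contracting flow fixing $\hat x_0$; letting $t\to\infty$ kills the Weyl component at $x_0$. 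I would then propagate flatness to all of $M$ using that the basin of the contraction $\exp(th)$ saturates $M$ (conformal flatness being a closed, $G$-invariant condition), concluding that $(M,g)$ is conformally flat — the vanishing of the Weyl tensor when $p+q\ge4$, handled identically via the Cotton tensor when $p+q=3$.

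The hardest part will be the opening, dynamical step: producing the point $x_0$ at which $X\mapsto\omega_{\hat x}(X^\ast)$ upgrades from a mere linear map (distorted by curvature) to a genuine graded Lie-algebra embedding. This is where one must invoke the rank-one conformal dynamics — approximately stable sequences and holonomy renormalisation in the spirit of Frances and Bader–Frances–Melnick. Once the embedding is secured, the two decisive points, namely that the grading is spinorial for the $\so(4,1)$-subalgebra (forcing $m\ge4$) and that the standard module is the smallest faithful one (forcing $p+q\ge4k+2$), are purely representation-theoretic, and flatness follows from the standard curvature-contraction argument.
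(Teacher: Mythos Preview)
Your proposal has a genuine gap at its foundation. The holonomy map $\iota_{\hat x}\colon X \mapsto \omega_{\hat x}(X^*)$ is a linear injection, but it is \emph{not} a Lie algebra homomorphism in general; the obstruction involves the curvature of $\omega$, so asking for a point where $\iota$ embeds $\sp(1,k)$ into $\so(p+1,q+1)$ as Lie algebras is essentially asking for a flat point --- which is what you want to prove. The machinery you cite (holonomy sequences, the Bader--Frances--Melnick embedding theorem) delivers only the weaker conclusion recorded here as Proposition~\ref{prop:virtual_isotropy}: at a $\mu$-generic $x$, the isotropy $\g_x$ is $\Ad(S)$-invariant and $\Ad(S)$ acts conformally on $\g/\g_x$. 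All of your weight analysis on $V=\R^{p+1,q+1}$ --- the normalization of the eigenvalues of $\hat h$, the $\so(4,1)$-spinorial argument for $m\ge 4$, the smallest-faithful-module bound $\dim V\ge 4(k+1)$ --- presupposes that $\iota$ respects brackets, so none of it is grounded. There are secondary gaps too: the claim that $\Ad(e^{t\hat h})$ contracts the Weyl module (in fact its eigenvalues there have both signs; the paper needs the unipotent isotropy $e^{X_\alpha},e^{X_{2\alpha}}$ in addition to $\phi_H^t$, cf.\ Lemma~\ref{lem:unipotent} and the computations of Section~\ref{s:conformal_flatness}), and the claim that the basin of $e^{tH}$ saturates $M$.

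The paper avoids all of this by never seeking an embedding into $\so(p+1,q+1)$. It works directly with the conformal $\Ad(S)$-action on $\g/\g_x$: Observation~\ref{obs:transversality} and Proposition~\ref{prop:lower_bound} force $\g_{-2\alpha}(x)$ to be isotropic of dimension~$3$, giving $\min(p,q)\ge 3$; Lemma~\ref{lem:signature_optimal} determines $\g_x = \a\oplus(\m\cap\g_x)\oplus\g_\alpha\oplus\g_{2\alpha}$ and exhibits a non-degenerate orbit of dimension $3+4(k-1)+3=4k+2$, giving $p+q\ge 4k+2$; Proposition~\ref{prop:linearizability} then linearizes $\phi_H^t$, and the vanishing of $W$ is obtained first on the orbit (using both the hyperbolic and the unipotent isotropy), then on a neighborhood $U$, and finally on all of $M$ because every $G$-orbit closure meets such an orbit (Remark~\ref{rem:orbit_closure}). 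Your representation-theoretic shortcuts are elegant, but they would require either a flat point to begin with (circular) or an embedding theorem stronger than what currently exists.
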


Thus, $\Sp(1,k)$ never appears in the conformal group of any compact pseudo-Riemannian manifold of signature $(1,n)$ or $(2,n)$, $n \geq 2$. There is a natural embedding $\Sp(1,k) \hookrightarrow \SO(4,4k)$, and the corresponding conformal action on $\Ein^{3,4k-1}$ is transitive. This $\Sp(1,k)$-homogeneous space is in fact a principal bundle $\Sp(1) \rightarrow \Ein^{3,4k-1} \rightarrow \partial \mathbb{H}_{\H}^k$ 
over the boundary of the quaternionic symmetric space. Thus, the bounds of the previous result are sharp. In fact, the proof shows that when $\Sp(1,k)$ acts on a pseudo-Riemannian manifold of signature $(3,n)$, then any (if any) compact minimal invariant subset is a single orbit conformally equivalent to this homogeneous space fibering over $\partial \mathbb{H}_{\H}^k$ (compare with \cite{nevo_zimmer}).

\vspace{.2cm}

Concerning $F_4^{-20}$, we obtain the following result.

\begin{theorem}
\label{thm:exceptional}
Let $(M,g)$ be a compact pseudo-Riemannian manifold of signature $(p,q)$, with $n = p+q \geq 3$. Assume that $M$ admits a conformal action of the exceptional simple Lie group $F_4^{-20}$ of real-rank $1$. Then $\min(p,q) \geq 9$.
\end{theorem}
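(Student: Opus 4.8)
The plan is to reduce the statement to a purely representation-theoretic constraint on $\mathfrak{f}_4^{-20} = \Lie(F_4^{-20})$, obtained by linearizing the conformal action. As for Theorem \ref{thm:symplectic}, the starting point is that a conformal class of signature $(p,q)$, $p+q\geq 3$, is equivalent to a normalized Cartan geometry modeled on $(\mathfrak{so}(p+1,q+1),P)$, $P$ the stabilizer of an isotropic line, whose homogeneous model is $\Ein^{p,q}$ with conformal group $\PO(p+1,q+1)$. Exploiting the dynamics of an $\R$-split one-parameter subgroup $\{\exp(ta)\}$, $a\in\mathfrak{a}$, of $F_4^{-20}$ acting on the compact manifold $M$ — which by a recurrence/holonomy argument admits a point whose isotropy in the Cartan bundle absorbs the flow — one produces an embedding of Lie algebras
\[
\iota:\mathfrak{f}_4^{-20}\hookrightarrow\mathfrak{so}(p+1,q+1).
\]
First I would isolate this embedding as the single geometric input; it is the rank-one instance of the mechanism behind the cited work of Frances--Zeghib and of \cite{bader_frances_melnick}, and everything afterwards is representation theory. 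Note this gives $\min(p+1,q+1)\geq 10$ rather than the na\"ive $10$ one would get from a genuine fixed point on $M$ (which in general does not exist, e.g.\ on $\Ein^{9,15}$), and is what makes the bound $9$ sharp.

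Composing $\iota$ with the standard representation, $F_4^{-20}$ acts linearly on $V=\R^{p+1,q+1}$ preserving a nondegenerate symmetric form $B$ of signature $(p+1,q+1)$, effectively since $\iota$ is injective and $\mathfrak{so}(V)$ acts faithfully on $V$. As $\mathfrak{f}_4^{-20}$ is simple, Weyl's theorem gives $V=V^{\mathfrak{g}}\oplus\bigoplus_i W_i$ with the $W_i$ nontrivial and irreducible. The smallest nontrivial irreducible representation of $F_4$ has dimension $26$ — the trace-free part of the exceptional Jordan algebra $\mathfrak{h}_3(\Oc)$, defined over $\R$ — so each $W_i$ has dimension at least $26$ and at least one factor $W:=W_1$ occurs; in particular $p+q+2\geq 26$.

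The crux is the signature of the (unique up to scale) invariant symmetric form on $W$. I would compute it via the Cartan decomposition $\mathfrak{f}_4^{-20}=\mathfrak{k}\oplus\mathfrak{p}$ with $\mathfrak{k}=\mathfrak{so}(9)$ and $\mathfrak{p}\cong\Delta_9$ the real $16$-dimensional spin module, together with the branching $W=\R\oplus\R^9\oplus\Delta_9$ under $\Spin(9)$ (dimensions $1+9+16$). Lift the Cartan involution $\theta$ to an involution $\tau$ of $W$ commuting with $\mathfrak{k}$ and anticommuting with $\mathfrak{p}$: $\Spin(9)$-equivariance forces $\tau$ to be a scalar on each summand, while the requirement that it anticommute with the Clifford-type action of $\mathfrak{p}=\Delta_9$ (which intertwines $\R\oplus\R^9$ with $\Delta_9$) forces $\tau=\pm(\Id_{\R},\Id_{\R^9},-\Id_{\Delta_9})$. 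Since $(v,w)\mapsto B(v,\tau w)$ is the associated definite form, the signature of $B|_W$ equals the pair of dimensions of the $\tau$-eigenspaces, namely $(1+9,\,16)=(10,16)$ (equivalently $(16,10)$).

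Finally I would transfer this to $V$. Because $W\cap W^{\perp}$ is a subrepresentation of the irreducible $W$, the subspace $W\subset V$ is either nondegenerate for $B$ or totally isotropic. In the first case $B|_W$ has signature $(10,16)$, so $\R^{p+1,q+1}$ contains a nondegenerate subspace of that signature and $\min(p+1,q+1)\geq 10$; in the second case $26=\dim W\leq\min(p+1,q+1)$. Either way $\min(p+1,q+1)\geq 10$, that is $\min(p,q)\geq 9$, as claimed. I expect the genuine difficulty to lie entirely in the first step — manufacturing $\iota$ from the conformal dynamics on a compact manifold carrying no invariant measure — whereas the algebraic core, once the signature $(10,16)$ of the minimal representation of $F_4^{-20}$ is in hand, is short and forces the bound (sharply, as it is realized on $\Ein^{9,15}$ through $F_4^{-20}\subset\PO(10,16)$).
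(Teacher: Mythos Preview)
Your Step~2 is fine, and the paper itself uses exactly the facts you isolate: the smallest nontrivial representation of $\mathfrak{f}_4^{-20}$ has dimension $26$ and its invariant form has signature $(10,16)$. The gap is entirely in Step~1.

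The embedding $\iota:\mathfrak{f}_4^{-20}\hookrightarrow\mathfrak{so}(p+1,q+1)$ you postulate does not come for free from recurrence of a one-parameter flow. At a point $\hat x$ of the Cartan bundle, the map $X\mapsto\omega_{\hat x}(\tilde X_{\hat x})$ is a linear injection $\mathfrak{f}_4^{-20}\to\mathfrak{so}(p+1,q+1)$, but its failure to be a Lie algebra homomorphism is exactly the curvature $\Omega_{\hat x}(\tilde X,\tilde Y)$. Recurrence of $\exp(ta)$ gives you control only over the holonomy of the isotropy at a suitable point---it tells you nothing about brackets of elements of $\mathfrak{f}_4^{-20}$ that do \emph{not} fix that point. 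The embedding theorems you allude to (Zimmer's, and its Cartan-geometric version in \cite{bader_frances_melnick}) require an invariant probability measure for the acting group; this is available for the amenable $S=AN$, not for $F_4^{-20}$ itself. What one actually extracts is Proposition~\ref{prop:virtual_isotropy}: a point $x$ where $\Ad(S)\mathfrak g_x\subset\mathfrak g_x$ and $\Ad(S)$ acts conformally on $\mathfrak g/\mathfrak g_x$. This is far weaker than a global embedding into $\mathfrak{so}(p+1,q+1)$; indeed, if your $\iota$ were available, the entire content of Section~\ref{s:f4} would be redundant.

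The paper instead uses Proposition~\ref{prop:virtual_isotropy} to get $\min(p,q)\geq 7$ (Proposition~\ref{prop:lower_bound}), and then rules out indices $7$ and $8$ by a case analysis on the possible isotropy $\mathfrak g_x$, exploiting the subalgebra structure of $\mathfrak m\simeq\mathfrak{so}(7)$ (only $\mathfrak{so}(6)$ and $\mathfrak g_2$ occur in small codimension). In several branches it \emph{proves} local conformal flatness of the orbit or of a neighborhood, and only then invokes the representation-theoretic obstruction you describe to reach a contradiction. In short: your $\iota$ is the conclusion one is working toward in the flat cases, not an input one can assume.
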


The lower bound in the previous theorem is sharp, and it is achieved once more when the group acts on some Einstein Universe. We do not know if the metric must be conformally flat when it has index $9$, because our methods are clearly more difficult to manipulate with this group.

\vspace{.2cm}

All real forms of $F_4$ can be realized as derivation algebras of traceless \textit{Albert algebras} - also called \textit{exceptional simple Jordan algebras} - over real Cayley algebras of dimension $8$ (\cite{tomber}). In the case we are interested in, let $\Oc$ denote the classical algebra of octonions, and let $p$ be the diagonal $3 \times 3$ matrix $\diag(-1,1,1)$. Following \cite{tomber}, we define $\mathfrak{J}(\Oc,p)$ to be the set of ``$p$-Hermitian'' $3 \times 3$ matrix $x$ with coefficients in $\Oc$, \textit{i.e.} satisfying $x = p x^* p^{-1}$, where $x^*$ denotes the transconjugate of $x$. Explicitly, $\mathfrak{J}(\Oc,p)$ is formed by the matrices
\begin{equation*}
x =
\left ( \!\!\!
\begin{matrix}
\quad \xi_1 & c_1 & c_3 \\
-\bar{c_1} & \xi_2 & c_2 \\
-\bar{c_3} & \bar{c_2} & \xi_3
\end{matrix}
\right )
, 
\text{ with } \xi_1,\xi_2,\xi_3 \in \R, \text{ and } c_1,c_2,c_3 \in \Oc.
\end{equation*}
Then, we endow $\mathfrak{J}(\Oc,p)$ with the product $x \star y = \frac{1}{2} (xy +yx)$ which makes it a commutative, non-associative real algebra, and we can state

\begin{theorem*}[\cite{tomber}, Th.6]
The Lie algebra $\f_4^{-20}$ is isomorphic to the Lie algebra of derivations of $\mathfrak{J}(\Oc,p)$.
\end{theorem*}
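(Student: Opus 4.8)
The plan is to reduce the identification to a classification invariant for real forms of $F_4$. First I would complexify: the complex algebra $\mathfrak{J}(\Oc,p)\otimes_{\R}\C$ is, up to isomorphism, the unique complex exceptional simple Jordan algebra of dimension $27$, and its derivation algebra is the complex simple Lie algebra $\f_4^{\C}$ — this is the classical realization of $F_4$ as the derivation type of the Albert algebra. Since solving the (linear) derivation equations commutes with scalar extension, $\operatorname{Der}(\mathfrak{J}(\Oc,p))\otimes_{\R}\C\cong\operatorname{Der}\bigl(\mathfrak{J}(\Oc,p)\otimes_{\R}\C\bigr)=\f_4^{\C}$, so $\operatorname{Der}(\mathfrak{J}(\Oc,p))$ is automatically a real form of $F_4$, in particular semisimple of dimension $52$. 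It then remains only to decide which of the three real forms ($\f_4^{-52}$, $\f_4^{-20}$, $\f_4^{4}$) occurs, and these are completely distinguished by the signature of their Killing form, equivalently by their real rank ($0$, $1$, $4$).

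Next I would extract a computable invariant from the algebra itself. Every derivation annihilates the unit $e=\diag(1,1,1)$ and preserves the generic trace form $T(x,y)=\Tr(x\star y)$. A direct computation on the explicit matrices, using $c\bar c=N(c)$ for the octonionic norm $N$, gives
\begin{equation*}
T(x,x)=\xi_1^2+\xi_2^2+\xi_3^2-2N(c_1)+2N(c_2)-2N(c_3),
\end{equation*}
so that $T$ has signature $(11,16)$ over the division octonions, and signature $(10,16)$ on the trace-zero hyperplane $\mathfrak{J}_0=e^{\perp}$. Hence $\operatorname{Der}(\mathfrak{J}(\Oc,p))$ embeds into $\so(10,16)$, and the indefiniteness of $T$ already excludes the compact form $\f_4^{-52}$.

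Finally I would pin down the exact real form from the signature $(10,16)$ just found. The $26$-dimensional action on $\mathfrak{J}_0$ is the minimal faithful representation of $F_4$, and the signature of its invariant form separates the three real forms. For $\f_4^{-20}$, whose maximal compact subalgebra is $\k\cong\so(9)$, the module $\mathfrak{J}_0$ decomposes as $\mathbf{1}\oplus\mathbf{9}\oplus\mathbf{16}$ (the trivial, vector, and spinor representations of $\so(9)$), the $16$-dimensional spinor part accounting precisely for the negative-definite block — exactly the signature $(10,16)$ we computed. This yields $\dim\k=36$, $\dim\mathfrak{p}=52-36=16$, and Killing signature $16-36=-20$, singling out $\f_4^{-20}$. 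I expect this last identification to be the only genuinely delicate point: the complexification and the trace-form computation are routine, whereas matching the signature to the correct non-compact form (rather than the split $\f_4^{4}$) requires either the $\so(9)$-decomposition above or, equivalently, checking that the single hyperbolic direction introduced by the entry $-1$ of $p=\diag(-1,1,1)$ produces a one-dimensional maximal $\R$-split torus, i.e.\ that the real rank equals $1$.
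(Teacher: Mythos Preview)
The paper does not prove this statement at all: it is quoted verbatim as Theorem~6 of \cite{tomber} and used as a black box to obtain the embedding $\f_4^{-20}\hookrightarrow\so(10,16)$ needed later. So there is no proof in the paper to compare your proposal against; what the paper does do --- observing that derivations preserve $\mathfrak{J}_0$ and are skew for the trace form, and computing its signature as $(10,16)$ --- is exactly your middle step, used there as a \emph{consequence} of the cited theorem rather than as a step toward proving it.

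Your sketch is a correct strategy. The complexification argument is standard (both $\mathfrak{J}(\Oc,I_3)$ and $\mathfrak{J}(\Oc,p)$ complexify to the unique complex Albert algebra, so the derivation algebra is a real form of $\f_4^{\C}$), and your trace-form computation $T(x,x)=\xi_1^2+\xi_2^2+\xi_3^2-2N(c_1)+2N(c_2)-2N(c_3)$ with signature $(11,16)$, hence $(10,16)$ on $\mathfrak{J}_0$, is right. The one point to tighten is the last identification: arguing ``for $\f_4^{-20}$ the $26$-dimensional module has signature $(10,16)$, which matches'' is mildly circular, since the standard way to realize that module of $\f_4^{-20}$ is precisely via $\mathfrak{J}(\Oc,p)$. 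Your suggested alternative --- verifying directly that the real rank is $1$ --- is the clean way out: exhibit a one-dimensional $\R$-split torus in $\operatorname{Der}(\mathfrak{J}(\Oc,p))$ (it comes from the obvious $\SO(1,1)$ acting on the $(c_1,c_3)$ coordinates) and argue that no larger split torus exists, which distinguishes the algebra from the split form $\f_4^{4}$ of rank~$4$ and from the compact form of rank~$0$.
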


Any derivation of this Albert algebra is skew-symmetric with respect to the quadratic form $q(x,y) = \Tr(x \star y)$ and preserves the $26$-dimensional subspace $\mathfrak{J}_0 = \{x \in \mathfrak{J}(\Oc,p) \ | \ \Tr x = 0\}$. Moreover, $q$ is non-degenerate and its restriction to $\mathfrak{J}_0$ has signature $(10,16)$ and $\mathfrak{J}_0$ is an irreducible subrepresentation of $\f_4^{-20}$, proving the existence of an embedding $\f_4^{-20} \hookrightarrow \so(10,16)$. Finally, $F_4^{-20}$ acts conformally on the Einstein Universe of signature $(9,15)$.


\subsection{Semi-simple conformal groups of compact Lorentzian manifolds} 

Since the works of Zimmer in \cite{zimmer86}, we know that if a semi-simple Lie group acts isometrically on a compact Lorentzian manifold, then it is locally isomorphic to $\SL(2,\R) \times K$, where $K$ is a compact semi-simple Lie group. The main example is the case where $\SL(2,\R)$ acts by left multiplication on a quotient $\SL(2,\R) / \Gamma$, where $\Gamma$ is a uniform lattice. Indeed, the Killing metric of $\SL(2,\R)$ being bi-invariant and Lorentzian, it induces a left-invariant Lorentzian metric on any quotient $\SL(2,\R) / \Gamma$. In fact, this example is essentially the only irreducible one that we can produce (\cite{gromov88}).

\vspace{.2cm}

One of our motivations in this work is to extend this description to conformal actions. Let $G$ be a semi-simple Lie group without compact factor acting conformally on a compact Lorentzian manifold. By \cite{zimmer87}, $G$ has real-rank at most $2$. Theorems \ref{thm:symplectic} and \ref{thm:exceptional} implies that it cannot admit $\Sp(1,k)$ nor $F_4^{-20}$ as direct factor. In the case $\Rk_{\R}(G) = 2$, the manifold is conformal to a quotient of the universal cover of ${\Ein}^{1,n-1}$ by Theorem 1.5 of \cite{bader_frances_melnick}, proving that $G$ must be locally isomorphic to a subgroup of $\SO(2,n)$. So we just have to consider actions of $\SO(1,k)$ and $\SU(1,k)$ on compact Lorentzian manifolds to obtain: 

\begin{theorem}
\label{thm:classification_lorentz}
Let $(M^n,g)$, $n \geq 3$, be a compact Lorentzian manifold and $G$ be a semi-simple Lie group without compact factor. Assume that $G$ acts conformally on $M$. Then, $G$ is locally isomorphic to one of the following groups:
\begin{enumerate}
\item $\SO(1,k)$, $2 \leq k \leq n$ ;
\item $\SU(1,k)$, $2 \leq k \leq n/2$ ;
\item $\SO(2,k)$, $2 \leq k \leq n$ ;
\item $\SO(1,k) \times \SO(1,k')$, $k,k' \geq 2$ and $k+k' \leq \max(n,4)$.
\end{enumerate}
Conversely, for any $n \geq 3$, every group in this list acts conformally on a compact Lorentzian manifold of dimension $n$, namely $\Ein^{1,n-1}$.
\end{theorem}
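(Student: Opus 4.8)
The plan is to organise everything according to the real rank of $G$, feeding Theorems \ref{thm:symplectic} and \ref{thm:exceptional} into the cited rigidity results. Since $(M,g)$ is Lorentzian its signature is $(1,n-1)$, so $\min(p,q)=1$ and Zimmer's bound \cite{zimmer87} gives $\Rk_\R(G)\le\min(p,q)+1=2$. Writing $G$ (up to local isomorphism) as an almost-direct product of simple factors of positive real rank, each factor still acts conformally on the same $(M,g)$, hence with index $1$; Theorem \ref{thm:symplectic} then forbids any $\Sp(1,k)$ factor (it would force $\min(p,q)\ge3$) and Theorem \ref{thm:exceptional} forbids any $F_4^{-20}$ factor (it would force $\min(p,q)\ge9$). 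This pruning of the admissible simple factors is the decisive use of the present paper's two main theorems.

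\emph{Rank-one case.} Here $G$ is simple of real rank $1$, so after the exclusions it is locally isomorphic to $\SO(1,k)$ or $\SU(1,k)$, $k\ge2$. For $\SU(1,k)$, Corollary 1 of \cite{article4} identifies $(M,g)$ with a quotient of the universal cover of $\Ein^{1,n-1}$, so $G$ embeds in $\Conf(\widetilde{\Ein}^{1,n-1})$, a cover of $\PO(2,n)$; as $\SU(1,k)$ preserves a Hermitian form whose real part has signature $(2,2k)$, the embedding forces $2k\le n$, i.e. $k\le n/2$. For $\SO(1,k)$ I would bound $k$ directly: averaging the conformal factor over the maximal compact $\SO(k)\subset\SO(1,k)$ yields an $\SO(k)$-invariant metric $\bar g\in[g]$, so $\SO(k)$ acts isometrically on the compact Lorentzian manifold $(M,\bar g)$. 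At any point the isotropy representation is faithful (an isometry fixing a point with trivial $1$-jet is the identity on a connected manifold) and compact, hence sits up to conjugacy in the maximal compact subgroup $\O(1)\times\O(n-1)$ of $\O(1,n-1)$; counting dimensions gives $\binom{k}{2}\le n+\binom{n-1}{2}$, an inequality that fails as soon as $k\ge n+1$. Thus $k\le n$.

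\emph{Rank-two case.} Now Theorem 1.5 of \cite{bader_frances_melnick} applies and shows $(M,g)$ is conformal to a quotient of $\widetilde{\Ein}^{1,n-1}$, so $G$ is, locally, a rank-$2$ semisimple subgroup without compact factor of $\SO(2,n)$ acting on $\R^{2,n}$ while preserving the ambient form. The task is to classify such subgroups. If $G$ is simple, decomposing $\R^{2,n}$ into $G$-invariant subspaces and tracking the induced signatures eliminates the rank-$2$ real forms incompatible with having only two negative directions (for instance $\SU(2,k)$, the split $G_2\hookrightarrow\SO(3,4)$, and $\SL(3,\R)$ on $\mathrm{Sym}^2$), leaving exactly $\SO(2,k)$, $2\le k\le n$, as a block action on $\R^{2,k}\subset\R^{2,n}$. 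If $G$ splits as a product of two rank-$1$ factors, the same signature bookkeeping rules out any $\SU(1,\cdot)$ factor (it would demand a third negative direction) and leaves $\SO(1,k)\times\SO(1,k')$ acting on $\R^{1,k}\oplus\R^{1,k'}$, whence $k+k'\le n$ in general; the low-dimensional isomorphism $\SO(1,2)\times\SO(1,2)\simeq\SO(2,2)\hookrightarrow\SO(2,3)$ produces the extra possibility in dimension $n=3$, which is precisely what $k+k'\le\max(n,4)$ records.

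\emph{Converse, and main obstacle.} For the converse I would exhibit each listed group inside $\PO(2,n)=\Conf(\Ein^{1,n-1})$ by the block embeddings above ($\SO(1,k)$ and $\SO(2,k)$ on $\R^{1,k}$ and $\R^{2,k}$; $\SU(1,k)$ through $\SO(2,2k)$; $\SO(1,k)\times\SO(1,k')$ through $\SO(2,k+k')$, together with $\SO(2,2)\hookrightarrow\SO(2,3)$ when $n=3$), and observe that $\Ein^{1,n-1}$ is a compact Lorentzian manifold carrying these conformal actions. I expect the genuine difficulty to lie in the rank-$2$ classification: converting ``rank-$2$ semisimple subgroup of $\SO(2,n)$ with no compact factor'' into the exact list requires a careful, signature-sensitive analysis of invariant subspaces to discard the remaining real forms, and the sharp exceptional bound $k+k'\le\max(n,4)$ coming from the small-dimensional isomorphisms is the point most prone to over- or under-counting.
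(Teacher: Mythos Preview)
Your overall architecture matches the paper's: Zimmer's rank bound, exclusion of $\Sp(1,k)$ and $F_4^{-20}$ factors via Theorems~\ref{thm:symplectic} and~\ref{thm:exceptional}, the rank-$2$ case via \cite{bader_frances_melnick} reducing to subgroups of $\SO(2,n)$, and then a dimension bound in the rank-$1$ case. The rank-$2$ subgroup classification is left as a sketch both in your proposal and in the paper, so there is nothing to compare there.

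Where you genuinely diverge is in the two rank-$1$ dimension bounds. For $\SU(1,k)$ you invoke Corollary~1 of \cite{article4} to place $G$ inside $\PO(2,n)$ and read off $2k\le n$; the paper explicitly avoids this external dependence and instead proves internally (Lemma~\ref{lem:signature_optimal}) that at any point $x$ furnished by Proposition~\ref{prop:virtual_isotropy} the $G$-orbit has dimension exactly $2k$, whence $2k\le n$. Your route is shorter but imports the full strength of \cite{article4}; the paper's route stays self-contained and reuses the same orbit analysis that drives the conformal-flatness arguments of Section~\ref{s:conformal_flatness}.

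For $\SO(1,k)$ the contrast is sharper. You average over the maximal compact $\SO(k)$ to get an isometric $\SO(k)$-action and then run the elementary dimension count
\[
\dim\SO(k)\le \dim(\text{orbit})+\dim(\text{isotropy})\le n+\binom{n-1}{2},
\]
which indeed fails for $k\ge n+1$. This is correct and pleasantly soft. The paper instead devotes Section~\ref{s:so1k} to classifying the possible $G$-orbits through points given by Proposition~\ref{prop:virtual_isotropy} (a fixed point, a conformal sphere $\S^{k-1}$, or a degenerate circle bundle over $\S^{k-1}$) and reads off $k\le n$ from each case. Your argument is more elementary and portable; the paper's buys structural information about minimal invariant sets (they are compact orbits of an explicit type), which fits its broader theme of describing the geometry near such orbits rather than merely bounding parameters.
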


In fact, this theorem can be directly deduced from the main result of \cite{article4}. However, the approach we suggest here is more direct and does not involve a lot of geometric considerations, and it might be easier to generalize it to other signatures or geometric structures (see below).

\subsection{Relations with CR geometry: The Fefferman fibration}

As consequences of Theorems \ref{thm:symplectic} and \ref{thm:exceptional}, we obtain results on automorphisms groups of CR structures, via a generalization of the Fefferman fibration due to \v Cap and Gover \cite{cap_gover08}. Consider $M^{2n+1}$ a compact manifold endowed with a partially integrable almost CR structure of hypersurface type, whose Levi form is non-degenerate of signature $(p,q)$, $p+q = n$, also equipped with an appropriate line bundle (see Section 2.3 of \cite{cap_gover08}, this is not restrictive in the embedded case). Then, Theorem 2.4 of \cite{cap_gover08} asserts that there is a natural circle bundle $\tilde{M} \rightarrow M$, and that $\tilde{M}$ carries a natural conformal class of pseudo-Riemannian metrics $[g]$ of signature $(2p+1,2q+1)$. The correspondence $M \mapsto (\tilde{M},[g])$ being fonctorial, the CR automorphisms group of $M$ lifts to a subgroup of $\Conf(\tilde{M},g)$. The Fefferman bundle being compact when $M$ is compact, we can deduce from Theorems \ref{thm:symplectic} and \ref{thm:exceptional}:

\begin{corollary}
Let $M$ be a compact manifold endowed with a partially integrable almost CR structure of hypersurface type, whose Levi form has signature $(p,q)$, and admitting a line bundle as above.

\begin{itemize}
\item If $\Sp(1,n)$, $n \geq 2$, acts by CR automorphisms on $M$, then $\min(p,q) \geq 1$ and if $\min(p,q) = 1$, then $M$ is flat as a CR structure.
\item If $F_4^{-20}$ acts by CR automorphisms on $M$, then $\min(p,q) \geq 4$.
\end{itemize}
\end{corollary}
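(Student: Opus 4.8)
The plan is to push the problem up to the Fefferman space and then read off Theorems~\ref{thm:symplectic} and~\ref{thm:exceptional}. Write $\dim M = 2m+1$, so that the Levi form of signature $(p,q)$ satisfies $p+q=m$, and assume without loss of generality $p \leq q$. By Theorem~2.4 of~\cite{cap_gover08}, the Fefferman circle bundle $\tilde M \to M$ carries a conformal class $[g]$ of signature $(2p+1, 2q+1)$; it is compact, since $M$ is compact and the fiber is $\S^1$, and its total dimension $2m+2 \geq 4$ satisfies the standing hypothesis $p+q \geq 3$ of both theorems. By the functoriality of the construction, a group $G$ acting on $M$ by CR automorphisms induces a conformal action on $(\tilde M, [g])$ of a group locally isomorphic to $G$ — possibly $G$ modulo a finite central subgroup — which is all that Theorems~\ref{thm:symplectic} and~\ref{thm:exceptional} require.

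With this reduction in hand, the signature bounds follow by arithmetic. If $G = \Sp(1,n)$ with $n \geq 2$, then Theorem~\ref{thm:symplectic} applied to $\tilde M$ gives $\min(2p+1, 2q+1) = 2p+1 \geq 3$, hence $\min(p,q) = p \geq 1$. If moreover $\min(p,q) = 1$, the Fefferman signature is $(3, 2q+1)$, so its index equals $3$ and the second assertion of Theorem~\ref{thm:symplectic} forces $(\tilde M, [g])$ to be conformally flat. If instead $G = F_4^{-20}$, then Theorem~\ref{thm:exceptional} gives $2p+1 = \min(2p+1, 2q+1) \geq 9$, whence $\min(p,q) = p \geq 4$. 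This already settles every inequality claimed in the corollary, and in the symplectic borderline case it reduces the flatness statement to transferring conformal flatness of $\tilde M$ back to the CR structure on $M$.

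The only step going beyond signature bookkeeping — and the one I expect to be the crux — is precisely this transfer. For it I would invoke the fact that the Fefferman construction of~\cite{cap_gover08} is a correspondence of parabolic geometries under which the normal conformal Cartan curvature of $\tilde M$ is induced by the normal CR Cartan curvature of $M$, so that the two curvatures vanish simultaneously; equivalently, $\tilde M$ is conformally flat if and only if $M$ is flat as a partially integrable almost CR structure. Granting this curvature-correspondence in the precise form needed, the conformal flatness of $\tilde M$ obtained above yields at once that $M$ is CR-flat, completing the proof. Everything else is a direct translation of the pseudo-Riemannian theorems through the doubling $(p,q) \mapsto (2p+1, 2q+1)$ of the signature, and the main care required is to quote the flatness equivalence correctly rather than to carry out any computation.
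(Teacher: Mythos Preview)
Your proposal is correct and follows exactly the approach the paper intends: the corollary is stated immediately after the Fefferman discussion with no written proof, so the ``paper's proof'' is precisely the reduction you carry out---lift the CR action to a conformal action on the compact Fefferman bundle of signature $(2p+1,2q+1)$, apply Theorems~\ref{thm:symplectic} and~\ref{thm:exceptional}, and read off the bounds. Your identification of the flatness transfer (normal Cartan curvature of the Fefferman conformal structure vanishes iff that of the underlying CR structure does, as in \v Cap--Gover) as the one nontrivial ingredient beyond signature arithmetic is exactly right, and the paper tacitly relies on the same fact.
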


\begin{remark}
In the first situation, the fact that $\min(p,q) \geq 1$ immediately follows from Schoen-Webster theorem. Flatness means that the corresponding Cartan geometry is flat, \textit{i.e.} the CR structure defines an atlas of $(G,\X)$-manifold on $M$, with $G = \PU(2,N)$ and $\X = \{|z_1|^2 + |z_2|^2 = |z_3|^2 + \cdots + |z_{N+2}|^2\} \subset \C P^{N+1}$.
\end{remark}

\begin{remark}
In the second case, we do not know if this lower bound is optimal.
\end{remark}

Combined with Theorem 1.5 of \cite{bader_frances_melnick}, we finally deduce an analogue of Theorem \ref{thm:classification_lorentz} for CR manifolds.

\begin{corollary}
Let $M^n$ be a compact manifold endowed with a partially integrable almost CR structure of hypersurface type, whose Levi form has Lorentzian signature, and admitting a line bundle as above. Assume that a semi-simple Lie group $G$ without compact factor acts on $M$ by CR automorphisms. Then,
\begin{itemize}
\item If $\Rk_{\R}(G) = 2$, then $G$ is locally isomorphic to a Lie subgroup of $\SU(2,n)$;
\item If $\Rk_{\R}(G) = 1$, then $G$ is isomorphic to some $\SO(1,k)$, $\SU(1,k)$ or $\Sp(1,k)$.
\end{itemize}
Conversely, every group in this list acts on a compact Levi-Lorentzian CR structure.
\end{corollary}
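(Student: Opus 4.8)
The plan is to run the proof of Theorem~\ref{thm:classification_lorentz} one geometry higher, replacing the conformal Cartan geometry by the CR one. A Levi-nondegenerate partially integrable almost CR structure of hypersurface type with Lorentzian Levi signature $(1,m)$ is, in dimension $\geq 5$, equivalent to a regular normal parabolic Cartan geometry whose flat model is the projectivized null-cone $X$ of a Hermitian form of signature $(2,m+1)$ on $\C^{m+2}$, with automorphism group $\PU(2,m+1)$. The integer governing the whole argument is $\Rk_\R \PU(2,m+1) = 2$, which plays here the role that $\Rk_\R \PO(2,n)=2$ played in the Lorentzian conformal setting. First I would bound the rank: since $G$ is semisimple without compact factor and acts by automorphisms of this Cartan geometry on a compact manifold, the embedding theorem of \cite{bader_frances_melnick} sends a maximal $\R$-split torus of $G$ into one of $\PU(2,m+1)$, so $\Rk_\R(G) \leq 2$. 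As a nontrivial semisimple group without compact factor has positive real rank, and a rank-$1$ such group is automatically simple (a product of two noncompact simple factors already has rank $\geq 2$), only the two cases of the statement survive.

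In the case $\Rk_\R(G)=2$ the rank is maximal for the model, and I would invoke the rigidity part, Theorem~1.5 of \cite{bader_frances_melnick}, exactly as in the conformal case: the Cartan geometry is then flat and complete, $M$ is a quotient of the universal cover of $X$, and the associated holonomy morphism embeds $G$ locally into $\PU(2,m+1)$. Hence $G$ is locally isomorphic to a Lie subgroup of $\SU(2,n)$, which is the first assertion.

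In the case $\Rk_\R(G)=1$ the group $G$ is one of the four rank-$1$ simple groups $\SO(1,k)$, $\SU(1,k)$, $\Sp(1,k)$, $F_4^{-20}$, and the only point is to discard $F_4^{-20}$: by the first Corollary above (itself deduced from Theorem~\ref{thm:exceptional} through the Fefferman fibration), a CR action of $F_4^{-20}$ forces $\min(p,q)\geq 4$ for the Levi form, which is incompatible with the Lorentzian value $\min(p,q)=1$. The three surviving groups give the second assertion. For the converse, each listed group acts on a flat model: the projectivized null-cone $X_N$ of a Hermitian form of signature $(2,N)$ on $\C^{N+2}$ is a compact CR manifold of Lorentzian Levi signature $(1,N-1)$ on which $\SU(2,N)$ acts by CR automorphisms, and restricting along the standard inclusions $\SO(1,k)\hookrightarrow \SU(1,k)\hookrightarrow \SU(2,k)$ and $\Sp(1,k)\hookrightarrow \SU(2,2k)$ --- the latter obtained by reading the quaternionic Hermitian form of signature $(1,k)$ on $\H^{1+k}$ as a complex Hermitian form of signature $(2,2k)$, the same embedding underlying $\Sp(1,k)\hookrightarrow \SO(4,4k)$ from Theorem~\ref{thm:symplectic} --- realizes every group of the list as acting on some $X_N$ of Lorentzian Levi signature.

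The main obstacle, and the only genuinely delicate point, is justifying that \cite{bader_frances_melnick} applies verbatim to the CR parabolic Cartan geometry rather than only to the conformal geometry for which it is quoted in the proof of Theorem~\ref{thm:classification_lorentz}, together with the correct identification of the model group as $\PU(2,m+1)$ and of its real rank. Everything new to the present paper enters only through the exclusion of $F_4^{-20}$; the remainder is index bookkeeping and the standard embeddings above.
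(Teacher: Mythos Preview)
Your proposal is correct and matches the paper's intended argument, which is only sketched in one line (``Combined with Theorem 1.5 of \cite{bader_frances_melnick}'') and relies on applying Bader--Frances--Melnick's results directly to the CR parabolic geometry modeled on $\PU(2,m+1)$, together with the first CR corollary to exclude $F_4^{-20}$ in rank $1$. Your self-identified obstacle is not a genuine one: the results of \cite{bader_frances_melnick} are stated for general parabolic Cartan geometries, not only conformal ones, so both the rank bound and the flatness/completeness statement for maximal-rank actions apply verbatim to the Levi-Lorentzian CR model.
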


\begin{remark}
Contrarily to the case of compact Lorentzian manifolds, we do not know if the CR structure has to be flat when a non-compact simple Lie group act on it.
\end{remark}

\subsection{Organization of the article}

The central idea of the main results is that when $G$ acts on a compact manifold whose index is optimal, then any compact minimal $G$-invariant subset is a compact orbit conformal to an Einstein Universe. This dynamical property is based on methods going back to the 1980's \cite{zimmer86,zimmer87,gromov88}, but which were mainly developed in the case of unimodular geometric structures. More recent works of Bader and Nevo \cite{bader_nevo} use these ideas in conformal geometry, and they were also developed in the framework of Cartan geometries in \cite{bader_frances_melnick}. 

We start Section \ref{s:background} by giving details on these properties. Then, we establish some general facts and observations about conformal actions in optimal index. Quickly, we will obtain lower bounds on the index of the metric (Proposition \ref{prop:lower_bound}). These bounds will be sharp except in the case of $F_4^{-20}$, which for technical reasons is more difficult to handle. However, this proposition will give the first half of Theorem \ref{thm:symplectic} and almost completely prove Theorem \ref{thm:classification_lorentz}. What will be left is a control of the size of the possible acting group in terms of dimension of the manifold, which will follow from Section \ref{s:so1k} and Lemma \ref{lem:signature_optimal}.

Section \ref{s:conformal_flatness} is devoted to geometric considerations in optimal index, and proves the second half of Theorem \ref{thm:symplectic}. Even if we are mainly interested in actions of $\Sp(1,k)$ in signature $(3,n)$, our proof literally applies to Lorentzian actions of $\SU(1,k)$, so we present both of them in a unified way - even if the case of $\SU(1,k)$ already follows from \cite{article4}.

At last, Section \ref{s:f4} determines the optimal index for actions of $F_4^{-20}$. Proposition \ref{prop:lower_bound} of Section \ref{s:background} implies that it is at least $7$, so we will be left to prove that there does not exist actions in index $7$ or $8$, and Theorem \ref{thm:exceptional} will be established.

\subsection*{Conventions and notations}

Throughout this paper, $M$ always denote a smooth compact manifold, with $\dim M \geq 3$. If $G$ is a Lie group acting by diffeomorphisms on $M$, then its Lie algebra is identified with a Lie subalgebra of $\mathfrak{X}(M)$ via $X \in \g \mapsto \left \{ \left . \frac{\d}{\d t} \right |_{t=0} e^{-tX}.x \right \}_{x \in M}$.

Particularly, given a conformal action of $G$ on a pseudo-Riemannian manifold $(M,g)$, we will implicitly identify $\g$ with a Lie algebra of conformal vector fields of $M$. If $V \subset \g$ is a vector subspace and $x \in M$, we will note $V(x) = \{X_x, \ X \in V\} \subset T_xM$ the ``evaluation'' of $V$ at $x$. For instance, $\g(x)$ will denote $T_x(G.x)$.

If $\g$ is semi-simple and $\lambda$ is a restricted root, the notation $X_{\lambda}$ will implicitly mean that $X_{\lambda}$ belongs to the restricted root-space of $\lambda$. 

\subsection*{Acknowledgements} \textit{This work was realized during a stay at the University of Maryland. I would like to thank the members of the Mathematics Department for their hospitality. I am especially grateful to Jeffrey Adams for his clarifications about representation theory.}

\section{Background and general observations}
\label{s:background}

Our main results rely essentially on a proposition characterizing some particular orbits in every compact invariant subset of the manifold. It is based on the framework of ideas developed by Zimmer in the case of actions of semi-simple groups on finite volume $G$-structures. We already used it in a restricted situation in \cite{article3,article4}. We start this section by giving a more general formulation.

\subsection{Identifying singular orbits}

Let $G$ be a semi-simple Lie group. Let $\g = \a \oplus \m \oplus \bigoplus_{\lambda \in \Delta} \g_{\lambda}$ be the restricted root-space decomposition with respect to a Cartan decomposition $\g = \k \oplus \p$, $\a$ being a maximal Abelian subspace in $\p$ and $\m$ the centralizer of $\a$ in $\k$. We will always note $\theta$ the corresponding Cartan involution. Fix an ordering on the set of restricted roots $\Delta$ and denote by $S$ the connected Lie subgroup of $G$ whose Lie algebra is
\begin{equation*}
\s = \a \oplus \bigoplus_{\substack{\lambda \in \Delta \\ \lambda > 0}} \g_{\lambda}.
\end{equation*}

Let $G$ act conformally on a pseudo-Riemannian manifold $(M,g)$ of dimension at least $3$. If $x \in M$, we note $\g_x = \{X \in \g \ | \ X(x) = 0\}$ the Lie algebra of the stabilizer of $x$. Differentiating the orbital map $G \rightarrow G.x$, we obtain a natural identification $T_x(G.x) \simeq \g / \g_x$, so that $\g / \g_x$ inherits a quadratic form $q_x$ from the ambient metric $g_x$.

As it is done in \cite{article3}, the following proposition can be derived from a Cartan geometry version of Zimmer's embedding theorem, which is formulated in \cite{bader_frances_melnick}. In fact, it is almost stated in \cite{bader_nevo}, and can be recovered by considering a natural $G$-equivariant map $M \rightarrow \Gr(\g) \times \mathbb{P}(\g^* \otimes \g^*)$, and applying a version of Borel's density Theorem (see \cite{bader_nevo}, proof of Theorem 1). The important point is that the Zariski closure of $\Ad_{\g}(S)$ in $\GL(\g)$ does not admit proper algebraic cocompact subgroup.

\begin{proposition}
\label{prop:virtual_isotropy}
Assume that $S$ preserves a probability measure $\mu$ on $M$. Then, for $\mu$-almost every $x \in M$, we have
\begin{enumerate}
\item $\Ad(S) \g_x \subset \g_x$ ;
\item The induced action of $\Ad(S)$ on $\g / \g_x$ is conformal with respect to $q_x$.
\end{enumerate}
\end{proposition}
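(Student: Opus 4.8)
The plan is to recast conditions (1) and (2) as a single fixed-point statement for an algebraic action of $S$, and then to derive that fixed points carry full measure via a Borel-density argument. First I would encode the infinitesimal data attached to a point as a point of an algebraic variety. Define a map
$$\Phi : M \longrightarrow \Gr(\g) \times \mathbb{P}(\g^* \otimes \g^*), \qquad x \longmapsto \bigl(\g_x,\ [\hat q_x]\bigr),$$
where $\hat q_x \in \g^* \otimes \g^*$ is the pullback to $\g$ of the induced form $q_x$ on $\g/\g_x \simeq T_x(G.x)$ (so $\g_x$ lies in the kernel of $\hat q_x$), and $[\hat q_x]$ is its projective class, which is well defined precisely because $g$ is only prescribed up to a conformal factor. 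Since $\dim \g_x$ is only semicontinuous, $\Phi$ is a priori defined and measurable after restricting to the $G$-invariant sets on which the orbit dimension is locally constant; it is enough to run the argument on each such stratum, where $\g_x$ takes values in a single Grassmannian.

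Next I would check $G$-equivariance. Stabilizer algebras transform by $\g_{g.x} = \Ad(g)\g_x$, and because $G$ acts conformally, $g_*$ sends $q_x$ to a positive multiple of $q_{g.x}$; hence $\Phi(g.x) = g \cdot \Phi(x)$, where $G$ acts on $\Gr(\g)$ through $\Ad$ and on $\mathbb{P}(\g^* \otimes \g^*)$ through the contragredient of $\Ad \otimes \Ad$. In particular $\Phi$ is $S$-equivariant, so $\nu := \Phi_* \mu$ is an $\Ad(S)$-invariant probability measure on the real algebraic variety $W = \Gr(\g) \times \mathbb{P}(\g^* \otimes \g^*)$, on which $\Ad(S)$ acts, as a subgroup of $\GL(\g)$, algebraically.

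Now I would invoke the measure-theoretic form of the Borel density theorem. Let $H = \overline{\Ad(S)}^{\mathrm{Zar}}$ be the Zariski closure of $\Ad(S)$ in $\GL(\g)$. As noted in the discussion preceding the statement, $H$ is a split solvable group — the product of the $\R$-split torus $\Ad(A)$ and the unipotent group $\Ad(N)$ — and therefore admits no proper algebraic cocompact subgroup. Consequently every $H$-invariant probability measure on an algebraic $H$-variety is supported on the fixed-point set: the stabilizer of $\nu$-a.e. point contains a cocompact algebraic subgroup, which here must be all of $H$. Hence $\nu\bigl(W^{\Ad(S)}\bigr) = 1$, i.e. for $\mu$-a.e. $x$ the point $\Phi(x)$ is fixed by $\Ad(S)$. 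Fixedness in the Grassmannian factor reads $\Ad(S)\g_x = \g_x$, which is (1); fixedness in the projective factor says that the line $\R\,\hat q_x$ is $\Ad(S)$-stable, i.e. $\Ad(S)$ preserves the conformal class $[q_x]$ on $\g/\g_x$, which is exactly (2).

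The main obstacle is the Borel-density input rather than the bookkeeping. One must ensure that the target is genuinely a real algebraic variety and the $S$-action genuinely algebraic — which is why one works stratum by stratum, so that $\g_x$ stays in a fixed Grassmannian — and, above all, one must justify in the correct form the assertion that an invariant probability measure for a group with no proper cocompact algebraic subgroup concentrates on fixed points. This is the step where the structure of $S = AN$ enters essentially, through the expanding dynamics of a regular element of $\a$ on the positive root spaces $\g_\lambda$; by comparison the equivariance and measurability verifications are routine.
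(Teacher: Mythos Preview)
Your proposal is correct and follows essentially the same route the paper sketches: define the $G$-equivariant map $M \to \Gr(\g) \times \mathbb{P}(\g^*\otimes\g^*)$, $x \mapsto (\g_x,[\hat q_x])$, push forward the $S$-invariant measure, and conclude via Borel density from the fact that the Zariski closure of $\Ad(S)$ in $\GL(\g)$ admits no proper algebraic cocompact subgroup. The only addition in the paper is the remark that the same conclusion can alternatively be extracted from the Cartan-geometric version of Zimmer's embedding theorem.
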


\begin{remark}
Note that the adjoint action of the stabilizer $G_x$ on $\g / \g_x$ is conjugated to its isotropy representation on $T_x(G.x)$. So, for any point $x$, $\Ad(G_x)$ acts on $\g / \g_x$ by preserving the conformal class $[q_x]$. Thus, this proposition gives conditions ensuring the existence of points where $S$ is ``virtually'' in the isotropy representation.
\end{remark}

\begin{remark}
\label{rem:orbit_closure}
Since $S$ is amenable, the existence of $\mu$ is guaranteed by the one of a compact $S$-invariant subset in $M$. It will be automatic when $M$ is compact. Notably, for every $x \in M$, there will exist $x_0 \in \bar{G.x}$ in which the conclusions of the proposition are valid.
\end{remark}

\subsection{$\Ad(S)$-invariant subalgebras of $\g$}

Our work consists in analyzing orbits of points given by Proposition \ref{prop:virtual_isotropy}. Let us first see what can be said about their isotropy.

\vspace{.2cm}

If $\h \subset \g$ is a vector subspace such that $\ad(\a) \h \subset \h$, then $\h = (\h \cap \g_0) \oplus \bigoplus_{\lambda \in \Delta} (\h \cap \g_{\lambda})$. This simply follows from the existence of $H \in \a$ such that the $\lambda(H), \ \lambda \in \Delta$ are non-zero and pairwise distinct. The next lemma exploits the invariance by the adjoint action of positive root-spaces and will be the starting point of our analyze.

\begin{lemma}
\label{lem:transversality}
Let $\lambda \in \Delta$ be such that $2\lambda \notin \Delta$ and let $\h$ be a subalgebra of $\g$ which is $\ad(\g_{\lambda})$-invariant. Then, 
\begin{itemize}
\item Either $\h \cap \g_{-\lambda}= 0$,
\item Or $\g_{-\lambda} \subset \h$.
\end{itemize}
\end{lemma}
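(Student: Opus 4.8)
The plan is to prove the dichotomy by establishing its second alternative whenever the first fails. Suppose then that $\h \cap \g_{-\lambda} \neq 0$ and fix a nonzero $Y \in \h \cap \g_{-\lambda}$; I will show $\g_{-\lambda} \subset \h$. The first thing to record is that the hypothesis $2\lambda \notin \Delta$ forces $\g_{\pm 2\lambda} = 0$, so that $\g_\lambda$ and $\g_{-\lambda}$ are both abelian and in particular $[Y, \g_{-\lambda}] \subset \g_{-2\lambda} = 0$. Now I would use the two assumptions on $\h$ at once: for any $X \in \g_\lambda$ the $\ad(\g_\lambda)$-invariance gives $[X,Y] \in \h$, and since $[X,Y] \in \g_0$ while $\h$ is a subalgebra containing $Y$, a second bracket gives $[[X,Y],Y] \in \h \cap \g_{-\lambda}$. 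Because $[[X,Y],Y] = (\ad Y)^2 X$, this shows that the whole image of $(\ad Y)^2 \colon \g_\lambda \to \g_{-\lambda}$ lies in $\h$. Everything is thus reduced to the purely Lie-theoretic claim that this operator is \emph{onto}.

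To prove surjectivity I would build an $\sl_2$-triple adapted to $Y$. The key point is that $H_0 := [\theta Y, Y]$ lies in $\a$: one has $\theta(H_0) = [Y,\theta Y] = -H_0$, and $\a$ is exactly the $(-1)$-eigenspace of $\theta$ in $\g_0 = \a \oplus \m$. A short computation with the Killing form shows moreover that $H_0$ is a nonzero multiple of the element $H_\lambda \in \a$ dual to $\lambda$, so that after rescaling $(e,h,f) := (\theta Y,\, \alpha H_0,\, \alpha Y)$ is a genuine $\sl_2$-triple with $\lambda(h) = 2$. Relative to $\ad(h)$, every vector of $\g_\lambda$ has weight $+2$ and, $\g_\lambda$ being abelian, is annihilated by $\ad(e) = \ad(\theta Y)$; hence each nonzero $X \in \g_\lambda$ is a highest-weight vector of weight $2$, sitting in a sum of copies of the $3$-dimensional irreducible $\sl_2$-module. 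On the highest-weight space of such a module $\ad(f)^2$ — a nonzero multiple of $(\ad Y)^2$ — is injective, landing in the weight $-2$ space, i.e. in $\g_{-\lambda}$. This proves that $(\ad Y)^2$ is injective on $\g_\lambda$; since $\theta$ identifies $\g_\lambda$ with $\g_{-\lambda}$ they have the same dimension, so injectivity gives $(\ad Y)^2 \g_\lambda = \g_{-\lambda}$, whence $\g_{-\lambda} \subset \h$.

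The step I expect to require the most care is the construction of the triple, precisely because $[\theta Y, Y]$ could a priori carry a component in $\m$, which would destroy the clean weight grading; the observation that this bracket is $\theta$-anti-invariant and therefore lies in $\a$ is what makes the representation-theoretic argument go through, and it is here that the full strength of $Y \in \g_{-\lambda}$ (rather than an arbitrary element of $\g_0 \oplus \g_{-\lambda}$) is used. The hypothesis $2\lambda \notin \Delta$ enters twice and crucially: it makes $\g_\lambda$ abelian, so that its elements are highest-weight vectors killed by $e$, and it gives $[Y,\g_{-\lambda}] = 0$, so that the $\sl_2$-strings meeting $\g_{-\lambda}$ have length exactly three — which is exactly what forces $\ad(f)^2$ to be nonzero on each irreducible constituent.
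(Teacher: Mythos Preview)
Your argument is correct and follows the same overall arc as the paper's proof: fix a nonzero element of $\h\cap\g_{-\lambda}$, use $\ad(\g_\lambda)$-invariance together with the subalgebra property to get $(\ad Y)^2\g_\lambda\subset\h$, and then show this image is all of $\g_{-\lambda}$. The difference lies only in how that last surjectivity is established. The paper proves the explicit identity
\[
[[X,\theta Y],X]=2|\lambda|^2 B_\theta(X,Y)\,X - |\lambda|^2 B_\theta(X,X)\,Y \qquad (X,Y\in\g_{-\lambda}),
\]
from which one reads off immediately that any $Y\in\g_{-\lambda}$ lies in $\Span\{X,[[X,\theta Y],X]\}\subset\h$; no representation theory is invoked. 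You instead build the $\sl_2$-triple $(\theta Y,\alpha H_0,\alpha Y)$ and use complete reducibility to see that every nonzero vector of $\g_\lambda$ is a highest-weight vector of weight $2$, whence $(\ad Y)^2$ is injective and therefore bijective from $\g_\lambda$ to $\g_{-\lambda}$. Your observation that $H_0=[\theta Y,Y]$ is $\theta$-anti-invariant and hence lies in $\a$ is exactly the computation the paper carries out when identifying the $\a$-component of $[X,\theta Y]$ as $-B_\theta(X,Y)H_\lambda$ (specialized to $X=Y$). So the two proofs share the same skeleton; the paper's explicit formula is more self-contained and gives a quantitative statement, while your $\sl_2$ argument is more structural and makes transparent why the hypothesis $2\lambda\notin\Delta$ is exactly what forces the relevant $\sl_2$-strings to have length three.
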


\begin{proof}
This lemma is based on the following formula:
\begin{equation*}
\forall X,Y \in \g_{-\lambda}, \ [[X,\theta Y],X] = 2|\lambda|^2 B_{\theta}(X,Y)X - |\lambda|^2 B_{\theta}(X,X)Y,
\end{equation*}
where as usual, $B$ is the Killing form of $\g$, $B_{\theta}$ is the positive definite quadratic form $-B(\theta X, Y)$ and $|.|$ refers to the Euclidean structure on $\a^*$ provided by $B$. The proof of this fact is inspired from \cite{knapp}, Proposition 6.52(a) The idea is here to decompose $[X,\theta Y] \in \g_0$ according to $\g_0 = \a \oplus \m$. It is simply:
\begin{equation*}
[X,\theta Y] = \underbrace{\frac{1}{2}([X,\theta Y] - [\theta X, Y])}_{\in \a} + \underbrace{\frac{1}{2}([X,\theta Y] + [\theta X , Y])}_{\in \m}.
\end{equation*}
The component on $\a$ can be determined explicitly. We can compute that for all $H \in \a$,
\begin{equation*}
[[X,\theta Y] - [\theta X, Y],H] = B(X,[\theta Y,H]) + B(Y,[\theta X, H]) = -2\lambda(H) B_{\theta}(X,Y).
\end{equation*}
Consequently, $[X,\theta Y] - [\theta X, Y] = -2B_{\theta}(X,Y) H_{\lambda}$ where $H_{\lambda} \leftrightarrow \lambda$ under the dual identification given by the Killing form on $\a$. Now, let us note $Z = [[X,\theta Y],X]$. We have:
\begin{align*}
2Z & = [-2B_{\theta}(X,Y) H_{\lambda},X] + [[X,\theta Y],X] + [[\theta X, Y],X] \\
   & = 2B_{\theta}(X,Y) \lambda(H_{\lambda})X + Z + [[\theta X, Y],X].
\end{align*}
For the last term of the sum, since $[\g_{-\lambda},\g_{-\lambda}] = 0$, the Jacobi relation gives $[[\theta X, Y],X] = [[\theta X,X],Y]$. Since we have $[\theta X,X] = B_{\theta}(X,X) H_{\lambda}$ (take $X=Y$ in the previous computation), we finally obtain
\begin{equation*}
Z = 2|\lambda|^2 B_{\theta}(X,Y)X - |\lambda|^2 B_{\theta}(X,X)Y.
\end{equation*}

Now, the Lemma is immediate: if there exists a non-trivial $X \in \h \cap \g_{-\lambda}$, then for all $Y \in \g_{-\lambda}$, $[X,\theta Y] \in \h$ since $\theta Y \in \g_{\lambda}$. So, $[[X,\theta Y],X] \in \h$ and by the formula we have proved, we obtain $Y \in \h$ since $B_{\theta}(X,X) > 0$.
\end{proof}

\subsection{General observations for rank $1$ conformal actions in optimal index}
\label{ss:general_observations}

From now on, we restrict our attention to conformal actions of rank $1$ Lie groups. We collect here arguments and observations that will be used several times in the study of conformal actions of Lie groups of type $(BC)_1$.

\vspace{.2cm}

In this section, $\g$ is a Lie algebra isomorphic to either $\su(1,k)$, $\sp(1,k)$ or $\mathfrak{f}_4^{-20}$ (with $k \geq 2$). Let $(M,g)$ be a compact pseudo-Riemannian manifold of signature $(p,q)$ on which a Lie group $G$ with Lie algebra $\g$ acts conformally, with $p+q \geq 3$. We assume that $\min(p,q)$ is minimal with this property, \textit{i.e.} that there does not exist $(N,h)$ compact of signature $(p',q')$ with $\min(p',q') < \min(p,q)$ on which $G$ acts conformally.

\begin{observation}
\label{obs:fixed_points}
The group $G$ acts without fixed point and the action is essential, \textit{i.e.} $\forall g' \in [g]$, $G \nsubseteq \Isom(M,g')$. 
\end{observation}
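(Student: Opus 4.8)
The plan is to prove both assertions by contradiction, in each case producing a pseudo-Riemannian manifold of strictly smaller index that still admits a conformal $G$-action, thereby violating the minimality of $\min(p,q)$.

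\textbf{No fixed point.} Suppose $G$ fixes a point $x_0 \in M$. Then $\g$ embeds into the isotropy representation, i.e. the adjoint action of $G_{x_0} = G$ on $\g/\g_{x_0} \simeq T_{x_0}M$ preserves the conformal class $[q_{x_0}]$. Since the action is by a \emph{simple} group and $\g_{x_0} = \g$ forces $\g(x_0) = 0$, the linear isotropy gives a conformal (hence, after rescaling, orthogonal up to scale) representation of $G$ on $T_{x_0}M \simeq \R^{p+q}$. I would then invoke the structure of $G$: a simple Lie group of rank $1$ with Lie algebra $\su(1,k)$, $\sp(1,k)$ or $\f_4^{-20}$ is noncompact, and its image in $\CO(p,q)$ would be a noncompact subgroup fixing a point, contradicting either the compactness forced by preserving a Riemannian-type structure on the isotropy or — more robustly — by producing an invariant metric. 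The cleanest route is this: if $G$ fixed $x_0$, the isotropy representation $G \to \CO(p,q)$ together with the conformal factor gives a homomorphism whose linear part is bounded precisely when the metric can be renormalized to be $G$-invariant near $x_0$; a simple noncompact $G$ admits no nontrivial such bounded orthogonal representation of the relevant low dimension, so one is forced into a lower-index situation, contradicting minimality. I expect this first part to be short.

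\textbf{Essentiality.} Suppose instead the action is inessential, so there exists $g' \in [g]$ with $G \subset \Isom(M,g')$. The idea is that $G$ then preserves a genuine pseudo-Riemannian metric $g'$ of signature $(p,q)$ on the compact manifold $M$. A noncompact simple Lie group acting isometrically on a compact pseudo-Riemannian manifold is strongly constrained: by Zimmer-type results (and the finite invariant volume coming from $g'$), $\Ad(S)$ preserving the induced volume lets me apply Proposition \ref{prop:virtual_isotropy} with the added rigidity that $q_x$ is now genuinely $G_x$-invariant (not merely conformally so). I would argue that the isometric hypothesis allows one to drop the conformal dimension and exhibit the action on a lower-signature model — for instance, the isometry group of a compact manifold of signature $(p,q)$ with $G$ of rank $1$ noncompact simple type must, by the index bounds to be established in Proposition \ref{prop:lower_bound}, already occur in a strictly smaller index (the isometric constraint is stronger than the conformal one, forcing a smaller minimal index), again contradicting minimality of $\min(p,q)$.

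\textbf{Main obstacle.} The delicate point is that Observation \ref{obs:fixed_points} is invoked \emph{before} Proposition \ref{prop:lower_bound} and the main theorems, so I cannot use the sharp index bounds circularly. The honest argument must therefore rest only on the general representation-theoretic incompatibility of a noncompact simple $G$ with a bounded (compact-image) conformal isotropy at a fixed point, and on the classical fact that a noncompact simple Lie group cannot act isometrically and nontrivially on a compact pseudo-Riemannian manifold in the optimal-index regime without lowering the index. I expect the essentiality half to be the harder one, since ruling out isometric actions genuinely uses the amenability of $S$, the existence of the invariant volume from $g'$, and Proposition \ref{prop:virtual_isotropy} to reduce the signature; the fixed-point half follows from elementary boundedness of the isotropy representation of a simple noncompact group.
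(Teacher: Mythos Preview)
Your overall strategy---derive a contradiction with the minimality of $\min(p,q)$---matches the paper, but the mechanism you propose is not the right one, and both halves have real gaps.

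\textbf{Fixed point.} Your argument via ``bounded'' or ``compact-image'' isotropy is incorrect: there is no reason the isotropy representation $G \to \CO(T_{x_0}M)$ should have bounded image, and indeed $\SO(p,q)$ is noncompact for $p,q \geq 1$, so a noncompact subgroup there is no contradiction. The actual argument is much more direct. Since $G$ is simple and connected, the scaling character $G \to \R^+$ is trivial, so the isotropy lands in $\O(p,q)$. One then needs faithfulness: the paper invokes Thurston's stability theorem together with rigidity of conformal maps (a conformal diffeomorphism trivial on an open set is globally trivial) to get an embedding $G \hookrightarrow \SO(p,q)$. Now comes the step you never articulate: an embedding $\g \hookrightarrow \so(p,q)$ yields a linear action of $G$ on $\R^{p,q}$ preserving the quadratic form, hence a conformal action on $\Ein^{p-1,q-1}$, which is compact of index $\min(p,q)-1$. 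That is the contradiction with minimality.

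\textbf{Essentiality.} You reach for Proposition~\ref{prop:virtual_isotropy} and worry about circularity with Proposition~\ref{prop:lower_bound}, but neither is needed. If $G$ preserves some $g' \in [g]$, then $G$ acts isometrically on a compact finite-volume pseudo-Riemannian manifold of signature $(p,q)$, and Zimmer's embedding theorem (\cite{zimmer86}, Theorem~A) gives directly $\g \hookrightarrow \so(p,q)$. Then the same Einstein-universe trick as above produces a conformal $G$-action in index $\min(p,q)-1$. There is no circularity: the argument uses only the definition of minimality and the existence of $\Ein^{p-1,q-1}$, not any of the later structural results.

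In short, the missing idea in both parts is that an embedding $\g \hookrightarrow \so(p,q)$ immediately manufactures a conformal action on a compact model of strictly smaller index; once you see this, the proof is two lines per case.
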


\begin{proof}
If $x$ was a fixed point of $G$, then the isotropy representation $G \rightarrow \GL(T_x M)$ would be orthogonal with respect to $g_x$. By Thurston's stability theorem (see for instance Section 2 of \cite{cairns_ghys}), this representation would be faithful unless $G$ acted trivially on a neighborhood of $x_0$. By rigidity, a conformal map that acts trivially on an open subset is trivial. So, the isotropy representation would provide an embedding $G \hookrightarrow SO(p,q)$. Therefore, $G$ would also act locally faithfully on the Einstein space $\Ein^{p-1,q-1}$, contradicting the minimality of $\min(p,q)$.

If the action was inessential, then $G$ would act isometrically with respect to a metric of finite volume and signature $(p,q)$. Zimmer's embedding theorem (\cite{zimmer86}, Theorem A) would then yield an inclusion $\g \hookrightarrow \so(p,q)$, leading to the same contradiction as above.
\end{proof}

In any event, the restricted root-space decomposition of $\g$ has the form
\begin{equation*}
\g = \g_{-2\alpha} \oplus \g_{-\alpha} \oplus \g_0 \oplus \g_{\alpha} \oplus \g_{2\alpha}.
\end{equation*}
Denote by $\theta$ the Cartan involution adapted to this decomposition and $\g = \k \oplus \p$ the Cartan decomposition. We note $\g_0 = \a \oplus \m$ with $\a= \g_0 \cap \p$ the corresponding Cartan subspace and $\m  = \g_0 \cap \k$ the centralizer of $\a$ in $\k$. Except in the case of $\f_4^{-20}$, the compact Lie algebra $\m = \z_{\k}(\a)$ splits into $\m^1 \oplus \m^2$ and $\m^1 = \k \cap [\g_{-2\alpha},\g_{2\alpha}]$.
\begin{equation*}
\begin{array}{|c|c|c|c|c|c|}
\hline 
\g  &  \m & \m^1 & \m^2 & \dim \g_{\pm \alpha} & \dim \g_{\pm 2\alpha} \\
\hline
\su(1,k) & \m^1 \oplus \m^2 & \mathfrak{u}(1) & \su(k-1) & 2(k-1) & 1 \\
\hline
\sp(1,k) & \m^1 \oplus \m^2 & \sp(1) & \sp(k-1) & 4(k-1) & 3 \\
\hline 
\f_4^{-20} & \so(7) & & & 8 & 7 \\
\hline
\end{array}
\end{equation*}

\vspace{.2cm}

Let $\s$ be the Lie subalgebra $\s = \a \oplus \g_{\alpha} \oplus \g_{2\alpha}$ and $S < G$ be the corresponding Lie subgroup. Let $x \in M$ be a point in which the conclusions of Proposition \ref{prop:virtual_isotropy} are true.

\vspace{.2cm}

By Observation \ref{obs:fixed_points}, $x$ cannot be fixed by all the elements of $G$ and $\g_{x}$ is a proper Lie subalgebra of $\g$, which is $\Ad(S)$-invariant. Recall that
\begin{equation*}
\g_{x} = \bigoplus_{\lambda \in \{0, \pm \alpha, \pm 2\alpha\}} (\g_{x} \cap \g_{\lambda}).
\end{equation*}
The important starting point of the analysis of the orbit of $x$ is the following

\begin{observation}
\label{obs:transversality}
We have $\g_{x} \cap \g_{-2\alpha}=0$. 
\end{observation}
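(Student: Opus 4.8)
The statement is purely algebraic: it concerns only the subalgebra $\g_x$ and does not involve the metric, so the conformality part of Proposition \ref{prop:virtual_isotropy} is not needed. The plan is to apply the transversality lemma to $\lambda = 2\alpha$ and then to exclude the second alternative it produces, using the absence of fixed points.

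First, recall from Proposition \ref{prop:virtual_isotropy} and Observation \ref{obs:fixed_points} that $\g_x$ is a \emph{proper} subalgebra of $\g$ which is $\Ad(S)$-invariant; since $S$ is connected with Lie algebra $\s = \a \oplus \g_\alpha \oplus \g_{2\alpha}$, this means precisely that $\g_x$ is a subalgebra invariant under $\ad(\a)$, $\ad(\g_\alpha)$ and $\ad(\g_{2\alpha})$. As the only restricted roots are $\pm\alpha$ and $\pm 2\alpha$, we have $2(2\alpha) = 4\alpha \notin \Delta$, so Lemma \ref{lem:transversality} applies with $\lambda = 2\alpha$ to the $\ad(\g_{2\alpha})$-invariant subalgebra $\g_x$. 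It yields the dichotomy: either $\g_x \cap \g_{-2\alpha} = 0$, which is the desired conclusion, or $\g_{-2\alpha} \subset \g_x$.

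It remains to rule out the second case. Assume $\g_{-2\alpha} \subset \g_x$. By $\ad(\g_\alpha)$-invariance we get $[\g_\alpha, \g_{-2\alpha}] \subset \g_x$, and then by $\ad(\g_{2\alpha})$-invariance $[\g_{2\alpha}, [\g_\alpha, \g_{-2\alpha}]] \subset \g_x$. The structural input is that these brackets are as large as possible, namely $[\g_\alpha, \g_{-2\alpha}] = \g_{-\alpha}$ and $[\g_{2\alpha}, \g_{-\alpha}] = \g_\alpha$. Each is a bracket of two $\m$-modules, hence $\ad(\m)$-invariant, and in every row of the table $\g_{\pm\alpha}$ is an irreducible $\m$-module (the standard complex, the quaternionic, or the $8$-dimensional spin representation); so it is enough to check that these brackets are nonzero, which follows from the Heisenberg relation $[\g_{-\alpha}, \g_{-\alpha}] = \g_{-2\alpha}$ together with the Jacobi identity. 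Thus $\g_{-\alpha} \subset \g_x$ and then $\g_\alpha \subset \g_x$.

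Once $\g_{\pm\alpha} \subset \g_x$, the fact that $\g_x$ is a subalgebra forces $[\g_\alpha, \g_\alpha] = \g_{2\alpha} \subset \g_x$ and $[\g_\alpha,\g_{-\alpha}] + [\g_{2\alpha},\g_{-2\alpha}] \subset \g_x$. For each of $\su(1,k)$, $\sp(1,k)$, $\f_4^{-20}$ one has $\g_0 = \a \oplus \m = [\g_\alpha,\g_{-\alpha}] + [\g_{2\alpha},\g_{-2\alpha}]$: for $\f_4^{-20}$ already $[\g_\alpha,\g_{-\alpha}] = \g_0$, while otherwise $\m^1 = \k \cap [\g_{-2\alpha},\g_{2\alpha}]$ and $\m^2 = \k \cap [\g_\alpha,\g_{-\alpha}]$ exhaust $\m$. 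Hence $\g_0 \subset \g_x$ and therefore $\g_x = \g$, contradicting Observation \ref{obs:fixed_points}. So the second alternative is impossible and $\g_x \cap \g_{-2\alpha} = 0$. The only genuinely computational points are the two surjectivity statements for the brackets and the identity $\g_0 = [\g_\alpha,\g_{-\alpha}] + [\g_{2\alpha},\g_{-2\alpha}]$; I expect these to be the main obstacle, and they are checked case by case from the $\m$-module structure recorded in the table.
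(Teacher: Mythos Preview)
Your proof is correct and uses the same strategy as the paper: apply Lemma~\ref{lem:transversality} with $\lambda=2\alpha$, then show that the alternative $\g_{-2\alpha}\subset\g_x$ forces $\g_x=\g$, contradicting Observation~\ref{obs:fixed_points}. The paper's bootstrap is a bit shorter: from $\g_{-2\alpha}\subset\g_x$ and $\ad(\g_{2\alpha})$-invariance it first extracts $\a\subset[\g_{2\alpha},\g_{-2\alpha}]\subset\g_x$; then $\ad(\s)$-invariance alone gives $\g_\alpha\oplus\g_{2\alpha}=[\s,\a]\subset\g_x$ with no appeal to $\m$-irreducibility; finally $\g_{-\alpha}=[\g_\alpha,\g_{-2\alpha}]\subset\g_x$ since $\g_x$ is a subalgebra. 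One small inaccuracy in your write-up: the identity $\m^2=\k\cap[\g_\alpha,\g_{-\alpha}]$ is not correct---in all three cases every simple or abelian factor of $\m$ acts non-trivially on $\g_{-\alpha}$, so in fact $\k\cap[\g_\alpha,\g_{-\alpha}]=\m$---but this only strengthens the inclusion you need.
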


\begin{proof}
Indeed, if not we would have $\g_{-2\alpha} \subset \g_{x}$ by Lemma \ref{lem:transversality}. Since $\a \subset [\g_{2\alpha},\g_{-2\alpha}]$, we would get $\a \subset \g_{x}$, and it would follow that $\g_{\alpha} \oplus \g_{2\alpha} \subset \g_{x}$. Since $\g_{-\alpha} = [\g_{\alpha},\g_{-2\alpha}]$, we would obtain $\g = \g_{x}$ contradicting Observation \ref{obs:fixed_points}.
\end{proof}

Let $H \in \a$ be such that $\alpha(H) = 1$ and $h^t$ the adjoint action of $\Ad(e^{tH})$ on $\g / \g_{x}$, which is conformal with respect to $q_{x}$. Recall that we note $\pi_{x} : \g \rightarrow \g / \g_{x}$ the natural projection. The action of $h^t$ on $\g / \g_{x}$ easily gives a lot of orthogonality relations.

\begin{observation}
\label{obs:orthogonality}
Assume that we have $\lambda, \mu \in \{0, \pm \alpha, \pm 2\alpha\}$ and $X_{\lambda} \in \g_{\lambda}$, $X_{\mu} \in \g_{\mu}$ such that $b_{x}(\pi_{x}(X_{\lambda}), \pi_{x}(X_{\mu})) \neq 0$. Then, for any $\lambda', \mu'$ such that $\lambda'(H) + \mu'(H) \neq \lambda(H) + \mu(H)$, the spaces $\pi_{x}(\g_{\lambda'})$ and $\pi_{x}(\g_{\mu'})$ are orthogonal with respect to $b_{x}$.
\end{observation}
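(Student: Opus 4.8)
The plan is to exploit that $h^t$ acts diagonally with respect to the restricted root-space grading of $\g/\g_{x}$, so that conformality---which provides a \emph{single} multiplicative factor on all of $\g/\g_{x}$---rigidly constrains the pairings under $b_{x}$.

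First I would record the eigenvalue structure. Since $H \in \a \subset \s$, the subalgebra $\g_{x}$ is $\ad(H)$-invariant, hence splits as $\g_{x} = \bigoplus_{\lambda} (\g_{x} \cap \g_{\lambda})$ by the decomposition recalled at the beginning of Section~\ref{ss:general_observations}. Therefore $\pi_{x}$ identifies $\pi_{x}(\g_{\lambda})$ with $\g_{\lambda}/(\g_{x} \cap \g_{\lambda})$, and $h^t = \Ad(e^{tH})$ acts on this subspace as the scalar $e^{t\lambda(H)}$.

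Next I would determine the conformal factor. Conformality of $h^t$ means there is a scalar $\varphi(t) \neq 0$ with $b_{x}(h^t u, h^t v) = \varphi(t)\, b_{x}(u,v)$ for all $u,v \in \g/\g_{x}$. Taking $u = \pi_{x}(X_{\lambda})$ and $v = \pi_{x}(X_{\mu})$, the eigenvalue computation gives $e^{t(\lambda(H)+\mu(H))}\, b_{x}(\pi_{x}(X_{\lambda}),\pi_{x}(X_{\mu})) = \varphi(t)\, b_{x}(\pi_{x}(X_{\lambda}),\pi_{x}(X_{\mu}))$. As the pairing is nonzero by hypothesis, this forces $\varphi(t) = e^{t(\lambda(H)+\mu(H))}$ for all $t$.

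Finally I would compare with an arbitrary pair drawn from the prescribed root spaces. For $X_{\lambda'} \in \g_{\lambda'}$ and $X_{\mu'} \in \g_{\mu'}$, applying conformality once more and substituting the value of $\varphi$ yields
\[
e^{t(\lambda'(H)+\mu'(H))}\, b_{x}(\pi_{x}(X_{\lambda'}),\pi_{x}(X_{\mu'})) = e^{t(\lambda(H)+\mu(H))}\, b_{x}(\pi_{x}(X_{\lambda'}),\pi_{x}(X_{\mu'})).
\]
Since $\lambda'(H)+\mu'(H) \neq \lambda(H)+\mu(H)$, evaluating at any $t \neq 0$ makes the two exponentials distinct, which forces $b_{x}(\pi_{x}(X_{\lambda'}),\pi_{x}(X_{\mu'})) = 0$, the desired orthogonality. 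There is essentially no obstacle here: the whole content is that a conformal linear map scales $b_{x}$ by one global factor, so a single nonzero pairing pins down that factor and every pairing carrying a different $H$-weight is then forced to vanish. The only points to keep straight are the $\ad(H)$-stability of $\g_{x}$ (so that the grading descends to the quotient) and that $\varphi(t)$ is independent of the chosen vectors, which is precisely what being conformal means.
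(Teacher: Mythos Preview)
Your proof is correct and follows essentially the same line as the paper's: both use that $h^t$ acts by the scalar $e^{t\lambda(H)}$ on $\pi_x(\g_\lambda)$, pin down the conformal factor from the nonzero pairing, and deduce vanishing of any pairing with a different total weight. Your version is slightly more explicit about why the grading descends to $\g/\g_x$, but the argument is the same.
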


\begin{proof}
Since the action of $h^t$ on $\g / \g_{x}$ is conformal, we have $c \in \R$ such that $(h^t)^* b_{x} = e^{c t} b_{x}$. Our hypothesis gives $c = \lambda(H) + \mu(H)$. Therefore, for any $\lambda', \mu'$, and $u \in \pi_{x}(\g_{\lambda'})$, $v \in \pi_{x}(\g_{\mu'})$, the identity $e^{ct} b_{x}(u,v) = b_{x}(e^{\lambda'(H)t}u, e^{\mu'(H)t}v)$ implies $b_{x}(u,v) = 0$ as soon as $\lambda'(H) + \mu'(H) \neq \lambda(H) + \mu(H)$.
\end{proof}

We can now prove the following proposition which is our first significant result.

\begin{proposition}
\label{prop:lower_bound}
Assume that $\min(p,q) \leq \dim \g_{2\alpha}$. Then, $\g_{-2\alpha}(x)$ is an isotropic subspace of $T_xM$. In particular, by Observation \ref{obs:transversality}, we get $\min(p,q) = \dim \g_{2\alpha}$.
\end{proposition}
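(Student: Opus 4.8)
The plan is to argue by contradiction, pinning down the conformal weight of $b_{x}$ and then contradicting the smallness of the stabilizer. First I would record that, under the identification $T_{x}(G.x) \simeq \g/\g_{x}$, the subspace $\g_{-2\alpha}(x)$ corresponds to $\pi_{x}(\g_{-2\alpha})$, and that Observation \ref{obs:transversality} ($\g_{x} \cap \g_{-2\alpha} = 0$) makes $\pi_{x}$ injective on $\g_{-2\alpha}$, so $\dim \pi_{x}(\g_{-2\alpha}) = \dim \g_{-2\alpha} = \dim \g_{2\alpha}$. Since $b_{x}$ is the restriction of $g_{x}$ to the orbit, $\g_{-2\alpha}(x)$ is isotropic in $T_{x}M$ exactly when $b_{x}$ vanishes on $\pi_{x}(\g_{-2\alpha})$. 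So I assume, for contradiction, that there are $X, Y \in \g_{-2\alpha}$ with $b_{x}(\pi_{x}(X), \pi_{x}(Y)) \neq 0$. Applying Observation \ref{obs:orthogonality} with $\lambda = \mu = -2\alpha$ then forces the conformal weight of $b_{x}$ under $h^{t}$ to be $c = \lambda(H) + \mu(H) = -4$.

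Next I would exploit $c = -4$. Among all pairs $(\lambda',\mu')$ with $\lambda',\mu' \in \{0,\pm\alpha,\pm 2\alpha\}$, the only one realizing $\lambda'(H) + \mu'(H) = -4$ is $(-2\alpha,-2\alpha)$, since the weights $\lambda(H)$ take values in $\{-2,-1,0,1,2\}$. Hence Observation \ref{obs:orthogonality} shows that every $\pi_{x}(\g_{\lambda})$ with $\lambda \neq -2\alpha$ is orthogonal to all of $\g/\g_{x}$, i.e. lies in the radical $\mathcal{R}$ of $b_{x}$. Using the $\ad(\a)$-invariant decomposition $\g/\g_{x} = \bigoplus_{\lambda} \pi_{x}(\g_{\lambda})$, this gives $\dim \mathcal{R} \geq \dim(\g/\g_{x}) - \dim \pi_{x}(\g_{-2\alpha}) = \dim(\g/\g_{x}) - \dim \g_{2\alpha}$. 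On the other hand, $\mathcal{R}$ is totally isotropic for the non-degenerate form $g_{x}$ of signature $(p,q)$, so $\dim \mathcal{R} \leq \min(p,q)$, which by hypothesis is at most $\dim \g_{2\alpha}$. Combining the two bounds yields $\dim(\g/\g_{x}) \leq 2 \dim \g_{2\alpha}$.

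Finally I would derive a contradiction from the size of the orbit. By Observation \ref{obs:fixed_points} the stabilizer $\g_{x}$ is a proper subalgebra, so $G.x \simeq G/G_{x}$ is a non-trivial homogeneous space of the simple group $G$; its dimension is therefore at least that of the smallest one, the minimal flag variety (the boundary sphere of the associated rank-one symmetric space), namely $\dim(\g/\g_{x}) \geq \dim \g_{\alpha} + \dim \g_{2\alpha}$. Reading off the table above, $\dim \g_{\alpha} > \dim \g_{2\alpha}$ in each of the three cases, whence $\dim \g_{\alpha} + \dim \g_{2\alpha} > 2\dim \g_{2\alpha}$, contradicting the previous inequality. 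Thus $c \neq -4$, so $b_{x}$ vanishes on $\pi_{x}(\g_{-2\alpha})$ and $\g_{-2\alpha}(x)$ is isotropic. The concluding claim is then immediate: an isotropic subspace of $(T_{x}M, g_{x})$ has dimension at most $\min(p,q)$, so $\dim \g_{2\alpha} = \dim \g_{-2\alpha}(x) \leq \min(p,q)$, and together with the standing hypothesis $\min(p,q) \leq \dim \g_{2\alpha}$ this forces equality.

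The step I expect to be the main obstacle is the lower bound $\dim(\g/\g_{x}) \geq \dim \g_{\alpha} + \dim \g_{2\alpha}$ on the codimension of a proper subalgebra. It should follow from the classification of maximal subalgebras of the rank-one real forms, the minimal parabolic (giving the boundary sphere) realizing the least codimension; but if one prefers a self-contained treatment, it can be checked case by case for $\su(1,k)$, $\sp(1,k)$ and $\f_4^{-20}$ from the explicit root-space dimensions recorded in the table.
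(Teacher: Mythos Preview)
Your argument is correct and genuinely different from the paper's. Both proofs begin the same way: assuming $\pi_x(\g_{-2\alpha})$ is not $b_x$-isotropic pins the conformal weight to $c=-4$, whence $\pi_x(\g_{-\alpha}\oplus\g_0\oplus\g_{\alpha}\oplus\g_{2\alpha})$ lies in the radical of $b_x$, an isotropic subspace of $(T_xM,g_x)$. From here you take a global shortcut: bounding the radical by $\min(p,q)\leq\dim\g_{2\alpha}$ yields $\dim(\g/\g_x)\leq 2\dim\g_{2\alpha}$, which collides with the fact that every proper closed subgroup of a rank-one simple $G$ has codimension at least $\dim\g_{\alpha}+\dim\g_{2\alpha}>2\dim\g_{2\alpha}$. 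The paper instead stays inside the root-space structure: from the isotropy of $\pi_x(\g_{-\alpha})$ it forces $\g_{-\alpha}\cap\g_x\neq 0$, then successively obtains $\a\subset\g_x$ and $\g_{\alpha}\oplus\g_{2\alpha}\subset\g_x$, invokes a lemma bounding Abelian subspaces of $\g_{-\alpha}$ (half the dimension for $\su,\sp$; dimension $1$ for $\f_4^{-20}$), and concludes by a short case analysis that $\g_0\subset\g_x$, contradicting irreducibility of $\ad(\g_0)$ on $\g_{-\alpha}$.

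Your route is shorter and more conceptual, but the step you flag is exactly where the weight sits: the minimal-codimension bound is not read off the root-space table alone---it amounts to knowing that the parabolic is a subalgebra of maximal dimension in each of $\su(1,k)$, $\sp(1,k)$, $\f_4^{-20}$, which requires either the classification of maximal subalgebras or an equivalent argument. The paper trades this black box for a self-contained structural lemma (the Abelian-subspace bound) plus casework. Either approach is fine; if you keep yours, replace ``from the explicit root-space dimensions'' with a precise reference for the minimal-codimension fact.
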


\begin{remark}
Except in the case of $F_4^{-20}$, we already know that $\min(p,q) \leq \dim \g_{2\alpha}$ thanks to the actions on Einstein Universes mentioned in the introduction.
\end{remark}

Before starting the proof, let us highlight some consequences. We get that $\Sp(1,k)$ cannot act conformally on a compact pseudo-Riemannian manifold of signature $(1,n)$, $n \geq 2$, or $(2,n)$, $n \geq 2$, and that $F_4^{-20}$ cannot act conformally on a compact pseudo-Riemannian manifold whose metric has index strictly less than $7$ (but this is not optimal, see Section \ref{s:f4}). 

In particular, we obtain that if a simple Lie group of real-rank $1$ acts conformally on a compact Lorentzian manifold, then it is locally isomorphic to $\SO(1,k)$, $k \geq 2$ or $\SU(1,k)$, $k \geq 2$. Thus, Theorem \ref{thm:classification_lorentz} is proved modulo a control of $k$ by the dimension of the manifold, which will be proved in Section \ref{s:so1k} and with Lemma \ref{lem:signature_optimal}.

\begin{proof}
Let us assume that $\pi_{x}(\g_{-2\alpha})$ is not isotropic. Then, according to Observation \ref{obs:orthogonality}, $\pi_{x} (\g_{-\alpha} \oplus \g_0 \oplus \g_{\alpha} \oplus \g_{2\alpha})$ would be isotropic. Since $\dim \g_{-\alpha} > \dim \g_{-2\alpha} \geq \min(p,q)$, we necessarily have $\dim \pi_{x}(\g_{-\alpha}) < \dim \g_{-\alpha}$, \textit{i.e.} $\g_{-\alpha} \cap \g_{x} \neq 0$. If $X_{-\alpha} \in \g_{-\alpha} \cap \g_x$, then $[X_{-\alpha},\theta X_{-\alpha}] = |\alpha|^2 B_{\theta}(X_{\alpha},X_{\alpha}) H \in \g_x$ since $\theta X_{-\alpha} \in \s$. So, $\a \subset \g_x$, implying $[\a,\s] = \g_{\alpha} \oplus \g_{2\alpha} \subset \g_x$.

\vspace{.2cm}

Remark that the subspace $\g_{-\alpha} \cap \g_{x}$ is Abelian since for if $X,Y \in \g_{-\alpha} \cap \g_{x}$, then we have $[X,Y] \in \g_{-2\alpha} \cap \g_{x} = 0$. 

\begin{lemma}
\label{lem:lagrangian}
Let $V \subset \g_{-\alpha}$ be a subspace such that $[V,V] = 0$. 
\begin{itemize}
\item If $\g = \su(1,k)$, then $dim V \leq k-1$.
\item If $\g = \sp(1,k)$, then $\dim V \leq 2(k-1)$.
\item If $\g = \f_4^{-20}$, then $\dim V \leq 1$.
\end{itemize}
\end{lemma}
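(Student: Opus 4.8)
The plan is to reinterpret the condition $[V,V]=0$ through the bilinear bracket furnished by the root grading, and then to treat the three algebras according to the associated division algebra $\K\in\{\C,\H,\Oc\}$. Since $-\alpha-\alpha=-2\alpha$ while $-\alpha-2\alpha\notin\Delta$, the Lie bracket restricts to an alternating bilinear map
\[
\beta:\g_{-\alpha}\times\g_{-\alpha}\longrightarrow\g_{-2\alpha},
\]
and $V\subset\g_{-\alpha}$ satisfies $[V,V]=0$ exactly when $V$ is totally isotropic for $\beta$. The point is that $\g_{-\alpha}\oplus\g_{-2\alpha}$ is the Heisenberg-type nilpotent algebra of $\K$-hyperbolic geometry: identifying $\g_{-\alpha}\cong\K^{k-1}$ and $\g_{-2\alpha}\cong\Ima\K$ (with $\K=\Oc$ and $k-1=1$ in the exceptional case), the map $\beta$ is the imaginary part $\beta(X,Y)=\Ima h(X,Y)$ of the standard $\K$-Hermitian form $h$. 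Thus $[V,V]=0$ is equivalent to $h|_V$ being $\R$-valued, and each nonzero $\xi\in\g_{-2\alpha}^*$ yields a scalar alternating form $\omega_\xi=\langle\beta(\cdot,\cdot),\xi\rangle$ on $\g_{-\alpha}$ for which $V$ is isotropic.

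First I would dispose of the two classical cases by a single symplectic form. The key input is that $\ad(X):\g_{-\alpha}\to\g_{-2\alpha}$ is surjective for every nonzero $X$: letting $\xi$ run through $\Ima\K$ and taking $Y=X\xi$ gives $\beta(X,X\xi)=|X|^2\xi$, which sweeps out all of $\Ima\K$. Consequently each $\omega_\xi$ with $\xi\neq 0$ is a non-degenerate alternating form on $\g_{-\alpha}$, so any totally isotropic subspace has dimension at most $\tfrac12\dim\g_{-\alpha}$. For $\g=\su(1,k)$ this gives $\dim V\leq\tfrac12\cdot 2(k-1)=k-1$, and for $\g=\sp(1,k)$ it gives $\dim V\leq\tfrac12\cdot 4(k-1)=2(k-1)$, as claimed.

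The remaining, and genuinely harder, case is $\f_4^{-20}$, where the symplectic bound only gives $\dim V\leq\tfrac12\cdot 8=4$ and must be improved to $\dim V\leq 1$. Here I would exploit that $\Oc$ is a \emph{division} algebra. With $\g_{-\alpha}\cong\Oc$ and $h(u,v)=\bar u v$, the condition $h|_V$ real reads $\bar u v\in\R$ for all $u,v\in V$. If $u\neq 0$ and $\bar u v=r\in\R$, then left-multiplying by $u$ and using the composition-algebra relation $u^2=(\Tr u)u-|u|^2$ together with the left-alternative identity $u(uv)=(uu)v$ yields $u(\bar u v)=|u|^2 v$, whence $v=(r/|u|^2)\,u\in\R u$. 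Thus any two vectors of $V$ are proportional and $\dim V\leq 1$.

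The main obstacle is precisely this last case: the naive isotropy count is far from sharp, and one must replace it by the absence of zero divisors in $\Oc$ — equivalently, the fact that the octonionic Hermitian form admits no nontrivial ``totally real'' subspace of dimension $>1$ — while carefully invoking just enough of the composition/alternative identities to circumvent non-associativity. A secondary technical point to pin down rigorously is the identification of $\beta$ with $\Ima h$ and of the right-multiplication used in the surjectivity argument, which should be extracted from the explicit realization of the opposite nilradical $\g_{-\alpha}\oplus\g_{-2\alpha}$ in each of the three algebras.
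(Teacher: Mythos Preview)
Your argument is correct. For $\su(1,k)$ and $\sp(1,k)$ your treatment coincides with the paper's: both pick a nonzero linear functional on $\g_{-2\alpha}$ to produce a symplectic form on $\g_{-\alpha}$ and bound $\dim V$ by half the dimension. Your surjectivity argument for $\ad(X)|_{\g_{-\alpha}}$ is a nice way to certify non-degeneracy, which the paper simply asserts.

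For $\f_4^{-20}$ the two proofs diverge. The paper identifies the bracket with $(x_1,x_2)\mapsto x_1\bar{x_2}-x_2\bar{x_1}$ on $\Oc$ and uses that this map is equivariant under $\m\simeq\mathfrak{spin}(7)$; since $\Spin(7)$ acts transitively on the unit sphere of $\Oc\simeq\R^8$, one may assume $1\in V$, after which the condition forces every $x\in V$ to satisfy $x=\bar{x}$, i.e.\ $V\subset\R$. You instead work directly with the composition-algebra structure: from $\bar{u}v\in\R$ and the left-alternative identity $u(\bar{u}v)=|u|^2 v$ you conclude $v\in\R u$. Your route avoids invoking the $\Spin(7)$-action but requires the alternative law; the paper's route is a one-line reduction once the equivariance and transitivity are known. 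Both are short and self-contained; the only thing to tidy in yours is the convention for $h$ (the paper's bracket corresponds to $\Ima(u\bar{v})$ rather than $\Ima(\bar{u}v)$), but the argument adapts verbatim using right-alternativity instead.
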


\begin{proof}
In the first case, the bracket $[.,.] : \g_{-\alpha} \wedge \g_{-\alpha} \rightarrow \g_{-2\alpha}$ can be interpreted as a symplectic form on $\R^{2(k-1)}$, whose Lagrangian subspaces are $k-1$ dimensional. In the second case, the bracket is interpreted as a map $\H^{k-1} \wedge \H^{k-1} \rightarrow \Span(i,j,k)$ taking values in the space of purely imaginary quaternions. The component on $i$ of this map is a symplectic form on $\R^{4(k-1)}$, whose isotropic subspaces are at most $2(k-1)$ dimensional. In the case of $\f_4^{-20}$, this bracket is conjugated to the map from $\Oc \times \Oc$ to the space of purely imaginary octonions, given by $(x_1,x_2) \mapsto x_1 \bar{x_2} - x_2 \bar{x_1}$. This map being equivariant under the action of $\mathfrak{spin}(7) \simeq \m$, if $V \neq 0$, we can assume that the unit $1$ belongs to $V$, and it becomes clear that $V$ is the real axis of $\Oc$.
\end{proof}

We now finish the proof case by case.

\begin{itemize}
\item Case $\g = \su(1,k)$. The metric being Lorentzian and since $\dim \pi_{x}(\g_{-\alpha}) \geq 1$, we obtain that $\pi_{x}(\g_0 \oplus \g_{\alpha} \oplus \g_{\alpha}) = 0$. In particular, $\g_0 \subset \g_{x}$.
\item Case $\g = \sp(1,k)$. By the previous lemma, $\dim \pi_{x}(\g_{-\alpha}) \geq 2$, implying $\dim \pi_{x}(\m) \leq 1$ since the metric has index at most $3$. Since $\m \simeq \sp(1) \oplus \sp(k-1)$, it does not admit proper codimension $1$ subalgebras. Thus, $\g_0 \subset \g_{x}$.
\item Case $\g = \f_4^{-20}$. The previous lemma ensures that $\dim \pi_{x} (\g_{-\alpha}) \geq 7$, and since the metric has index at most $7$, we get that $\pi_{x}(\g_0 \oplus \g_{\alpha} \oplus \g_{2\alpha})= 0$. Thus $\g_0 \subset \g_{x}$.
\end{itemize}

Thus, in all cases we have $\g_0 \subset \g_{x}$. It is not difficult to observe that each time, $\ad(\g_0)$ acts irreducibly on $\g_{-\alpha}$. But we have seen that $\g_{-\alpha} \cap \g_{x}$ is a non-trivial proper subspace of $\g_{-\alpha}$, and it is of course invariant under $\ad(\g_0 \cap \g_{x})$. This is our contradiction.
\end{proof}

In the cases of $\SU(1,k)$ and $\Sp(1,k)$, we know that $\min(p,q) = \dim \g_{2\alpha}$. This proposition moreover says that in any compact invariant subset, there is a point $x$ such that $\g_{-2\alpha}(x) \subset T_{x} M$ is a maximally isotropic subspace (compare with Theorem 1 of \cite{bader_nevo}). This fact will be the starting point for the analysis of the orbit of $x$ and the action of the group in a neighborhood of this orbit in Section \ref{s:conformal_flatness}. Precisely, we will derive conformal flatness of a neighborhood of $x$ by considering the action of the isotropy $G_{x}$. We will use repeatedly the following

\begin{observation}
\label{obs:linearizability}
Assume that there exists $x \in M$ such that $\a \oplus \g_{\alpha} \oplus \g_{2\alpha} \subset \g_x$. Then, the conformal vector field $H \in \a$ is locally linearizable near $x$ and if $\phi^t$ denotes the corresponding conformal flow, $T_x \phi^t$ is an hyperbolic one parameter subgroup of $\CO(T_xM,g_x)$. Moreover, for any $X \in \g_{\alpha} \oplus \g_{2\alpha}$, if $f := e^X$, then $T_xf$ is a unipotent element of $\SO(T_xM,g_x)$.
\end{observation}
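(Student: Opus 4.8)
\emph{The plan is to realise the conformal structure as a Cartan geometry and read the local dynamics of $H$ at $x$ off its holonomy.} Model the conformal class $[g]$ by its canonical Cartan geometry $(\mathcal{B}\to M,\omega)$ of type $(\PO(p+1,q+1),\Ein^{p,q})$, and use the $|1|$-grading $\so(p+1,q+1)=\n^-\oplus\co(p,q)\oplus\n^+$, in which $\n^\pm$ are abelian, $\n^-\simeq T_xM$, and $\co(p,q)\simeq\co(T_xM,g_x)$; write $\mathfrak{q}:=\co(p,q)\oplus\n^+$ for the stabiliser of the isotropic line attached to $x$. Since $\s=\a\oplus\g_{\alpha}\oplus\g_{2\alpha}\subset\g_x$, the point $x$ is fixed by $S$; fixing a lift $\hx$ in the fibre over $x$ and evaluating $\omega$ on the (vertical) lifts of the fields of $\g_x$ produces the holonomy morphism $\iota:\g_x\to\mathfrak{q}$, which is injective by Observation \ref{obs:fixed_points} (essentiality forbids a field acting trivially near $x$). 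Composing with $\mathrm{pr}:\mathfrak{q}\to\co(p,q)$ gives the linear isotropy $\rho:\g_x\to\co(T_xM,g_x)$, so that $T_x\phi^t=\exp(t\,\rho(H))$ and, for $X\in\g_{\alpha}\oplus\g_{2\alpha}$ with $f=e^X$, $T_xf=\exp(\rho(X))$ (up to the sign fixed by our conventions, which affects none of the conclusions below).

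First I would extract the weights. Applying the morphism $\iota$ to $[H,X]=jX$ for $X\in\g_{j\alpha}$, $j\in\{1,2\}$, and writing $\iota(H)=\rho(H)+\beta$ with $\beta\in\n^+$, the relations $[\n^+,\n^+]=0$ and $[\co(p,q),\n^+]\subset\n^+$ show that the $\co(p,q)$-component of $[\iota(H),\iota(X)]$ is exactly $[\rho(H),\rho(X)]$; hence $[\rho(H),\rho(X)]=j\,\rho(X)$ in $\co(T_xM,g_x)$. Thus $\rho(\g_{\alpha})$ and $\rho(\g_{2\alpha})$ are eigenspaces of $\ad(\rho(H))$ for the positive weights $1$ and $2$, and in particular $\rho(\g_{\alpha}\oplus\g_{2\alpha})\subset[\rho(\s),\rho(\s)]$, with $\rho(\s)$ solvable. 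Triangularising $\rho(\s)$ over $\C$ by Lie's theorem, every diagonal character vanishes on the derived algebra, so each $\rho(X)$ with $X\in\g_{\alpha}\oplus\g_{2\alpha}$ has vanishing diagonal, i.e.\ is nilpotent. Therefore $T_xf=\exp(\rho(X))$ is unipotent; as it is also conformal, its conformal factor must equal $1$ (unipotency forces $\det=1$), so it lies in $\SO(T_xM,g_x)$. This settles the last assertion.

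It remains to prove that $\rho(H)$ is $\R$-split, so that $T_x\phi^t$ is hyperbolic, and that $H$ is linearisable. Since $H$ lies in the split Cartan subspace $\a$, $\ad_{\g}(H)$ is $\R$-diagonalisable, and as $\g_x$ is $\ad(\a)$-stable the restriction $\ad_{\g}(H)|_{\g_x}$ is $\R$-diagonalisable with integer eigenvalues; transporting through the isomorphism $\iota$ onto its image shows $\ad(\iota(H))$ is $\R$-split on $\iota(\g_x)$. \emph{The main obstacle is to upgrade this to the statement that $\iota(H)$ is itself an $\R$-split semisimple element of $\so(p+1,q+1)$}, i.e.\ that its additive Jordan decomposition carries neither an elliptic nor a nilpotent part: these parts are polynomials in $\ad(\iota(H))$, hence centralise $\iota(\g_x)$, and I would rule them out using the transversality $\g_{-2\alpha}\cap\g_x=0$ of Observation \ref{obs:transversality}, which forces the holonomy to detect genuine hyperbolic directions in $\n^-$. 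Granting this, $\rho(H)=\mathrm{pr}(\iota(H))$ is $\R$-split and $T_x\phi^t$ is a hyperbolic one-parameter subgroup of $\CO(T_xM,g_x)$. Finally, that a conformal vector field with $\R$-split semisimple holonomy is linearisable near its zero is exactly the local linearisation statement of the Cartan-geometric theory of conformal dynamics (in the spirit of \cite{bader_frances_melnick} and Frances' work on local conformal flows), which I would invoke to conclude that $H$ is locally linearisable near $x$.
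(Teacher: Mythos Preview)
Your argument for the unipotency of $T_xf$ via Lie's theorem applied to the solvable image $\rho(\s)\subset\co(T_xM)$ is correct and is a nice alternative to the paper's route, which instead embeds each pair $(H,E)$ with $E\in\g_{\alpha}$ (or $\g_{2\alpha}$) into an $\sl(2,\R)$-triple and reads off nilpotency of the holonomy of $E$ from $\sl_2$-representation theory.

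However, your treatment of the $\R$-splitness of $\iota(H)$ has a genuine gap, and you flag it yourself. Knowing that $\ad(\iota(H))$ is $\R$-diagonalisable on the subspace $\iota(\g_x)\subset\so(p+1,q+1)$ only tells you that the nilpotent and elliptic parts of $\iota(H)$ centralise $\iota(\g_x)$; it does \emph{not} force them to vanish, since $\iota(\g_x)$ need not be large enough in $\so(p+1,q+1)$ to make its centraliser trivial. Your proposed fix, invoking the transversality $\g_{-2\alpha}\cap\g_x=0$ of Observation~\ref{obs:transversality}, is both unjustified here (that observation is proved only for points furnished by Proposition~\ref{prop:virtual_isotropy}, which is not part of the hypothesis of the present statement) and, more importantly, not turned into an actual argument: you do not explain how transversality would exclude an elliptic or nilpotent component of $\iota(H)$.

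The paper closes this gap by a different idea: rather than working only inside the isotropy $\g_x$, it completes $H$ and $E\in\g_{\alpha}$ (resp.\ $\g_{2\alpha}$) to an $\sl(2,\R)$-triple $(H,E,F)$ with $F=\theta E\in\g_{-\alpha}$ (resp.\ $\g_{-2\alpha}$), where $F$ does \emph{not} vanish at $x$. Using the horizontality of the curvature form (as in \cite{article4}), one obtains a Lie algebra embedding of this whole $\sl_2$ into $\so(p+1,q+1)$ whose values on $H$ and $E$ are their holonomies at $\hx$. Finite-dimensional $\sl_2$-representation theory then forces the image of $H$ to be $\R$-split (and that of $E$ to be nilpotent), with no further case analysis. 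This is the missing ingredient: you must bring an element \emph{outside} the isotropy into play to pin down the Jordan type of $\iota(H)$. Once $\R$-splitness is established, your appeal to Frances' linearisation criterion is the same as the paper's.
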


\begin{proof}
The arguments are based on general properties of automorphisms of Cartan geometries, which are central in the study of pseudo-Riemannian conformal maps.

Let $H \in \a$ and $E \in \g_{\alpha}$ for instance, the other case being exactly similar. Let us note $F := \theta E$. Then, $(H,E,F)$ generates a Lie algebra $\s_{\alpha} < \g$ isomorphic to $\sl(2,\R)$. The vector fields $H$ and $E$ vanish at $x$, and we are in a situation similar to \cite{article4}, Section 4.2.1., the difference being that we are not necessarily in Lorentzian signature. By the same argument based on the horizontality of the curvature form, we obtain a Lie algebra embedding $\rho : \s_{\alpha} \rightarrow \so(p+1,q+1)$, such that there exists $v \in \R^{p+1,q+1} \setminus \{0\}$ isotropic which is an eigenvector for both $\rho(H)$ and $\rho(E)$ but not for $\rho(F)$, and such that $\rho(H)$ and $\rho(E)$ are the respective \textit{holonomies} of $H$ and $E$ at some point $\hx$ over $x$ in the Cartan bundle.

Let us note $\p < \so(p+1,q+1)$ the stabilizer of the line $\R.v$. Since $\rho$ is a finite dimensional representation of $\sl(2,\R)$, which splits into irreducible ones, we see that $\rho(H)$ must be an $\R$-split element of $\p$ and that $\rho(E)$ is a nilpotent element of $\p$. 

We now use the following

\begin{proposition*}[\cite{frances_localdynamics}, Prop. 4.2]
Let $X \in \Kill(M,[g])$ fixing a point $x \in M$. Then, $X$ is locally linearizable near $x$ if and only if its holonomy $X_h \in \p$ is linear.
\end{proposition*}

Recall that $\p$ can be interpreted as the Lie algebra of the conformal group of $\R^{p,q}$, \textit{i.e.} $\p \simeq \co(p,q) \ltimes \R^{p+q}$. An element of $\p$ is said to be ``linear'' if it belongs to $\co(p,q)$ up to conjugacy. Moreover, we can see that the differential $T_x \phi_X^{t}$ of the flow of $X$ is conjugated to the quotient action of $\Ad(\e^{tX_h})$ on $\so(p+1,q+1) / \p$ (the vector field being linearizable or not).

We can directly adapt the proof of \cite{article3}, Lemma 5.5, to the general pseudo-Riemannian setting to conclude that any $\R$-split element in $\p$ is linear. So, this proves that $H$ is locally lienarizable near $x$, and that the differential at $x$ of its flow is $\R$-split. At last, since the holonomy $E_h$ of $E$ is nilpotent, we get that $T_x \phi_E^t$ is a unipotent one-parameter subgroup of $\SO(T_xM)$.
\end{proof}

\section{Compact minimal subsets for Lorentzian conformal actions of $\SO(1,k)$}
\label{s:so1k}

The aim of this section is to explain how to observe directly that if a Lie group $G$ locally isomorphic to $\SO(1,k)$ acts conformally on a compact Lorentzian manifold $(M,g)$ of dimension $n \geq 3$, then $k \leq n$. So, we will assume that $k \geq 4$. The restricted root-space decomposition is $\g = \a \oplus \m \oplus \g_{\pm \alpha}$, with $\m \simeq \so(k-1)$ and $\dim \g_{\pm \alpha} = k-1$, and $\ad(\m)$ acts on $\g_{\pm \alpha}$ via the standard representation of $\so(k-1)$.

\begin{proposition}
Let $x$ be a point given by Proposition \ref{prop:virtual_isotropy}. Then, its $G$-orbit is either
\begin{itemize}
\item A fixed point of $G$;
\item A sphere $\S^{k-1} \simeq G/P$, where $P$ is the stabilizer of an isotropic line of $\R^{1,k}$. The metric $g$ induces the standard conformal Riemannian structure on $\S^{k-1}$.
\item A degenerate orbit of dimension $k$ fibering over $\S^{k-1}$ with $1$-dimensional fiber.
\end{itemize}
\end{proposition}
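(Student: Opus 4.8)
The plan is to classify the proper $\Ad(S)$-invariant subalgebras $\g_x \subsetneq \g$ that can occur at a point $x$ furnished by Proposition \ref{prop:virtual_isotropy}, and then to read off the orbit $G\cdot x \simeq G/G_x$ and the induced form $q_x$ in each case. If $\g_x = \g$ the point is fixed (first item), so I assume $\g_x$ is proper. Recall that $\g_x = \bigoplus_\lambda (\g_x \cap \g_\lambda)$ is $\ad(\a)$-graded and, since $\g_\alpha \subset \s$, that it is $\ad(\g_\alpha)$-invariant. As $2\alpha \notin \Delta$, Lemma \ref{lem:transversality} applied to $\lambda = \alpha$ leaves exactly two possibilities, which I treat separately: either $\g_{-\alpha} \subset \g_x$, or $\g_x \cap \g_{-\alpha} = 0$.

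First, suppose $\g_{-\alpha} \subset \g_x$. For $X_{-\alpha} \in \g_{-\alpha}$ one has $[\theta X_{-\alpha}, X_{-\alpha}] = B_\theta(X_{-\alpha}, X_{-\alpha}) H_\alpha \in \a$ as in the computation of Lemma \ref{lem:transversality}; since $\theta X_{-\alpha} \in \g_\alpha \subset \s$ this bracket lies in $\g_x$, whence $\a \subset \g_x$, and the $\m$-components of $[\g_\alpha, \g_{-\alpha}]$, which span $\m = \so(k-1) \simeq \Lambda^2 \R^{k-1}$, also lie in $\g_x$. Hence $\g_x \supseteq \p^- := \a \oplus \m \oplus \g_{-\alpha}$; any subalgebra strictly containing $\p^-$ meets the $\ad(\m)$-irreducible space $\g_\alpha$ nontrivially and so equals $\g$, forcing $\g_x = \p^-$. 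Then $G\cdot x \simeq G/P^- \simeq \S^{k-1}$, and as the isotropy acts on $\g/\g_x \simeq \g_\alpha$ through the standard representation of $\so(k-1)$ — which preserves a unique conformal class, the definite one — the form $q_x$ is Riemannian: this is the second item.

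Next, suppose $\g_x \cap \g_{-\alpha} = 0$, so $\dim \pi_x(\g_{-\alpha}) = k-1 \geq 3$. Because the index is $1$, no totally isotropic subspace has dimension $>1$, so $b_x$ does not vanish identically on $\pi_x(\g_{-\alpha})$; choosing $u,v$ there with $b_x(u,v) \neq 0$, Observation \ref{obs:orthogonality} forces the conformal weight of $h^t$ to be $c = -2$, whence $\pi_x(\g_0 \oplus \g_\alpha)$ is totally isotropic and orthogonal to $\pi_x(\g_{-\alpha})$; thus $\delta := \dim \pi_x(\g_0 \oplus \g_\alpha) \leq 1$. If $\delta = 0$ then $\g_x = \g_0 \oplus \g_\alpha = \p^+$, and we recover the Riemannian sphere $G/P^+ \simeq \S^{k-1}$ exactly as above. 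If $\delta = 1$, the surviving line is $\ad(\a)$-invariant, hence of pure weight; weight $\alpha$ is impossible, since it would make $\g_x \cap \g_\alpha$ a proper nonzero $\ad(\m)$-invariant subspace of the irreducible $\g_\alpha$. So the weight is $0$: $\g_\alpha \subset \g_x$ and $\g_x \cap \g_0$ has codimension $1$ in $\g_0 = \a \oplus \m$. As $\so(k-1)$ is semisimple and has no codimension-one subalgebra, the missing direction cannot sit in $\m$; therefore $\m \subset \g_x$, the missing direction is $\a$, and $\g_x = \m \oplus \g_\alpha$. The orbit then has dimension $k$, and $\g_x \subset \p^+$ yields an equivariant fibration $G/G_x \to G/P^+ \simeq \S^{k-1}$ with $1$-dimensional fibre $P^+/G_x$. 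Finally $\pi_x(H)$ has weight $0$ while $c = -2 \neq 0$, so $\pi_x(H)$ is null and orthogonal to $\pi_x(\g_{-\alpha})$, i.e. it spans the radical of $q_x$; the orbit is degenerate, with null fibre over a Riemannian $\S^{k-1}$. This is the third item, and the three cases are exhaustive.

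The main obstacle is the subcase $\delta = 1$: there I must rule out the spurious $\ad(\a)$-graded subalgebras using both the irreducibility of $\ad(\m)$ on $\g_{\pm\alpha}$ and the absence of codimension-one subalgebras of $\so(k-1)$, and then confirm that the surviving geometry is genuinely the claimed degenerate fibration — that the $\a$-direction is lightlike and spans the radical of $q_x$ while the base is the round sphere. The weight bookkeeping of Observation \ref{obs:orthogonality} together with the Lorentzian hypothesis $\min(p,q)=1$ is exactly what pins down $c = -2$ and $\delta \leq 1$, and this is the heart of the argument.
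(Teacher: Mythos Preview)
Your proof follows the same strategy as the paper's: classify the $\Ad(S)$-invariant stabilizers $\g_x$ via Lemma~\ref{lem:transversality} and the weight bookkeeping of Observation~\ref{obs:orthogonality}, then read off the orbit type and the induced form $q_x$. The analysis of your second case (where $\g_x \cap \g_{-\alpha} = 0$) matches the paper's argument closely and is correct.

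There is, however, an oversight in your first case. You correctly deduce that $\g_{-\alpha} \subset \g_x$ forces $\a \oplus \m \subset \g_x$, but you then stop at $\g_x \supseteq \p^-$ and conclude $\g_x = \p^-$. This cannot happen: $\p^- = \a \oplus \m \oplus \g_{-\alpha}$ is \emph{not} $\ad(\g_\alpha)$-invariant, since for $H \in \a$ (or $Z \in \m$) and nonzero $X_\alpha \in \g_\alpha$ one has $[X_\alpha, H] \in \g_\alpha \not\subset \p^-$. Equivalently, once you know $\a \subset \g_x$, the very $\ad(\g_\alpha)$-invariance you used to get there gives $\g_\alpha = [\g_\alpha, \a] \subset \g_x$, hence $\g_x = \g$. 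So your first case collapses entirely into the fixed-point case, and the actual list of stabilizers is $\g$, $\g_0 \oplus \g_\alpha$, or $\m \oplus \g_\alpha$ --- exactly the paper's list. The paper reaches this conclusion more directly, by noting (as in the proof of Observation~\ref{obs:transversality}) that $\g_{-\alpha} \cap \g_x \neq 0$ already forces $\g_x = \g$. Since your spurious case $\g_x = \p^-$ would have produced the sphere anyway, the proposition's conclusion survives, but the intermediate claim that $\p^-$ occurs as an $\Ad(S)$-invariant stabilizer is false.
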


\begin{remark}
It can be proved that in the third case, the orbit is in fact a \textit{trivial circle bundle} over $\S^{k-1}$. Indeed, the arguments of Section 3.2 of \cite{article4} are directly transposable in this situation: By using Pesin Theory, we can prove that every fiber is a light-like periodic orbit of some hyperbolic one parameter subgroup in $\SO(1,k)$. Thus, by Remark \ref{rem:orbit_closure}, we obtain that any compact minimal $G$-invariant subset of $M$ is one of these three compact $G$-orbits.
\end{remark}

\begin{proof}
As the proof of Obervation \ref{obs:transversality} shows, if $\g_{-\alpha} \cap \g_{x} \neq 0$, then $\g_{x} = \g$ and $x$ is a fixed point of $G$. So, let us assume that $\g_{-\alpha} \cap \g_{x} = 0$. Since $\dim \g_{-\alpha} = k-1 \geq 3$, the subspace $\pi_{x}(\g_{-\alpha})$ cannot be isotropic. By Observation \ref{obs:orthogonality}, we get that $\pi_{x}(\g_0 \oplus \g_{\alpha})$ is isotropic, and hence has dimension at most $1$. In particular, $\g_0 \cap \g_{x}$ is a subalgebra of $\g_0$ with codimension at most $1$. Since $\g_0 \simeq \R \oplus \so(k-1)$ and $k \geq 4$, we obtain that $\g_0 \cap \g_{x} =\m$ or $\g_0 \subset \g_{x}$. In both cases, $\ad(\g_0 \cap \g_{x})$ acts irreducibly on $\g_{\pm \alpha}$. Since $\dim \pi_{x}(\g_{\alpha}) \leq 1$, $\g_{x} \cap \g_{\alpha}$ is a non-trivial subspace of $\g_{\alpha}$, which is $\ad(\g_0 \cap \g_{x})$-invariant. Thus, $\g_{\alpha} \subset \g_{x}$. Finally, we have two possibilities:
\begin{itemize}
\item $\g_{x} = \a \oplus \m \oplus \g_{\alpha}$, or
\item $\g_{x} = \m \oplus \g_{\alpha}$.
\end{itemize}
In both cases, $\m \simeq \so(k-1)$ is in $\g_{x}$. Let $M_G < G$ be the connected Lie subgroup corresponding to $\m$. The restriction of $q_{x}$ to $\pi_{x}(\g_{-\alpha})$ cannot be zero and is $\Ad(M_G)$-invariant. This determines up to a positive factor $q_{x}|_{\pi_{x}(\g_{-\alpha})}$. In the first case, we obtain that $G / G_{x}$ is the $k-1$-sphere and the orbit is Riemannian (necessarily the standard Riemannian structure). In the second case, $\pi_x(\g)$ is degenerate, with kernel $\pi_x(\a)$, and the quadratic form induced on $\pi_x(\g) / \pi_x(\a)$ is the same as the previous case.
\end{proof}

Assume that a Lie group locally isomorphic to $\SO(1,k)$, $k \geq 4$, acts conformally on a compact Lorentzian manifold of dimension $n \geq 3$. \begin{itemize}
\item If there exists a fixed point $x$, then the isotropy representation gives an inclusion $\SO(1,k) \hookrightarrow \SO(1,n-1)$. Thus, $k \leq n-1$.
\item If the group admits a Riemannian spherical orbit, then this orbit cannot be open in $M$, implying that it has dimension at most $n-1$. So, we obtain $k-1 \leq n-1$. 
\item If the group admits an orbit of the third type, this orbit cannot be open in $M$ since it is degenerate, implying that it has dimension at most $n-1$. Thus, $k \leq n-1$.
\end{itemize}

In any case, we obtain that $k \leq \dim M$.

\section{Conformal flatness of Pseudo-Riemannian manifolds in optimal index}
\label{s:conformal_flatness}

Let $k \geq 2$ and $(M,g)$ be a compact pseudo-Riemannian manifold of signature $(p,q)$, with $p+q \geq 3$, on which a Lie group $G$ locally isomorphic to $\SU(1,k)$ or $\Sp(1,k)$ acts conformally. We assume that the index $\min(p,q)$ is minimal with this property, \textit{i.e.} the manifold is Lorentzian if $G \simeq_{\text{loc}} \SU(1,k)$; and has signature $(3,n)$, $n \geq 3$, when $G \simeq_{\text{loc}} \Sp(1,k)$. In this section, we prove conformal flatness of $(M,g)$ in a unified way.

\vspace{.2cm}

The strategy is the following: First, we prove that for any $x \in M$, the closure of its orbit $\bar{G.x}$ contains an orbit $G.y$ which is, up to conformal covers and quotients, the Einstein space of same signature exhibited in the introduction. Then, we prove that such orbits are contained in a conformally flat open subset $U$. Thus, there is $g \in G$ such that $g.x \in U$, proving that $g^{-1}U$ is a conformally flat neighborhood of $x$.

\subsection{Minimal compact subsets}

\begin{lemma}
\label{lem:signature_optimal}
Assume that $\g \neq \f_4^{-20}$. Let $x$ be a point where the conclusions of Proposition \ref{prop:virtual_isotropy} are true. Then, the isotropy of $x$ has the form
\begin{equation*}
\g_x = \a \oplus (\m \cap \g_x) \oplus \g_{\alpha} \oplus \g_{2\alpha}.
\end{equation*}
and $\m \cap \g_x$ has codimension $\dim \g_{2\alpha}$ in $\m$. Moreover, the orbit is non-degenerate and we have the orthogonality relations:
\begin{equation*}
\begin{array}{cccccc}
T_x(G.x) = & \g_{-2\alpha}(x) & \oplus & \g_{-\alpha}(x) & \oplus & \m(x) \\
                 &  \text{Isotropic} & \perp & Euc. & \perp & \text{Isotropic}
\end{array}
\end{equation*}
At last, we also have $\m(x) = \m^1(x)$.
\end{lemma}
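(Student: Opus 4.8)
=== PROOF PROPOSAL ===

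The plan is to leverage the machinery already established in Section \ref{ss:general_observations}, together with the optimal-index conclusion of Proposition \ref{prop:lower_bound}, to pin down $\g_x$ completely. By Proposition \ref{prop:lower_bound} we know $\min(p,q) = \dim \g_{2\alpha}$ and that $\g_{-2\alpha}(x)$ is a maximal isotropic subspace of $T_x M$; by Observation \ref{obs:transversality} we have $\g_x \cap \g_{-2\alpha} = 0$, so $\pi_x(\g_{-2\alpha})$ is genuinely $\dim \g_{2\alpha}$-dimensional and isotropic. First I would feed this isotropic subspace into Observation \ref{obs:orthogonality}: since $\g_{-2\alpha}(x)$ is isotropic of \emph{maximal} dimension, the whole space $T_x M$ cannot have its negative part exhausted twice, which forces strong pairings. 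Concretely, the conformal hyperbolic flow $h^t = \Ad(e^{tH})$ acts on $\pi_x(\g_\lambda)$ with weight $\lambda(H)$, and the only way a nonzero isotropic block $\g_{-2\alpha}(x)$ can pair nontrivially is against the block of opposite weight, namely $\g_{2\alpha}(x)$. I would argue that $\pi_x(\g_{-2\alpha})$ must pair nondegenerately with $\pi_x(\g_{2\alpha})$ — otherwise $\g_{-2\alpha}(x)$ would lie in the radical or extend the isotropic cone beyond maximal dimension — and this pairing is exactly what certifies the orbit is non-degenerate and forces $\g_{2\alpha} \subset \g_x$ (its image must vanish to keep dimensions consistent).

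The second step is to propagate $\g_{2\alpha} \subset \g_x$ downward. Since $\g_x$ is $\Ad(S)$-invariant and in particular $\ad(\g_\alpha)$-invariant, and since $[\g_\alpha, \g_\alpha] \subset \g_{2\alpha}$ while $\a \subset [\g_{2\alpha}, \g_{-2\alpha}]$, I would use the same bracketing mechanism as in the proof of Observation \ref{obs:transversality} to extract $\a \subset \g_x$. Once $\a \subset \g_x$, applying $\ad(\a)$-invariance and the relation $[\a, \g_\alpha] = \g_\alpha$ gives $\g_\alpha \subset \g_x$. At this stage $\g_x \supset \a \oplus \g_\alpha \oplus \g_{2\alpha}$, and combined with $\g_x \cap \g_{-2\alpha} = 0$ and the weight count from Observation \ref{obs:orthogonality} applied to the now-known pairings, the block $\g_{-\alpha}(x)$ must be Euclidean (non-degenerate of definite sign) and $\g_{-\alpha} \cap \g_x = 0$. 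What remains is the $\m$-block: I would show $\g_x \cap \m$ is a subalgebra of $\m$ whose image $\pi_x(\m)$ is isotropic (it has the same weight $0$ as $\pi_x(\g_0)$ but is forced isotropic by pairing against $\g_{-2\alpha}(x)$ via Observation \ref{obs:orthogonality}), hence $\dim \pi_x(\m) \leq \dim \g_{2\alpha}$; matching dimensions with the maximal isotropic $\g_{-2\alpha}(x)$ on the opposite side gives exactly $\codim_{\m}(\m \cap \g_x) = \dim \g_{2\alpha}$, which yields the stated form of $\g_x$ and the orthogonality table.

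For the final assertion $\m(x) = \m^1(x)$, I would use the product structure $\m = \m^1 \oplus \m^2$ recorded in the table (with $\m^1 = \k \cap [\g_{-2\alpha}, \g_{2\alpha}]$), together with the pairing relation linking $\m(x)$ to $\g_{-2\alpha}(x)$ and $\g_{2\alpha}(x)$. The point is that $\m^2$ ($\su(k-1)$ or $\sp(k-1)$) acts trivially on $\g_{\pm 2\alpha}$ but nontrivially and irreducibly on $\g_{\pm \alpha}$, so the constraint that $\pi_x(\m)$ be isotropic and pair only with $\pi_x(\g_{2\alpha})$ forces $\m^2 \subset \g_x$, leaving $\m(x) = \m^1(x)$. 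The main obstacle I anticipate is the careful dimension-counting argument establishing that $\pi_x(\m)$ is isotropic of dimension exactly $\dim \g_{2\alpha}$: one must rule out the possibility that part of $\m$ contributes Euclidean directions, and this requires combining the maximality of the isotropic cone (from Proposition \ref{prop:lower_bound}) with the conformal weight decomposition delicately, since $\m$ and $\g_{-2\alpha}$ share no obvious bracket relation. Verifying the irreducibility and triviality of the $\m^2$-actions on the various root spaces — straightforward for the classical cases using the explicit descriptions in Lemma \ref{lem:lagrangian} — is the routine part that secures $\m(x) = \m^1(x)$.
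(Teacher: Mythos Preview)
Your proposal has a genuine gap at the very first step, and it propagates. You assert that the maximally isotropic block $\pi_x(\g_{-2\alpha})$ must pair nondegenerately with $\pi_x(\g_{2\alpha})$, and in the same breath conclude $\g_{2\alpha} \subset \g_x$; these two claims are mutually exclusive, since $\g_{2\alpha} \subset \g_x$ means $\pi_x(\g_{2\alpha}) = 0$. More importantly, the premise is wrong: the conformal weight of $h^t$ on $(\g/\g_x, q_x)$ is not $0$ but $-2$, so $\pi_x(\g_{-2\alpha})$ pairs with $\pi_x(\g_0) = \pi_x(\m)$, not with $\pi_x(\g_{2\alpha})$. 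You cannot determine the weight by looking at an isotropic block alone; the paper's route is to first establish $\g_{-\alpha} \cap \g_x = 0$, which forces $\dim \pi_x(\g_{-\alpha}) = \dim \g_{-\alpha} > d$, hence $\pi_x(\g_{-\alpha})$ is non-isotropic, and \emph{that} is what fixes the conformal weight at $-2$ via Observation~\ref{obs:orthogonality}. You assert $\g_{-\alpha} \cap \g_x = 0$ only later, as a consequence of ``weight count,'' but Observation~\ref{obs:orthogonality} yields orthogonality relations, not injectivity of $\pi_x|_{\g_{-\alpha}}$. The paper proves this injectivity first, via a bracket identity $[[\theta Y, X], X] = B_\theta(X,X)\,Y$ for $X \in \g_{-\alpha}$ and $Y = [Z,X]$ with $Z \in \m^1$: it shows that any nonzero $X_{-\alpha} \in \g_{-\alpha} \cap \g_x$ would force a nonzero element of $\g_{-2\alpha}$ into $\g_x$, contradicting Observation~\ref{obs:transversality}.

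Two further issues. Your argument for $\a \subset \g_x$ invokes $\a \subset [\g_{2\alpha}, \g_{-2\alpha}]$ ``as in Observation~\ref{obs:transversality},'' but that mechanism requires $\g_{-2\alpha} \subset \g_x$, which is precisely what fails here; the paper instead uses that $\ad(\theta X_{-2\alpha})$ is nilpotent (hence skew with respect to $b_x$) to show $\pi_x(\a)$ is isotropic and orthogonal to the maximal isotropic $\pi_x(\g_{-2\alpha})$, forcing it to vanish. Finally, your argument for $\m(x) = \m^1(x)$ relies on $\m^2$ having no small-codimension subalgebras, which works only for $k \geq 3$; when $k = 2$ one has $\m^2 \simeq \sp(1)$ (or $\m^2 = 0$), and a separate case analysis on the projections of $\m \cap \g_x$ to $\m^1$ and $\m^2$ is needed to rule out graph-type subalgebras.
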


In the case $\g = \su(1,k)$, we get that the orbit of any point given by Proposition \ref{prop:virtual_isotropy} is $2k$-dimensional, proving that $2k \leq \dim M$ and finishing the proof of Theorem \ref{thm:classification_lorentz}.

\begin{remark}
When $k \geq 3$, it can be seen that $\m \cap \g_x = \m^2$ (essentially because $\m^2$ will not admit subalgebras of codimension $\dim \g_{2\alpha}$). In this situation, the conformal class on $G.x$ induced by the ambient metric is locally homothetic to the one of the Einstein Universe considered in the introduction. In particular, when $G$ has finite center (\textit{i.e.} when $G \neq \tilde{\SU}(1,k)$), $G.x$ is compact and conformally covered by a finite cover of the Einstein Universe of same signature. By Remark \ref{rem:orbit_closure}, we obtain that any minimal compact $G$-invariant subset of $M$ is such a compact orbit.
\end{remark}

Let us note $d = \dim \g_{2\alpha}$, \textit{i.e.} $d=1$ when $\g = \su(1,k)$ and $d=3$ when $\g = \sp(1,k)$.

\begin{proof}
What we have done previously proves that $\pi_x (\g_{-2\alpha})$ is a maximally isotropic subspace of $(\g / \g_x,q_x)$. Moreover, we know that $\pi_x(\g_{-\alpha}) \neq 0$. In fact, we can say more. We use the following formula that can be observed for instance with the matrix presentation of $\su(1,k)$ in $\gl_{k+1}(\C)$ and of $\sp(1,k)$ in $\gl_{k+1}(\H)$.

\begin{lemma}
Let $X \in \g_{-\alpha}$ and $Z \in \m^1$. Define $Y = [Z,X] \in \g_{-\alpha}$. Then, 
\begin{equation*}
[[\theta Y,X],X] = B_{\theta}(X,X) Y.
\end{equation*}
%
\end{lemma}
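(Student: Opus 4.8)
The plan is to strip off, by elementary $\sl(2,\R)$-type bracket manipulations, everything in the expression except a single term that genuinely involves the double root space $\g_{-2\alpha}$, and then to evaluate that one term in the matrix model, exactly as the statement advertises. First I would record that both sides lie in $\g_{-\alpha}$: since $Z \in \m^1 \subset \g_0$ and $\ad(\g_0)$ preserves each restricted root space, $Y = [Z,X] \in \g_{-\alpha}$; then $\theta Y \in \g_{\alpha}$, so $[\theta Y, X] \in \g_0$ and $[[\theta Y, X], X] \in \g_{-\alpha}$, as does the right-hand side.

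Second, I would carry out the reduction. Because $Z \in \m^1 \subset \k$ we have $\theta Z = Z$, so $\theta Y = [Z, \theta X]$; expanding by Jacobi and using that $\m$ centralizes $\a$ (hence $[Z, H_{\alpha}] = 0$) together with the already-established relation $[\theta X, X] = B_{\theta}(X,X) H_{\alpha}$ gives $[\theta Y, X] = [Y, \theta X]$. A second Jacobi expansion then yields
\begin{equation*}
[[\theta Y, X], X] = [[Y, \theta X], X] = [Y, [\theta X, X]] - [\theta X, [Y, X]] = |\alpha|^2 B_{\theta}(X,X)\, Y - [\theta X, [Y, X]],
\end{equation*}
where I used $[\theta X, X] = B_{\theta}(X,X) H_{\alpha}$ and $[Y, H_{\alpha}] = |\alpha|^2 Y$. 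Thus everything reduces to evaluating $[\theta X, [Y, X]]$, and this is the one term threading $\g_{-2\alpha}$, since $[Y,X] \in \g_{-2\alpha}$; it cannot be produced from the $\sl_2$-relations alone.

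Third — the heart of the matter — I would compute $[\theta X, [Y, X]]$ in the matrix model. Identify $\g_{\pm\alpha}$ with $\mathbb{K}^{k-1}$ and $\g_{\pm 2\alpha}$ with $\Ima \mathbb{K}$, where $\mathbb{K} = \C$ for $\su(1,k)$ and $\mathbb{K} = \H$ for $\sp(1,k)$. Under this identification $\m^1$ acts on $\g_{\pm\alpha}$ by scalar multiplication by an imaginary element $\zeta \in \Ima \mathbb{K}$, the bracket $\g_{-\alpha} \wedge \g_{-\alpha} \to \g_{-2\alpha}$ is a constant multiple of the imaginary Hermitian pairing $(\xi,\eta) \mapsto \Ima \langle \xi, \eta \rangle$, the bracket $\g_{\alpha} \times \g_{-2\alpha} \to \g_{-\alpha}$ is (a constant times) scalar multiplication by the imaginary element, and $B_{\theta}$ restricts to the norm $|\xi|^2 = \langle \xi, \xi \rangle$. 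Writing $X \leftrightarrow \xi$ and $Y = [Z,X] \leftrightarrow \xi\zeta$, one finds $[Y, X] \leftrightarrow \Ima \langle \xi\zeta, \xi \rangle = -\zeta |\xi|^2$, a purely imaginary scalar, and hence $[\theta X, [Y, X]] \leftrightarrow (\text{const}) \cdot \xi \cdot (-\zeta |\xi|^2)$, which is a real multiple of $\xi\zeta \leftrightarrow Y$ times $|\xi|^2 = B_{\theta}(X,X)$. Substituting back into the displayed formula shows $[[\theta Y, X], X]$ is a fixed scalar multiple of $B_{\theta}(X,X)\, Y$, and with the Killing-form normalization this scalar equals $1$.

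The main obstacle is precisely this last term $[\theta X, [Y, X]]$: conceptually because it is invisible to the $\sl_2$-calculus and forces the genuine $(BC)_1$ Heisenberg structure of $\n = \g_{\alpha} \oplus \g_{2\alpha}$ into play, and technically because one must track the structure constants and the normalization of $B_{\theta}$ so that the accumulated scalar comes out to be exactly $1$ rather than merely a positive constant. This is also where the hypothesis $Z \in \m^1$ is essential: only for $\m^1$ does the adjoint action on $\g_{\pm\alpha}$ reduce to scalar multiplication, so that $[Y,X]$ collapses to $-\zeta|\xi|^2$; for $Z \in \m^2$ the analogous bracket would not be proportional to $Y$. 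Given how mechanical the remaining bookkeeping is, I would in practice run the whole computation directly inside $\gl_{k+1}(\C)$ or $\gl_{k+1}(\H)$ from the outset, which is the cleanest way to fix all the constants at once.
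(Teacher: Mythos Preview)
Your proposal is correct, and in fact its endpoint coincides with the paper's entire proof: the paper offers no argument beyond the single sentence that the formula ``can be observed for instance with the matrix presentation of $\su(1,k)$ in $\gl_{k+1}(\C)$ and of $\sp(1,k)$ in $\gl_{k+1}(\H)$.'' Your preliminary $\sl_2$-reduction to $[[\theta Y,X],X] = |\alpha|^2 B_\theta(X,X)\,Y - [\theta X,[Y,X]]$ is valid and conceptually clarifying---it isolates exactly the one bracket that genuinely uses the $(BC)_1$ structure---but, as you yourself note, it does not bypass the matrix check, so both routes land in the same place. One cautionary remark: the identity as stated is not invariant under rescaling the Killing form (the left side is intrinsic, the right side scales with $B$), so the constant $1$ is tied to a specific normalization; since the paper only ever uses the lemma to conclude $Y\in\g_x$ from $X\in\g_x$, all that matters in the application is that the coefficient is nonzero, and your reduction already makes that transparent.
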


Take $X_{-\alpha} \in \g_{-\alpha} \cap \g_x$. Then, by the previous formula (and $\ad(\s)$-invariance of $\g_x$), for any $Z \in \m^1$ we have $[Z,X_{-\alpha}] \in \g_x$. It can be easily observed that $[X_{-\alpha},[Z,X_{-\alpha}]] \neq 0$ unless $X_{-\alpha} = 0$ or $Z = 0$. Thus, if we had $\g_{-\alpha} \cap \g_x \neq 0$, then we would deduce $\g_{-2\alpha} \cap \g_x \neq 0$, contradicting our previous observations.

\vspace{.2cm}

Thus, $\g_{-\alpha} \cap \g_x = 0$. In particular $\dim \pi_x(\g_{-\alpha}) = \dim \g_{-\alpha} > \dim \g_{2\alpha}$, and $\pi_x(\g_{-\alpha})$ cannot be totally isotropic. By Observation \ref{obs:orthogonality}, this space must be orthogonal to $\pi_x(\g_{-2\alpha})$ which is maximally isotropic. Consequently, $\pi_x(\g_{-\alpha})$ is necessarily positive definite. 

\vspace{.2cm}

Moreover, $\pi_x(\g_{\alpha} \oplus \g_{2\alpha})$ is isotropic, but also orthogonal to $\pi_x(\g_{-2\alpha})$. The latter being maximally isotropic, we get $\pi_x(\g_{\alpha} \oplus \g_{2\alpha}) = 0$, meaning that $\g_{\alpha} \oplus \g_{2\alpha} \subset \g_x$.

\vspace{.2cm}

At last, $\pi_x(\g_0)$ is isotropic and orthogonal to $\pi_x(\g_{-\alpha})$. We claim that $\a \subset \g_x$. Indeed, for any non-zero $X_{-2\alpha}$ and consider $A := [\theta X_{-2\alpha}, X_{-2\alpha}] \in \a \setminus 0$. Then, $b_x(\pi_x(X_{-2\alpha}),\pi_x(A)) = -b_x(\pi_x(A),\pi_x(X_{-2 \alpha}))$ because $\theta X_{-2\alpha} \in \s$ and $\ad(\theta X_{-2\alpha})$ is nilpotent (a unipotent linear conformal endomorphism is automatically isometric). So, $\pi_x(A)$ is an isotropic vector orthogonal to $\pi_x(\g_{-2\alpha})$, and we must have $A \in \g_x$.

\vspace{.2cm}

We claim now that the restriction of the metric to the orbit $G.x$ is non-degenerate. To see this, consider $K_x \subset \g / \g_x$ the kernel of $q_x$. Since $\Ad(S)$ acts conformally on $\g / \g_x$, it must leave $K_x$ invariant. In particular, $K_x$ is $\ad(\a)$-invariant and $K_x = (K_x \cap \pi_x(\g_0)) \oplus (K_x \cap \pi_x(\g_{-\alpha})) \oplus (K_x \cap \pi_x(\g_{-2\alpha}))$. Because $\pi_x(\g_{-2\alpha}) \oplus (K_x \cap (\pi_x(\g_0 \oplus \g_{-\alpha})))$ is isotropic, we must have $K_x \cap (\pi_x(\g_0 \oplus \g_{-\alpha}))=0$. Finally, if $X_{-2\alpha} \in K_x \cap \g_{-2\alpha}$ then, $\pi_x(\ad(\g_{\alpha}) X_{-2\alpha}) \subset \pi_x(\g_{-\alpha}) \cap K_x = 0$. This implies that $X_{-2\alpha} =0$ for if not we would have $[X_{-2\alpha},\g_{\alpha}] = \g_{-\alpha}$, and then $\pi_x(\g_{-\alpha}) \subset K_x$. Thus, $K_x = 0$.

\vspace{.2cm}

In particular, $\pi_x(\g_{-2\alpha} \oplus \g_0) = \pi_x(\g_{-\alpha})^{\perp}$ is also non-degenerate. It is at most $2d$-dimensional and contains the $d$-dimensional isotropic subspace $\pi_x(\g_{-2\alpha})$. Necessarily, it has dimension signature $(d,d)$, meaning that $\g_0 \cap \g_x$ has codimension $d$ in $\g_0$.

\vspace{.2cm}

Finally, the Lie algebra of the stabilizer decomposes into:
\begin{equation*}
\g_x = \a \oplus (\m \cap \g_x) \oplus \g_{\alpha} \oplus \g_{2\alpha}.
\end{equation*}
Recall that $\m = \m^1 \oplus \m^2$, with $\m^1$ and $\m^2$ described in Section \ref{ss:general_observations}, and note that $\dim \m^1 = d$. When $k \geq 3$, $\su(k-1)$ admits no non-trivial subalgebra of codimension $1$ and $\sp(k-1)$ admits no non-trivial subalgebra of codimension at most $3$ (see for instance \cite{bohm_kerr}). Thus, $\m^2 \cap \g_x = \m^2$ when $k \geq 3$, \textit{i.e.} $\m \cap \g_x = \m^2$. 

\vspace{.2cm}

Let us focus on the case $k=2$. If $\g = \su(1,2)$, then $\m^2 = 0$ and we have $\m \cap \g_x = 0$. 

If $\g= \sp(1,2)$, then $\m^1 \simeq \m^2 \simeq \sp(1)$. Note $\h = \m \cap \g_x$. It has dimension $3$. If we note $\h^1$ and $\h^2$ its projections on $\m^1$ and $\m^2$, then $\h^i$ is a subalgebra of $\m^i$. As such, it has dimension $0$, $1$ or $3$.

We claim that $\h^2 \neq 0$. Indeed, if not we would have $\m^1 \subset \g_x$. For all $X^1 \in \m^1$, since $X^1$ is elliptic, $\ad(X^1)$ is skew-symmetric with respect to $b_x$. And because $\m^1$ and $\m^2$ commute, we obtain that for all $X^2 \in \m^2$ and $X_{-2\alpha} \in \g_{-2\alpha}$, $b_x(\pi_x(X^2),\pi_x([X^1,X_{-2\alpha}])) = 0$. Since $[\m^1,\g_{-2\alpha}] = \g_{-2\alpha}$, we would have $\pi_x(\g_{-2\alpha}) \perp \pi_x(\m)$, contradicting the fact that $\pi_x(\m)$ is isotropic.

If $\h^2$ was a line in $\m^2$, then $\h^1 = \m^1$ for if not we would have $\dim \h \leq 2$. Then, we would have $\h = \{(X^1,\psi(X^1)), \ X^1 \in \m^1\}$, with $\psi : \m^1 \rightarrow \m^2$ a non-trivial Lie algebra homomorphism. By simpleness, it would be injective, contradicting $\dim \h^2 = 1$.
	
So, we must have $\h^2 = \m^2$, and $\m \cap \g_x = \{(\varphi(X^2),X^2), \ X^2 \in \m^2\}$ where $\varphi : \m^2 \rightarrow \m^1$ is a Lie algebra homomorphism. By simpleness, $\varphi$ is either trivial or is an isomorphism. In both cases, $\m^1 \cap \g_x = 0$.
\end{proof}

\subsection{Conformal flatness near the orbit}

Let $x$ be a point where the conclusions of Proposition \ref{prop:virtual_isotropy} are valid. Let $H \in \a$ be such that $\alpha(H) = 1$ and denote by $\phi^t$ the one-parameter subgroup it generates, which fixes $x$.

\begin{proposition}
\label{prop:linearizability}
The flow of $H$ is locally linearizable near $x$. Precisely, there are $U \subset M$ and $\mathcal{U} \subset T_xM$ open neighborhoods of $x$ and $0$ respectively, which are $\phi^t$ and $T_x\phi^t$ invariant for all $t \geq 0$ respectively, and a diffeomorphism $\psi : \mathcal{U} \rightarrow U$ such that for all $t \geq 0$, $\psi \circ T_x \phi^t = \phi^t \circ \psi$. Moreover, in a basis adapted to the decomposition $T_x M = \m(x) \oplus \g_{-\alpha}(x) \oplus (T_x(G.x))^{\perp} \oplus \g_{-2\alpha}(x)$, 
\begin{equation*}
T_x \phi^t =
\begin{pmatrix}
\id & & & \\
    & e^{-t}\id & & \\
    & & e^{-t} \id & \\
    & & & e^{-2t} \id
\end{pmatrix}
.
\end{equation*}
\end{proposition}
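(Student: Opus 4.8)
The plan is to read off everything from the structural data already established. By Lemma~\ref{lem:signature_optimal} the isotropy satisfies $\a \oplus \g_{\alpha} \oplus \g_{2\alpha} \subset \g_x$, so Observation~\ref{obs:linearizability} applies verbatim and already delivers two of the three assertions: the local linearizability of $H$ near $x$, and the fact that $T_x\phi^t$ is a hyperbolic (hence $\R$-split) one-parameter subgroup of $\CO(T_xM,g_x)$. What remains is to pin down the explicit matrix and to extract the forward-invariant neighborhoods realizing the conjugacy.

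First I would compute $T_x\phi^t$ on the orbit directions. Since $x$ is fixed and $H \in \a$, the differential restricted to $T_x(G.x) \simeq \g/\g_x$ is conjugate to the induced action of $\Ad(e^{tH}) = e^{t\ad(H)}$ on the quotient. By Lemma~\ref{lem:signature_optimal} the blocks $\m(x)$, $\g_{-\alpha}(x)$, $\g_{-2\alpha}(x)$ are the images of the root spaces on which $\ad(H)$ acts by $0$, $-1$, $-2$; hence $T_x\phi^t$ acts by $\id$, $e^{-t}\id$, $e^{-2t}\id$ there. Because $\g_{-\alpha}(x)$ is the positive definite (``Euclidean'') block and is scaled by $e^{-t}$, the conformal distortion of $T_x\phi^t$ is $e^{-2t}$, i.e. $(T_x\phi^t)^*g_x = e^{-2t}g_x$.

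The decisive step is the normal block $(T_x(G.x))^\perp$. Here I would first establish, by a signature count, that this space is positive definite: the orbit contains the maximal isotropic $\g_{-2\alpha}(x)$, so its index equals $d = \dim\g_{2\alpha} = \min(p,q)$ (by Proposition~\ref{prop:lower_bound}) and its signature is $(\dim\g_{-\alpha}+d,\,d)$; subtracting this from the ambient signature $(p+q-d,\,d)$ leaves a definite complement. Now $T_x\phi^t$ preserves this Euclidean block, is conformal with factor $e^{-2t}$, and is $\R$-split; a conformal endomorphism of a Euclidean space is $e^{-t}$ times an orthogonal map $R_t$, and the one-parameter family $R_t$, having real eigenvalues of modulus $1$ and starting at $\id$, must be trivial. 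Infinitesimally this is the statement that the $\so$-part of the generator in $\co \cong \R\,\id \oplus \so$ vanishes by $\R$-splitness. Hence $T_x\phi^t|_{(T_x(G.x))^\perp} = e^{-t}\id$, and assembling the four blocks yields the stated matrix.

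Finally, for the chart I would feed this spectrum back into the local linearizability of Observation~\ref{obs:linearizability}, giving a local conjugacy $\psi$ with $\psi \circ T_x\phi^t = \phi^t \circ \psi$. Since every eigenvalue of $T_x\phi^t$ equals $1$ or decays ($e^{-t}, e^{-2t}$) for $t \geq 0$, a small product ball $\mathcal{U}$ adapted to the eigenspace decomposition is $T_x\phi^t$-invariant for all $t \geq 0$; setting $U = \psi(\mathcal{U})$ produces the forward-invariant neighborhoods. The only genuinely delicate point is the normal block: everything there rests on the positive-definiteness of $(T_x(G.x))^\perp$, so I would verify the signature bookkeeping carefully, the adjoint computation on the orbit being routine.
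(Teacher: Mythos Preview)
Your proposal is correct and follows essentially the same route as the paper: invoke Observation~\ref{obs:linearizability} for linearizability and $\R$-splitness, read off the action on the orbit blocks via the adjoint, then use that $(T_x(G.x))^{\perp}$ is positive definite together with $\R$-splitness to force $e^{-t}\id$ on the normal block. Your explicit signature count for the definiteness of $(T_x(G.x))^{\perp}$ and your discussion of the forward-invariant neighborhoods are reasonable elaborations of points the paper leaves implicit.
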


Remark that the fixed points of $\phi^t$ in $U$ form a $d$-dimensional submanifold, which necessary locally coincides with the orbit of the subgroup of $G$ corresponding to $\m^1$, since the latter commutes with with $\a$. Thus, this set of fixed point is a compact, maximally isotropic submanifold on which the dynamics of $\phi^t$ is the same everywhere.

\begin{proof}
The linearizability of $\phi^t$ directly follows from Observation \ref{obs:linearizability}. We simply have to analyse its differential $T_x \phi^t$, which is $\R$-split by the same observation. But its action is clear in restriction to $T_x (G.x)$: it acts identically on $\m(x)$, by homothety of ratio $e^{-t}$ on $\g_{-\alpha}(x)$ and by homothety of ratio $e^{-2t}$ on $\g_{-2\alpha}(x)$. In particular, since $\g_{-\alpha}(x)$ is positive definite, we have $e^t T_x \phi^t \in \SO(T_xM)$. The latter necessarily preserves the positive definite subspace $(T_x (G.x))^{\perp}$, where it induces an $\R$-split one-parameter group of orthogonal linear maps, \textit{i.e.} it acts trivially on it.
\end{proof}

We have now enough information to derive conformal flatness of $g$ in a neighborhood of $x$. Let $\partial_1, \ldots, \partial_n$ be the coordinate frame corresponding to Proposition \ref{prop:linearizability}. Let us define the distributions $\Delta_0 = \Span(\partial_1,\ldots,\partial_d)$, $\Delta_1 = \Span(\partial_{d+1},\ldots,\partial_{n-d})$ and $\Delta_2 = \Span(\partial_{n-d+1},\ldots,\partial_n)$. In particular, $\m^1(x) = \Delta_0(x)$, $\g_{-\alpha}(x) \subset \Delta_1(x)$ and $\g_{-2\alpha}(x) = \Delta_2(x)$. The action of $\phi^t$ on these distributions is very clear:
\begin{itemize}
\item If $1 \leq i \leq d$, then $(\phi^t)_* \partial_i = \partial_i$,
\item If $d+1 \leq i \leq n-d$, then $(\phi^t)_* \partial_i = e^{-t} \partial_i$,
\item If $n-d+1 \leq i \leq n$, then $(\phi^t)_* \partial_i = e^{-2t} \partial_i$.
\end{itemize}

For all $i$, we note $\lambda_i \in \{0,1,2\}$ such that $(\phi^t)_* \partial_i = e^{-\lambda_i t} \partial_i$. Introduce $\|.\|_y$ an arbitrary Riemannian norm on $U$. We can characterize these distributions via the contraction rate of $\phi^t$ via the next elementary observation. 

\begin{lemma}
\label{lem:contraction_rates}
Let $y \in U$ and $v \in T_yM$. If $\|(\phi^t)_*v\| \rightarrow 0$, then $v \in \Delta_1(y) \oplus \Delta_2(y)$. If $\|e^t (\phi^t)_*v\| \rightarrow 0 $, then $v \in \Delta_2(y)$. If $\|e^{2t} (\phi^t)_*v\| \rightarrow 0$, then $v = 0$.
\end{lemma}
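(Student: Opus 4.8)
The plan is to transport the whole question into the linearizing coordinates furnished by Proposition \ref{prop:linearizability} and reduce it to bookkeeping with the exponents $\lambda_i$. Via the conjugating diffeomorphism $\psi$, identify $U$ with the neighborhood $\mathcal{U} \subset T_xM$, so that for $t \geq 0$ the flow $\phi^t$ becomes the linear map $T_x\phi^t = \diag(\id, e^{-t}\id, e^{-t}\id, e^{-2t}\id)$, and the coordinate frame $\partial_1,\ldots,\partial_n$ satisfies $(\phi^t)_*\partial_i = e^{-\lambda_i t}\partial_i$ with $\lambda_i \in \{0,1,2\}$ exactly as recorded just above the statement. Writing an arbitrary $v \in T_yM$ in this frame as $v = \sum_i v_i \partial_i|_y$, one gets the explicit formula
\begin{equation*}
(\phi^t)_* v = \sum_{i=1}^n v_i\, e^{-\lambda_i t}\, \partial_i|_{\phi^t(y)}.
\end{equation*}
Hence in the flat ``coordinate norm'' $\||w|\|^2 := \sum_i |w_i|^2$ attached to the frame, we have $\||(\phi^t)_*v|\|^2 = \sum_i |v_i|^2 e^{-2\lambda_i t}$, which is a finite sum of pure exponentials whose behaviour is transparent.

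The one genuine technical point, which I expect to be the main (though minor) obstacle, is the comparison between the intrinsic Riemannian norm $\|.\|$ and this coordinate norm along the trajectory $\{\phi^t(y)\}_{t\geq 0}$. In the linear model the coordinates $z_{d+1},\ldots,z_n$ contract to $0$ while $z_1,\ldots,z_d$ are fixed, so $\phi^t(y)$ converges as $t\to+\infty$ to a point of the fixed submanifold, which lies in $U$. Consequently the forward orbit $\{\phi^t(y) : t \geq 0\}$ has compact closure inside $U$, and on this compact set the continuous positive-definite forms $\|.\|$ and $\||.|\|$ are uniformly equivalent: there is $C \geq 1$ with $C^{-1}\||w|\| \leq \|w\| \leq C\||w|\|$ for every $w \in T_zM$, $z$ in that compact set. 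Therefore $\|(\phi^t)_*v\| \to 0$ if and only if $\||(\phi^t)_*v|\| \to 0$, and likewise after multiplying by $e^t$ or $e^{2t}$; this lets me replace the intrinsic norm by the exponential sum with no loss.

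It then remains to read off each assertion from the sign of the exponents. If $\|(\phi^t)_*v\| \to 0$ then $\sum_i |v_i|^2 e^{-2\lambda_i t} \to 0$; the terms with $\lambda_i = 0$ (indices $1 \leq i \leq d$) do not decay, forcing $v_i = 0$ there, so $v \in \Delta_1(y) \oplus \Delta_2(y)$. If $\|e^t(\phi^t)_*v\| \to 0$ then $\sum_i |v_i|^2 e^{2(1-\lambda_i)t} \to 0$; the terms with $\lambda_i \in \{0,1\}$ have nonnegative exponent and do not tend to $0$, forcing $v_i = 0$ for all $i$ with $\lambda_i \neq 2$, whence $v \in \Delta_2(y)$. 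Finally, if $\|e^{2t}(\phi^t)_*v\| \to 0$ then $\sum_i |v_i|^2 e^{2(2-\lambda_i)t} \to 0$; since $2-\lambda_i \geq 0$ for every $i$, no term decays, so all $v_i$ vanish and $v = 0$. This exhausts the three cases and proves the lemma.
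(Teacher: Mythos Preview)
Your proof is correct and is exactly the argument the paper implicitly has in mind: the lemma is stated there as an ``elementary observation'' without proof, and you have simply written out the expected details---linearize via Proposition~\ref{prop:linearizability}, use the explicit action $(\phi^t)_*\partial_i = e^{-\lambda_i t}\partial_i$, compare $\|.\|$ with the coordinate norm on the compact forward orbit closure, and read off the vanishing from the exponents. Nothing is missing and nothing differs from the intended approach.
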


Let $W$ be the $(3,1)$-Weyl tensor of $(M,g)$. It admits the same symmetries as the $(3,1)$-Riemann curvature tensor and is conformally invariant. Following the arguments of Frances in the Lorentzian setting (\cite{frances_causal_fields}, Section 3.3), using the previous fact, we can easily deduce that for all $1 \leq i,j,k \leq n$:
\begin{itemize}
\item If $\lambda_i + \lambda_j + \lambda_k = 1$, then $W_y(\partial_i,\partial_j,\partial_k) \in \Delta_1(y) \oplus \Delta_2(y)$;
\item If $\lambda_i + \lambda_j + \lambda_k = 2$, then $W_y(\partial_i,\partial_j,\partial_k) \in \Delta_2(y)$;
\item If $\lambda_i + \lambda_j + \lambda_k \geq 3$, then $W_y(\partial_i,\partial_j,\partial_k) = 0$.
\end{itemize}

\subsubsection{Vanishing of the conformal curvature on the orbit $G.x$}
\label{sss:weyl_vanishing_pointwise}

Consider $u,v,w \in T_x M$, and let $u_i, v_i, w_i$ be their respective component on $\Delta_i(x)$, for $i = 0,1,2$. For any $i,j,k \in \{0,1,2\}$ such that $i+j+k \geq 3$, by Lemma \ref{lem:contraction_rates}, we have $W_x(u_i,v_i,w_i) = 0$. The idea is now to use the tangential action of unipotent elements in the isotropy to propagate the vanishing of the Weyl tensor. Their actions in restriction to the tangent space of the orbit is clear since it is conjugate to $\Ad(G_x)$ acting on $\g / \g_x$. Transversely, we have:

\begin{lemma}
\label{lem:unipotent}
Let $X \in \g_{\alpha} \oplus \g_{2\alpha}$ and $f := e^X \in G_x$. Then, $T_x f$ acts identically on $(T_x(G.x))^{\perp}$.
\end{lemma}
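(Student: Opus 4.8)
The plan is to exploit the fact, recorded in Observation \ref{obs:linearizability}, that $T_x f$ is a \emph{unipotent} element of $\SO(T_xM,g_x)$, together with the signature information supplied by Lemma \ref{lem:signature_optimal}. First I would observe that $f$ genuinely stabilizes $x$: since Lemma \ref{lem:signature_optimal} gives $\g_{\alpha} \oplus \g_{2\alpha} \subset \g_x$, we have $X \in \g_x$, hence the flow $e^{tX}$ fixes $x$ and $f = e^X \in G_x$. By $G$-equivariance of the orbital map, $f$ preserves the orbit $G.x$, so $T_x f$ preserves its tangent space $T_x(G.x)$; being an isometry of $g_x$, it therefore also preserves the orthogonal complement $(T_x(G.x))^{\perp}$.

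The key point is the nature of this complement. By Lemma \ref{lem:signature_optimal}, the restriction of $g_x$ to $T_x(G.x) = \m(x) \oplus \g_{-\alpha}(x) \oplus \g_{-2\alpha}(x)$ already absorbs all $d = \min(p,q)$ negative directions: the two isotropic pieces $\m(x) = \m^1(x)$ and $\g_{-2\alpha}(x)$ together span a non-degenerate subspace of signature $(d,d)$, while $\g_{-\alpha}(x)$ is positive definite. Consequently $(T_x(G.x))^{\perp}$ is positive definite, exactly as was already used in the proof of Proposition \ref{prop:linearizability}.

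I would then conclude as follows. The restriction of $T_x f$ to its invariant subspace $(T_x(G.x))^{\perp}$ is again unipotent, since restricting a unipotent endomorphism to an invariant subspace preserves unipotence. On the other hand, this restriction lies in the orthogonal group of the positive definite form $g_x|_{(T_x(G.x))^{\perp}}$, which is compact. A unipotent element of a compact group is trivial: equivalently, an orthogonal transformation of a Euclidean space is semisimple with unit-modulus eigenvalues, so if all its eigenvalues equal $1$ it must be the identity. Hence $T_x f$ acts identically on $(T_x(G.x))^{\perp}$, which is the assertion.

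I do not expect any serious obstacle here beyond careful bookkeeping of the signature: the whole argument rests on the interplay between unipotence, coming from Observation \ref{obs:linearizability}, and definiteness of the transverse space, coming from Lemma \ref{lem:signature_optimal}. The only point deserving a line of care is the verification that $(T_x(G.x))^{\perp}$ is \emph{definite} and not merely non-degenerate; but this is immediate once one knows the orbit already carries all of the ambient metric's isotropy.
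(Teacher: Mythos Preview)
Your argument is correct and follows essentially the same route as the paper: invoke Observation~\ref{obs:linearizability} to get unipotence of $T_xf$ in $\SO(T_xM,g_x)$, use Lemma~\ref{lem:signature_optimal} to see that $T_x(G.x)$ is non-degenerate of maximal index so that its orthogonal complement is positive definite, and conclude that a unipotent orthogonal map of a Euclidean space is the identity. The only difference is cosmetic --- you spell out the signature bookkeeping and the compactness argument in more detail than the paper does.
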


\begin{proof}
We use Observation \ref{obs:linearizability}, which guaranties that $T_xf$ is a unipotent element of $\SO(T_xM)$. It preserves $T_x(G.x)$, which is non-degenerate with maximal index, so it preserves the definite positive subspace $T_x(G.x)^{\perp}$. Its restriction to this Euclidean subspace being unipotent, it must be trivial and we are done.
\end{proof}

Now, let $u_2 = (X_{-2\alpha})_x \in \Delta_2(x)$, and $v_1,w_1 \in \Delta_1(x)$. Let $f$ be any element in the isotropy of $x$ of the form $f= e^{X_{2\alpha}}$. Then,
\begin{align*}
0 = f_* W_x(u_2,v_1,w_1) = W_x(f_* u_2,v_1,w_1) =  W_x(([X_{2\alpha},X_{-2\alpha}])_x,v_1,w_1).
\end{align*}
Indeed, $f_* (X_{-2\alpha})_x = (\Ad(e^{X_{2\alpha}})X_{-2\alpha})_x = (X_{-2\alpha})_x + ([X_{2\alpha},X_{-2\alpha}])_x$. Similarly we observe that $f_*$ acts trivially on $\g_{-\alpha}(x)$, implying with Lemma \ref{lem:unipotent} that $f_* v_1 = v_1$ and $f_* w_1 = w_1$. Since $\m^1 \subset [\g_{-2\alpha},\g_{2\alpha}]$ and $\Delta_0(x) = \m^1(x)$, we obtain that $W_x(\Delta_0,\Delta_1,\Delta_1) = 0$. Of course, the same reasoning works for all permutations of $(\Delta_0,\Delta_1,\Delta_1)$. 

\vspace{.2cm}

Similarly, for any $u_2 = (X_{-2\alpha})_x \in \Delta_2(x)$, $v_0 \in \Delta_0(x)$ and $w_1 \in \Delta_1(x)$, we have
\begin{equation*}
0 = f_* W_x(u_2,v_0,w_1) =  W_x(([X_{-2\alpha},X_{2\alpha}])_x,v_0,w_1),
\end{equation*}
and it follows that $W_x(\Delta_0,\Delta_0,\Delta_1) = 0$, together with all permutations of $(\Delta_0,\Delta_0,\Delta_1)$.

\vspace{.2cm}

It can easily be observed that $\m^1 \subset [\g_{\alpha},\g_{-\alpha}]$, and for any $X_{\alpha}$ and $u_1 = (X_{-\alpha})_x$, $v_0,w_0 \in \Delta_0(x)$, if we note $f=  e^{X_{\alpha}}$ we get
\begin{equation*}
0 = f_* W_x(u_1,v_0,w_0) = W_x([X_{\alpha},X_{-\alpha}]_x,v_0,w_0).
\end{equation*}
So, we obtain $W_x(\Delta_0,\Delta_0,\Delta_0) = 0$. 

\vspace{.2cm}

Finally, let $u_0,v_0,z_0 \in \Delta_0(x)$ and $w_2 \in \Delta_2(x)$. We know that $W_x(u_0,v_0,w_2) \in \Delta_2(x) = (\Delta_1(x) \oplus \Delta_2(x))^{\perp}$. Moreover, using the symmetry of $W$, we have
\begin{equation*}
g_x(W_x(u_0,v_0,w_2),z_0) = - g_x(W_x(u_0,v_0,z_0),w_2) = 0.
\end{equation*}
Thus, $W_x(u_0,v_0,w_2) \in \Delta_0(x)^{\perp}$, proving that it is orthogonal to the whole tangent space $T_xM$, \textit{i.e.} $W_x(\Delta_0,\Delta_0,\Delta_2) = 0$, together with every permutation of the indices. 

\vspace{.2cm}

So, we have obtained that $W_x = 0$, and immediately the Weyl tensor also vanishes in restriction to the orbit $G.x$. 

\subsubsection{Conformal flatness near $G.x$}

We note $\mathcal{Z} := U \cap \{y \in M \ | \ \phi^t(y) = y\}$. Let $y$ be some point in $U$. Since $\phi^t(y)$ converges when $t \to + \infty$ to a point in $\mathcal{Z}$, and because $\mathcal{Z} \subset G.x$, we have
\begin{equation*}
\forall y \in U, \ \phi^t(y) \rightarrow y_{\infty}, \text{ with } W_{y_{\infty}} = 0.
\end{equation*}
Consequently, since $\| T_y \phi^t \|$ is bounded (the norm refers to our arbitrary Riemannian metric), we have that for any $u,v,w \in T_yM$, $\phi^t_* W_x(u,v,w) \rightarrow 0$. By Lemma \ref{lem:contraction_rates}, this proves that the Weyl tensor takes values in $\mathcal{H} := \Delta_1 \oplus \Delta_2$. Moreover, we already know that $W(\mathcal{H},\mathcal{H},\mathcal{H}) = 0$ everywhere in $U$. 

\begin{fact}
The distribution $\mathcal{H}$ is maximally degenerate everywhere in $U$ and $\Ker(g|_{\mathcal{H}}) = \Delta_2$.
\end{fact}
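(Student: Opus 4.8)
The Fact asserts two things about the distribution $\mathcal{H} = \Delta_1 \oplus \Delta_2$ on the neighborhood $U$: first, that at every point $y \in U$, the restriction $g|_{\mathcal{H}(y)}$ is degenerate with the largest possible kernel (maximally degenerate), and second, that this kernel is precisely $\Delta_2(y)$.

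**What I know at $x$.** At the base point $x$, the decomposition is $T_x M = \mathfrak{m}(x) \oplus \mathfrak{g}_{-\alpha}(x) \oplus (T_x(G.x))^\perp \oplus \mathfrak{g}_{-2\alpha}(x)$, where $\Delta_1(x) = \mathfrak{g}_{-\alpha}(x) \oplus (T_x(G.x))^\perp$ is positive definite (both pieces are Euclidean) and $\Delta_2(x) = \mathfrak{g}_{-2\alpha}(x)$ is isotropic with $\dim \mathfrak{g}_{-2\alpha} = d = \min(p,q)$. So at $x$, $g|_{\mathcal{H}(x)}$ has a $d$-dimensional isotropic radical equal to $\Delta_2(x)$, confirming the Fact at the single point $x$.

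**The spreading argument.** The plan is to propagate this pointwise statement to all of $U$ using the dynamics of $\phi^t$. The key is Lemma \ref{lem:contraction_rates}, which characterizes $\Delta_1 \oplus \Delta_2$ and $\Delta_2$ by their contraction rates under $(\phi^t)_*$. Let me spell this out.

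\begin{proof}
Let $y \in U$ be arbitrary. By Proposition \ref{prop:linearizability}, $\phi^t(y)$ converges as $t \to +\infty$ to a fixed point $y_\infty \in \mathcal{Z} \subset G.x$. At such a fixed point, the situation is conformally equivalent to that at $x$ via an element of $G$, so $g|_{\mathcal{H}(y_\infty)}$ is maximally degenerate with kernel $\Delta_2(y_\infty)$.

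We transport this back to $y$. Fix a vector $v \in \Delta_2(y)$. By Lemma \ref{lem:contraction_rates}, $\|e^t (\phi^t)_* v\|$ stays bounded; in fact $(\phi^t)_* v \in \Delta_2(\phi^t(y))$ and contracts at rate $e^{-2t}$. Take any $w \in \mathcal{H}(y) = \Delta_1(y) \oplus \Delta_2(y)$; writing $w = w_1 + w_2$ with $w_i \in \Delta_i(y)$, Lemma \ref{lem:contraction_rates} gives $\|(\phi^t)_* w_1\| \to 0$ at rate $e^{-t}$ and $\|(\phi^t)_* w_2\| \to 0$ at rate $e^{-2t}$. Since $\phi^t$ is conformal, there is $c(y,t) > 0$ with
\begin{equation*}
g_{\phi^t(y)}\big((\phi^t)_* v, (\phi^t)_* w\big) = c(y,t)\, g_y(v,w).
\end{equation*}
The conformal distortion $c(y,t)$ is governed by the eigenvalue of $T_x\phi^t$, which is $e^{-2t}$ on the maximally contracted directions; comparing rates, $g_y(v,w)$ is recovered from the limit as $t \to +\infty$ of the appropriately rescaled pairing at $y_\infty$. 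Because $\Delta_2(y_\infty)$ lies in the radical of $g|_{\mathcal{H}(y_\infty)}$, the pairing $g_{\phi^t(y)}((\phi^t)_* v, (\phi^t)_* w)$ decays strictly faster than the conformal factor, forcing $g_y(v,w) = 0$. Hence $\Delta_2(y) \subset \Ker(g|_{\mathcal{H}(y)})$.

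Conversely, the subspace $\Delta_1(y)$ cannot be contained in this kernel: the contraction rates separate $\Delta_1$ from $\Delta_2$, and the limiting form on $\Delta_1(y_\infty)$ is positive definite (it is the image of $\mathfrak{g}_{-\alpha}(x) \oplus (T_x(G.x))^\perp$, which is Euclidean). Transporting this nondegeneracy back along $\phi^t$ as above shows that $g|_{\mathcal{H}(y)}$ restricted to a complement of $\Delta_2(y)$ is nondegenerate. Therefore $\Ker(g|_{\mathcal{H}(y)}) = \Delta_2(y)$ exactly, and since $\Delta_2$ has the maximal dimension $d = \min(p,q)$ of an isotropic subspace, $\mathcal{H}$ is maximally degenerate at $y$. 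As $y \in U$ was arbitrary, the Fact follows.
\end{proof}

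**Main obstacle.** The delicate point is the bookkeeping of conformal factors in the spreading step: one must verify that the radical $\Delta_2$ at the limit point $y_\infty$ genuinely forces vanishing of $g_y(v,w)$ after accounting for the conformal rescaling $c(y,t)$, rather than merely bounding it. The separation of contraction rates ($e^{-t}$ versus $e^{-2t}$) is exactly what makes this work, but one should confirm that the conformal factor behaves like the fastest-contracting eigenvalue so that the degenerate directions survive in the limit while the Euclidean directions remain nondegenerate.
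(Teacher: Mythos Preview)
Your overall strategy---propagate the pointwise structure at $x$ to all of $U$ via the conformal dynamics of $\phi^t$---is exactly the paper's approach. However, the execution has a genuine gap, and you in fact flag it yourself in your ``Main obstacle'' paragraph.

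The problem is your handling of the conformal distortion. You write that ``the conformal distortion $c(y,t)$ is governed by the eigenvalue of $T_x\phi^t$, which is $e^{-2t}$ on the maximally contracted directions.'' But the maximally contracted directions are $\Delta_2$, which is \emph{isotropic}: you cannot read off the conformal factor from an isotropic subspace. The conformal factor can only be computed from a direction along which $g$ is nonzero, and at this stage of your argument you have not established that any such direction exists in $\mathcal{H}(y)$ for general $y$. Your argument is therefore circular: you need the conformal factor to prove $\Delta_2(y)$ is in the radical, but you need to know $g$ is nonzero somewhere on $\mathcal{H}(y)$ to compute the conformal factor.

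The paper resolves this by reversing the logical order. It \emph{first} proves that $\Delta_1$ is positive definite everywhere on $U$: this holds at $x$, spreads to all of $\mathcal{Z}$ because $M_G^1$ preserves $\Delta_1$ (a point you also gloss over---the distributions are defined via the linearization chart, not intrinsically, so one must check the relevant $G$-elements respect them), hence to an open neighborhood of $\mathcal{Z}$, and then to all of $U$ since $(\phi^t)_*\Delta_1 = \Delta_1$. Once $\Delta_1$ is known to be positive definite, one takes a single coordinate vector $\partial_{d+1}\in\Delta_1$ and computes
\[
\lambda(y,t)\,g_y(\partial_{d+1},\partial_{d+1}) = e^{-2t}\,g_{\phi^t(y)}(\partial_{d+1},\partial_{d+1}),
\]
which gives $e^{2t}\lambda(y,t)\to C>0$. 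With the conformal factor now pinned down, the pairings $g_y(\partial_i,\partial_j)$ for $\partial_i\in\Delta_1$, $\partial_j\in\Delta_2$ (decay $e^{-3t}$) and for $\partial_i,\partial_j\in\Delta_2$ (decay $e^{-4t}$) are forced to vanish. Your ``conversely'' paragraph contains the right ingredient, but it needs to come first.
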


By maximally degenerate subspace, we mean the orthogonal of some maximally isotropic subspace.

\begin{proof}
Let us first see that $\Delta_1$ is positive definite all over $U$. This is already the case at $x$ since $\Delta_1(x) = \g_{-\alpha}(x) \oplus (T_x(G.x))^{\perp}$. Let $M_G^1 < G$ be the connected subgroup corresponding to $\m^1$. Since $\Ad(M_G^1)$ preserves $\g_{-\alpha}$, for all $g \in M_G^1$, $g_* \Delta_1 = \Delta_1$. In particular, since $\mathcal{Z}$ is contained in the $M_G^1$-orbit of $x$, we get that $\Delta_1$ is positive definite in restriction to $\mathcal{Z}$ and then, in some open neighborhood of $\mathcal{Z}$. Since $(\phi^t)_* \Delta_1 = \Delta_1$ and since for all $y \in U$, $\phi^t(y)$ converges to some point of $\mathcal{Z}$, we obtain that $\Delta_1$ is everywhere Euclidean.

Now, let $\lambda(y,t)$ be the conformal distortion of $\phi^t$ with respect to $g$. By definition, $\lambda(y,t) g_y(\partial_{d+1},\partial_{d+1}) = g_{\phi^t(y)}((\phi^t)_* \partial_{d+1}(y),(\phi^t)_* \partial_{d+1}(y)) = e^{-2t} g_{\phi^t(y)}(\partial_{d+1},\partial_{d+1})$. Then we obtain 
\begin{equation*}
e^{2t} \lambda(y,t) \rightarrow \frac{g_{y_{\infty}}(\partial_{d+1},\partial_{d+1})}{g_y(\partial_{d+1},\partial_{d+1})} =: C >0.
\end{equation*}
On the other hand, for all ${d+1} \leq i \leq n-d$ and $n-d+1 \leq j \leq n$, we have $\lambda(y,t) g_y(\partial_i,\partial_j) = \e^{-3t} g_{\phi^t(y)}(\partial_i,\partial_j)$. So, we obtain that $C g_y(\partial_i,\partial_j) = 0$, proving that $\Delta_1(y) \perp \Delta_2(y)$. Similarly, we obtain that $\Delta_2(y) \perp \Delta_2(y)$.
\end{proof}

\begin{lemma}
\label{lem:two_distributions}
Let $V = \R^{p,q}$ denote the standard pseudo-Euclidean space, and let $T$ be a $(3,1)$-tensor on $V$ having the same symmetries as a $(3,1)$-curvature tensor. Let $V_1,V_2 \subset V$ be two subspaces such that
\begin{enumerate}
\item $V_1 \cap V_2$ is non-degenerate;
\item $V = V_1 + V_2$.
\end{enumerate}
If $T(V_i,V_i,V_i) = 0$ for $i =1,2$ and if $\Ima T \subset V_1 \cap V_2$, then $T=0$.
\end{lemma}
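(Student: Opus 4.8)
The plan is to exploit the multilinearity of $T$ together with the two structural hypotheses $V = V_1 + V_2$ and $\Ima T \subset V_1 \cap V_2$, reducing the computation of an arbitrary value $T(u,v,w)$ to the ``diagonal'' values $T(V_i,V_i,V_i)$, which vanish by assumption. First I would write every vector of $V$ as a sum of a vector in $V_1$ and a vector in $V_2$ (possible by condition (2), though not uniquely); by trilinearity it then suffices to show $T(a,b,c)=0$ whenever each of $a,b,c$ lies in $V_1 \cup V_2$. So the task reduces to controlling the ``mixed'' terms, where the three arguments do not all lie in the same $V_i$.

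The key leverage is the hypothesis $\Ima T \subset V_1 \cap V_2$ combined with the curvature symmetries, which let me pair the output vector against a fourth argument and move indices around. Concretely, for a mixed term such as $T(a_1,a_1,a_2)$ with $a_1 \in V_1$, $a_2 \in V_2$, I would use the fact that $W(x,y,z) \in V_1 \cap V_2$ and the pairing identity $g(W_x(u,v,w),z) = -g(W_x(u,v,z),w)$ (the same antisymmetry in the last pair used at the end of \S\ref{sss:weyl_vanishing_pointwise}) to transfer arguments. Since $V_1 \cap V_2$ is non-degenerate by condition (1), testing the output against all of $V_1 \cap V_2$ detects whether it is zero; and because $V = V_1 + V_2$, testing against $V_1 \cap V_2 \subset V_1$ and against $V_1 \cap V_2 \subset V_2$ lets me rewrite each such pairing as a diagonal value in one of the two blocks. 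The plan is to run through the finitely many symmetry-types of mixed triples — those with two arguments in one subspace and one in the other, and those with all three spread across — collapsing each to a combination of the vanishing diagonal values by repeated use of the pair-symmetry and the first Bianchi identity.

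The main obstacle I expect is the genuinely ``fully mixed'' configuration, where even after using $V=V_1+V_2$ one cannot place two of the three slots in a common subspace without the pairing argument producing a term of the opposite type; here the first Bianchi identity $T(x,y,z)+T(y,z,x)+T(z,x,y)=0$ should be essential to close the system of relations, since it links the three cyclic permutations and prevents an endless cascade of mixed terms. I would therefore set up a small linear system among the mixed pairings indexed by which slots lie in $V_1\cap V_2$, and check that the curvature symmetries plus non-degeneracy of $V_1\cap V_2$ force all of them to vanish, whence $T=0$.
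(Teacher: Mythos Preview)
Your plan is correct and is essentially the paper's argument: the paper splits $V_i = F \oplus V_i'$ with $F = V_1 \cap V_2$, lists the finitely many mixed terms, and kills each one by pairing against an arbitrary $w' \in F$ and using the pair-exchange symmetry $\langle T(a,b,c),d\rangle = \langle T(c,d,a),b\rangle$ to produce a diagonal value $T(V_i,V_i,V_i)=0$, then concludes by non-degeneracy of $F$. Your anticipated ``fully mixed'' obstacle does not actually arise: with three input slots each lying in $V_1$ or $V_2$, pigeonhole always puts two of them in the same $V_i$, and since the test vector $w'$ belongs to both $V_1$ and $V_2$ it can be grouped with those two via pair-exchange---so the first Bianchi identity is never needed and no linear system has to be set up.
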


\begin{proof}
The proof is similar to the one of Lemma 9 of \cite{article3} presented in Lorentzian signature where $V_1$ and $V_2$ are degenerate hyperplanes. Let us note $F = V_1 \cap V_2$ and choose $V_1'$ and $V_2'$ such that $V_i = F \oplus V_i'$. We then have to prove that for all $u_1,v_1 \in V_1'$, $u_2,v_2 \in V_2'$ and $w \in F$, the following terms are zero:
\begin{equation*}
\begin{array}{l}
T(u_1,v_1,u_2), \ T(u_1,u_2,v_1), \  \\
T(u_2,v_2,u_1), \ T(u_2,u_1,v_2), \  \\
T(u_1,u_2,w),   \ T(u_2,w,u_1),   \ T(w,u_1,u_2).
\end{array}
\end{equation*}
By hypothesis, all these terms belong to $F$. And for all $w' \in F$, we see that all these terms are orthogonal to $w'$: for instance, $<T(u_1,u_2,v_1),w'> = <T(v_1,w',u_1),u_2>$ and $T(v_1,w',u_1) = 0$ since $v_1,w',u_1 \in V_1$. This finishes the proof since $F \cap F^{\perp} = 0$.
\end{proof}
 
The distribution $\mathcal{H}$ will play the role of $V_1$ in every $T_yM$, with $y \in U$. The idea is now to twist it by some suitable unipotent element in the isotropy of $x$ to obtain another distribution $\mathcal{H'} := g^* \mathcal{H}$ satisfying the same properties as $\mathcal{H}$ since the isotropy acts conformally.

\vspace{.2cm}

Let us choose $X_{\alpha}$ non-zero and consider the action of $g = e^{X_{\alpha}}$ on $\Delta_2(x) = \Ker \mathcal{H}(x)$. It preserves $T_x(G.x)$ on which its action is conjugate to the linear action on $\g / \g_x$ induced by $\Ad(g)$. Thus, for all $(X_{-2\alpha})_x \in \Delta_2(x)$, we have
\begin{equation*}
T_x g (X_{-2\alpha})_x = \underbrace{(X_{-2\alpha})_x}_{\in \Delta_2(x)} + \underbrace{[X_{\alpha},X_{-2\alpha}]_x}_{\in \Delta_1(x)} + \frac{1}{2} \underbrace{([X_{\alpha},[X_{\alpha},X_{-2\alpha}]])_x}_{\in \Delta_0(x)}
\end{equation*}
If $T_x g (X_{-2\alpha})_x \in \Delta_2(x)$, then the second and the third terms of the right hand side are zero. But $[X_{\alpha},X_{-2\alpha}]_x = 0$ implies $[X_{\alpha},X_{-2\alpha}]_x = 0$ since $\g_{-\alpha} \cap \g_x = 0$, which implies $X_{-2\alpha} = 0$, since $X_{\alpha} \neq 0$ (as it can be observed in matrix representations of $\g$). Consequently, $(g_* \Delta_2(x)) \cap \Delta_2(x) = 0$, and necessarily $(g^* \Delta_2) \cap \Delta_2 = 0$ on a neighborhood of $x$. 

Consider now the distribution $\mathcal{H}' = g^* \mathcal{H}$. It is maximally degenerate with kernel $g^* \Delta_2$ and $\mathcal{H} \cap \mathcal{H}'$ is positive definite. Since the Weyl tensor is conformally invariant, we have $W_y(g^* \mathcal{H},g^* \mathcal{H},g^* \mathcal{H}) = g^{-1}_* W_{g.y}(\mathcal{H},\mathcal{H},\mathcal{H}) = 0$ and $\Ima W \subset g^* \mathcal{H}$. Thus, the pair $(\mathcal{H},g^* \mathcal{H})$ satisfies the hypothesis of Lemma \ref{lem:two_distributions} in every $T_yM$, for $y$ in some suitable neighborhood of $x$, proving that such neighborhood must be conformally flat.
 
\section{Pseudo-Riemannian conformal actions of $F_4^{-20}$}
\label{s:f4}

Let $(M,g)$ be a compact pseudo-Riemannian manifold of signature $(p,q)$ on which $F_4^{-20}$ acts conformally. By Proposition \ref{prop:lower_bound}, we know that $\min(p,q) \geq 7$, since the restricted root-space associated to $2\alpha$ has dimension $7$. Contrarily to the cases of conformal actions of $\SU(1,k)$  and $\Sp(1,k)$, this lower bound is not sharp: we will see that the optimal index is $9$, a realization of which being observed with the action of $F_4^{-20}$ on $\Ein^{9,15}$ given in the introduction. So, we have to see that there are no conformal actions when the metric has index $7$ or $8$ and we will treat each situation independently.

\vspace{.2cm}

Throughout this section, $G$ denotes $F_4^{-20}$ and is assumed to act conformally on $(M,g)$, compact with signature $(p,q)$ whose index is optimal, and $\g = \a \oplus \m \oplus \g_{\pm \alpha} \oplus \g_{\pm 2\alpha}$ denotes a restricted root-space decomposition. We still note $\s = \a \oplus \g_{\alpha} \oplus \g_{2\alpha}$ and $S < F_4^{-20}$ the corresponding subgroup. We fix $x$ a point given by Proposition \ref{prop:virtual_isotropy}.  Recall that $\g_x \cap \g_{-2\alpha} = 0$. Moreover, since the only Abelian subspaces of $\g_{-\alpha}$ are lines (Lemma \ref{lem:lagrangian}), we must have $\dim \g_{-\alpha} \cap \g_x \leq 1$.

\vspace{.2cm}

Recall that $\m \simeq \so(7)$, and that $\dim \g_{\pm \alpha} = 8$ and $\dim \g_{\pm 2\alpha} = 7$. The adjoint action $\ad(\m)$ on $\g_{\pm \alpha}$ is conjugate to the spin representation, and the one on $\g_{\pm 2\alpha}$ to the regular representation. We will use that the only non-trivial sugalgebras of $\so(7)$ whose codimension is less or equal than $8$ are isomorphic to $\so(6)$ and $\g_2$ (see for instance \cite{bohm_kerr}), and that $G_2$-subgroups of $\Spin(7)$ are stabilizers of vectors in the spin representation (see \cite{varadajan}).

\subsection{Index $7$}
\label{ss:index7}

We assume here that $\min(p,q) = 7$. Then, by Proposition \ref{prop:lower_bound} we know that $\pi_x(\g_{-2\alpha})$ is a maximal isotropic subspace. We distinguish two cases:
\begin{itemize}
\item Either $\exists X_{-\alpha}, X_{-2\alpha}$ such that $b_x(\pi_x(X_{-2\alpha}),\pi_x(X_{-\alpha}))$,
\item Or $\pi_x(\g_{-\alpha})$ is positive definite and orthogonal to $\pi_x(\g_{-2\alpha})$.
\end{itemize}
By Observation \ref{obs:orthogonality}, if we are in the first case, then $\pi_x(\g_0)$ is isotropic and orthogonal to the maximally isotropic subspace $\pi_x(\g_{-2\alpha})$. So, we obtain $\g_0 \subset \g_x$. Since $\ad$ induces an irreducible representation of $\m$ on $\g_{-\alpha}$, and since $\g_x \cap \g_{-\alpha} \neq \g_{-\alpha}$, we obtain $\g_x \cap \g_{-\alpha} = 0$. Thus, $\pi_x(\g_{-\alpha})$ is an $8$-dimensional space, which must be isotropic by Observation \ref{obs:orthogonality}. This is not possible.

If we are in the second case, then $\pi_x(\g_0 \oplus \g_{\alpha} \oplus \g_{2\alpha})$ is isotropic, and consequently at most $7$-dimensional. By the same argument used in the proof of Lemma \ref{lem:signature_optimal}, we obtain that $\a \subset \g_x$. Since $\g_x$ is $\ad(\s)$-invariant, we also get $\g_{\alpha} \oplus \g_{2\alpha} \subset \g_x$.

The Lie subgroups of codimension $\leq 7$ of $\Spin(7)$ are $\Spin(7)$ itself, $\Spin(6)$ and $G_2$. We now distinguish three cases:
\begin{enumerate}
\item $\m \subset \g_x$;
\item $\m \cap \g_x$ has codimension $6$ in $\m$;
\item $\m \cap \g_x$ has codimension $7$ in $\m$.
\end{enumerate}

In case (1), we obtain $\g_x \cap \g_{-\alpha} = 0$. If we choose $X_{\alpha}$ and $X_{-2\alpha}$ such that $[X_{\alpha},X_{-2\alpha}] \neq 0$, since $\pi_x(\g_{-\alpha})$ is positive definite, we have
\begin{align*}
0 & \neq b_x(\pi_x([X_{\alpha},X_{-2\alpha}]),\pi_x([X_{\alpha},X_{-2\alpha}])) \\
  & = -b_x(\pi_x(X_{-2\alpha}),\pi_x([X_{\alpha},[X_{\alpha},X_{-2\alpha}]])).
\end{align*}
This is not possible since $\pi_x([X_{\alpha},[X_{\alpha},X_{-2\alpha}]]) = 0$.

\vspace{.2cm}

In case (2), $\m \cap \g_x$ is a Lie subalgebra of $\m$ which has codimension $6$ in $\m$. Thus, we obtain that $T_x(G.x) = \g_{-2\alpha}(x) \oplus \g_{-\alpha}(x) \oplus \m(x)$, with $\dim \m(x) = 6$. The subspace $\g_{-2\alpha}(x) \oplus \m(x)$ is $13$-dimensional and contains a $7$-dimensional isotropic subspace. Necessarily, it is degenerate and its kernel $K_x \neq 0$ is $\ad(\a)$ invariant. Thus, $K_x = (K_x \cap \g_{-2\alpha}(x)) \oplus (K_x \cap \m(x))$. Moreover $\g_{-2\alpha}(x) \oplus (K_x \cap \m(x))$ is isotropic, implying $K_x \cap \m(x) = 0$, and $\m(x) \oplus (K_x \cap \g_{-2\alpha}(x))$ isotropic implies that $K_x$ is a line included in $\g_{-2\alpha}(x)$.

Since $\g_{-\alpha}(x)$ is positive definite and orthogonal to $\g_{-2\alpha}(x) \oplus \m(x)$, $K_x$ is in fact the kernel of $T_x(G.x)$. As such, it is invariant by the isotropy, implying that $[K_x,\g_x] \subset K_x$. But this is not possible since for non-zero $X_{-2\alpha}$, we have $[X_{-2\alpha},\g_{\alpha}] = \g_{-\alpha}$.

\vspace{.2cm}

In case (3), we immediately get that $\g_{\alpha} \oplus \g_{2\alpha} \subset \g_x$. Arguments similar to the previous ones give that the orbit is non-degenerate. Precisely, $\pi_x(\g_{-2\alpha}) \oplus \pi_x(\m)$ has signature $(7,7)$ and is orthogonal to $\pi_x(\g_{-\alpha})$ which is positive definite and has dimension $8$ or $7$.

Thus, the homogeneous space $N = G/G_x$ is endowed with a conformally $G$-invariant non-degenerate metric $h$. We claim that such a metric does not exist. We can prove it by observing that $h$ must be conformally flat. Indeed, if $F_4^{-20}$ acts on a conformally flat pseudo-Riemannian manifold of signature $(p',q')$ with $p'+q' \geq 3$, then we can derive a Lie algebra embedding $\f_4^{-20} \hookrightarrow \so(p'+1,q'+1)$. Thus, if $(N,h)$ is conformally flat, then we obtain a faithful representation of $\f_4^{-20}$ on a vector space of dimension $23$ or $24$, which is absurd since the smallest degree of a faithful representation of $\f_4$ is $26$.

Conformal flatness of $h$ can be easily proved by using the arguments of Section \ref{sss:weyl_vanishing_pointwise}. Indeed, if $H \in \a$ is such that $\alpha(H) = 1$, then the flow of $H$ has the same form - this fact being immediate since $N$ is $G$-homogeneous. We also have $[\g_{2\alpha},\g_{-2\alpha}] = \g_0$ and $[\g_{\alpha},\g_{-\alpha}] = \g_0$. Thus, we can apply the same reasonning to conclude that the Weyl tensor of $(N,h)$ vanishes at some point $x_0 \in N$, and we directly obtain that $(N,h)$ is conformally flat by homogeneity.

\vspace{.2cm}

Finally, all the cases lead to a contradiction and we conclude that the metric cannot have index $7$. Remark that this section has proved the following

\begin{lemma}
\label{lem:index7}
There does not exist a proper Lie subgroup $H<G$ such that 
\begin{itemize}
\item $G/H$ carries a conformally $G$-invariant (eventually degenerate) metric whose isotropic subspaces are at most $7$-dimensional,
\item $\Ad(S) \h \subset \h$ and $\Ad(S)$ acts conformally on $\g/\h$.
\end{itemize}
\end{lemma}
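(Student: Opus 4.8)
The plan is to observe that the entire case analysis just carried out in this section is purely algebraic: it uses nothing about $(M,g)$ beyond the properties of the pair $(\g_x,q_x)$ that are now postulated as hypotheses. Concretely, I would set $\h=\Lie(H)$ and let $q$ denote the value at the base point $eH$ of the conformally $G$-invariant metric on $G/H$, so that $q$ is a quadratic form on $T_{eH}(G/H)\simeq\g/\h$; write $\pi:\g\to\g/\h$ for the projection. Hypothesis (2) says precisely that $\h$ is an $\Ad(S)$-invariant subalgebra and that $\Ad(S)$ acts conformally on $(\g/\h,q)$ — these are exactly the two conclusions of Proposition \ref{prop:virtual_isotropy} that were used throughout, while hypothesis (1) supplies the index bound ``isotropic subspaces at most $7$-dimensional'' that played the role of $\min(p,q)=7$. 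The properness $H\neq G$, i.e. $\h\neq\g$, replaces the fixed-point-free statement (Observation \ref{obs:fixed_points}) wherever the latter was invoked.

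With this dictionary I would simply re-read the section. First, $\h\cap\g_{-2\alpha}=0$ by the argument of Observation \ref{obs:transversality}: if not, Lemma \ref{lem:transversality} forces $\g_{-2\alpha}\subset\h$, then $\a\subset\h$, and finally $\h=\g$, against properness. Next, since the index is at most $7=\dim\g_{2\alpha}$, the algebraic proof of Proposition \ref{prop:lower_bound} applies verbatim and shows that $\pi(\g_{-2\alpha})$ is a $7$-dimensional, hence maximal, isotropic subspace of $(\g/\h,q)$. From here the dichotomy on whether $\pi(\g_{-\alpha})$ pairs non-trivially with $\pi(\g_{-2\alpha})$, and the subsequent split into the cases $\m\subset\h$, $\codim_{\m}(\m\cap\h)=6$, and $\codim_{\m}(\m\cap\h)=7$, each rest only on Observation \ref{obs:orthogonality}, the bracket relations in $\g$, the irreducibility of $\ad(\m)$ on $\g_{\pm\alpha}$, and the classification of low-codimension subalgebras of $\so(7)$ — all independent of $M$. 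Each case yields the same contradiction as before, so no such $H$ can exist.

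The step I expect to be the real obstacle to transplant is case (3), where the orbit is non-degenerate and one must rule out a conformally $G$-invariant non-degenerate metric on $N=G/H$ of signature summing to $23$ or $24$. There the contradiction came from conformal flatness of $(N,h)$ followed by the forbidden embedding $\f_4^{-20}\hookrightarrow\so(p'+1,q'+1)$ with $p'+q'\leq 24<26$. The point requiring care is that the Weyl-vanishing propagation of Section \ref{sss:weyl_vanishing_pointwise} and the local linearization of the flow of $H$ (Observation \ref{obs:linearizability}) were stated for honest conformal vector fields on a pseudo-Riemannian manifold. Here, however, $N=G/H$ is itself a genuine conformal homogeneous space, the elements of $\g$ act on it as honest conformal vector fields, and the flow of $H$ has the required $\R$-split form automatically by homogeneity; thus the Cartan-geometric tools apply on $N$, the Weyl tensor vanishes at the base point exactly as in Section \ref{sss:weyl_vanishing_pointwise}, and it then vanishes everywhere by $G$-homogeneity. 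Once this transfer is checked, the lemma follows immediately from the work already done.
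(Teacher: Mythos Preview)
Your proposal is correct and follows exactly the paper's approach: the paper's ``proof'' of this lemma is simply the remark that the entire case analysis of Section~\ref{ss:index7} is purely algebraic, using only the $\Ad(S)$-invariance of $\h$, the conformality of $\Ad(S)$ on $\g/\h$, the index bound, and properness --- precisely the dictionary you spell out. Your identification of case~(3) as the only step needing a word of justification, and your resolution (that $G/H$ is itself a conformal homogeneous space on which the flow of $H$ and the Weyl-vanishing argument apply directly, with homogeneity propagating flatness), is exactly how the paper handles it. One tiny slip: the dimension of $N$ is $21$ or $22$, not $23$ or $24$; the latter are the dimensions of the forbidden representation space $\R^{p'+1,q'+1}$.
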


\subsection{Index $8$}

We assume now that the metric has index $8$. We still have $\dim \pi_x(\g_{-2\alpha}) = 7$ and $\dim \pi_x(\g_{-\alpha}) = 7$ or $8$.

\begin{lemma}
The subspace $\g_{-2\alpha}(x)$ is isotropic.
\end{lemma}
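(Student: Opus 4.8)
The plan is to argue by contradiction, assuming that $\pi_x(\g_{-2\alpha})$ is \emph{not} isotropic. Applying Observation \ref{obs:orthogonality} with $\lambda=\mu=-2\alpha$ (so that, with $\alpha(H)=1$, the conformal weight is $c=-4$), I would note that the only pair of root-weights summing to $-4$ is $(-2\alpha,-2\alpha)$; hence $\pi_x(\g_{-\alpha}\oplus\g_0\oplus\g_\alpha\oplus\g_{2\alpha})$ is totally isotropic, in particular of dimension at most $\min(p,q)=8$. Since $\g_x\cap\g_{-2\alpha}=0$ we have $\g_x\subseteq\g_{-\alpha}\oplus\g_0\oplus\g_\alpha\oplus\g_{2\alpha}$, a space of dimension $8+22+8+7=45$, so this bound forces $\dim\g_x\ge 37$.

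Next I would pin down $\g_x$ exactly. Writing $\g_x=\bigoplus_\lambda(\g_x\cap\g_\lambda)$, the constraints are $\dim(\g_x\cap\g_{-\alpha})\le 1$ (Lemma \ref{lem:lagrangian}), the bounds $22,8,7$ on the remaining summands, and the fact that $\g_x$ is a subalgebra on which $\ad(\m)$ preserves each $\g_x\cap\g_\lambda$. Using that $\ad(\m)=\ad(\so(7))$ acts irreducibly on $\g_{\pm\alpha}$ (spin representation) and on $\g_{\pm 2\alpha}$ (standard representation), and that $\so(7)$ has no proper subalgebra of codimension $\le 5$, a short case analysis over the dimension distributions summing to $37$ or $38$ rules out everything except $\g_x=\g_0\oplus\g_\alpha\oplus\g_{2\alpha}=\p$, the minimal parabolic. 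In that case $\pi_x(\g_{-\alpha})$ is an $8$-dimensional, hence maximal, isotropic subspace; since it is orthogonal to $\pi_x(\g_{-2\alpha})$, any isotropic vector of $\pi_x(\g_{-2\alpha})$ would enlarge it to a $9$-dimensional isotropic subspace, which is impossible. Therefore $\pi_x(\g_{-2\alpha})$ is definite.

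It remains to exclude the parabolic case $\g_x=\p$, and I expect this to be the main obstacle. The key structural observation is that $x$ is then an \emph{attracting} fixed point of the flow $\phi^t$ of $H$: by Observation \ref{obs:linearizability} the map $T_x\phi^t$ is $\R$-split in $\CO(T_xM,g_x)$ with conformal distortion $e^{-4t}$, and the pairing imposed by $g_x$ forces its eigenvalues to be $e^{-t}$ on $\g_{-\alpha}(x)$, $e^{-2t}$ on $\g_{-2\alpha}(x)$ and the definite transverse directions, and $e^{-3t}$ on the directions $g_x$-dual to $\g_{-\alpha}(x)$ — all strictly less than $1$ for $t>0$. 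I would then feed this conformal attractor into the curvature-propagation machinery of Section \ref{s:conformal_flatness}: the unipotent isotropy elements $e^{X_\alpha},e^{X_{2\alpha}}$, together with $[\g_{\pm\alpha},\g_{\mp\alpha}]=[\g_{\pm 2\alpha},\g_{\mp 2\alpha}]=\g_0$, should force the Weyl tensor to vanish along the orbit, and the contraction should spread this flatness to a neighborhood, yielding an embedding $\f_4^{-20}\hookrightarrow\so(p'+1,q'+1)$ of dimension too small relative to the minimal $26$-dimensional faithful representation of $\f_4$.

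The genuine difficulty — and the reason $F_4^{-20}$ is harder than the symplectic case — is that the orbit $G/P$ here is \emph{degenerate}: its induced metric has the $8$-dimensional radical $\g_{-\alpha}(x)$. Consequently the Weyl-tensor argument of Section \ref{s:conformal_flatness}, which is tailored to maximally isotropic but \emph{non-degenerate} orbits, does not transpose verbatim, and Lemma \ref{lem:index7} does not apply since its isotropic subspaces have dimension $8$ rather than $\le 7$. I would therefore expect to replace the direct conformal-flatness step by a careful analysis of the local dynamics of the conformal attractor at $x$, and this adaptation to the degenerate orbit is where the real work lies.
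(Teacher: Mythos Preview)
Your proposal is essentially the paper's argument, and your identification of $\g_x=\p$ together with the eigenvalue pattern $e^{-t},e^{-2t},e^{-3t}$ on $T_xM$ is exactly right. The only point on which you diverge is the final paragraph, where you overestimate the difficulty.

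The degeneracy of the orbit is not an obstacle here; it is what makes this case \emph{easier} than Section~\ref{s:conformal_flatness}. Precisely because the radical $\g_{-\alpha}(x)$ is $8$-dimensional and maximally isotropic, the Sub-lemma on hyperbolic elements of $\so(p,q)$ forces $T_x\phi^t$ to have \emph{all} eigenvalues strictly less than $1$: there is no $e^{0}$-eigenspace. Hence $x$ is a genuine conformal attractor, and the contraction-rate analysis (the analogue of Lemma~\ref{lem:contraction_rates}) kills almost everything immediately: the only component of $W_x$ that can survive is $W_x(V_1,V_1,V_1)\in V_3$, where $V_1=\g_{-\alpha}(x)$. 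A single application of the unipotent trick with $f=e^{X_\alpha}$ and the relation $[\g_\alpha,\g_{-2\alpha}]=\g_{-\alpha}$ then gives $W_x=0$. Since every nearby point flows into $x$, conformal flatness propagates to a full neighbourhood without any need for the two-distributions Lemma~\ref{lem:two_distributions}. So the ``careful analysis'' you anticipate is not required: the paper's argument in this case is shorter than the one in Section~\ref{s:conformal_flatness}, not longer.

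One small imprecision: the final contradiction is not that $\so(p'+1,q'+1)$ has dimension below $26$ (indeed $q$ may be arbitrarily large), but that the minimal nontrivial representation of $\f_4^{-20}$ has signature $(10,16)$, so $\f_4^{-20}$ cannot embed in any $\so(9,N)$.
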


\begin{proof}
Assume that $\pi_x(\g_{-2\alpha})$ is not isotropic. By Observation \ref{obs:orthogonality}, we obtain that $\pi_x(\g_{-\alpha} \oplus \g_0 \oplus \g_{\alpha} \oplus \g_{2\alpha})$ is isotropic and orthogonal to $\pi_x(\g_{-2\alpha})$. Since $\dim \pi_x(\g_{-\alpha}) \geq 7$, we get that $\dim \pi_x(\g_0) \leq 1$. Since $\m \simeq \so(7)$ does not admit codimension $1$ subalgebras, we get $\m \subset \g_x$. By irreducibility of $\ad(\m)|_{\g_{-\alpha}}$, we get $\g_{-\alpha} \cap \g_x = 0$, implying that $\g_x = \p := \g_0 \oplus \g_{-\alpha} \oplus \g_{2\alpha}$. At last, since $\pi_x(\g_{-2\alpha}) \perp \pi_x(\g_{-\alpha})$, we obtain that $\pi_x(\g_{-2\alpha})$ is positive definite.

\vspace{.2cm}

If $P < G$ is the proper parabolic subgroup corresponding to $\p$, we have proved that the orbit of $x$ has the form $G/P$ and carries a conformally $G$-invariant maximally degenerate metric. But it is clear that such a metric exist. So, the question is to prove that we cannot embed conformally and $G$-equivariantly this homogeneous space into $(M,g)$. This is why we now turn to more geometric considerations, even though our goal is to prove that such a situation \textit{does not} occur.

\vspace{.2cm}

Let $H \in \a$ be such that $\alpha(H) = 1$ and let $\phi^t$ be the conformal flow it generates. By Lemma \ref{lem:unipotent}, we get that $\phi^t$ is linearizable near $x$ and that $\{T_x \phi^t\} < \CO(T_xM, g_x)$ is an hyperbolic one-parameter subgroup. Moreover, since $\g_{-2\alpha}(x)$ is positive definite, we obtain that $e^{2t} T_x \phi^t \in \SO(T_xM,g_x)$.

\begin{sublemma}
\label{sublem:hyperbolic_isometric_element}
Let $X \in \so(p,q)$ be an hyperbolic element, and $V \subset \R^{p,q}$ be a maximally isotropic, $X$-invariant subspace such that $X|_V = \lambda \id$, $\lambda \neq 0$. Then, there exists $V'$ maximally isotropic such that $V \oplus V'$ has signature $(p,p)$, and $X$ has the form
\begin{equation*}
X = 
\begin{pmatrix}
\lambda \id & & \\
 & 0 & \\
 & & -\lambda \id
\end{pmatrix},
\end{equation*}
with respect to $\R^{p,q}  = V \oplus (V \oplus V')^{\perp} \oplus V'$.
\end{sublemma}

\begin{proof}
We have that $X$ preserves $V^{\perp}$ and induces on it an $\R$-split linear map. By semi-simplicity, let $W$ be $X$-invariant and such that $V^{\perp} = V \oplus W$. Since $V$ is maximally isotropic, $W$ is positive definite, and $X|_W \in \so(W)$ implies $X|_{W} = 0$. At last, let $V'$ be $X$-invariant and such that $\R^{p,q} = V^{\perp} \oplus V'$. Let $v' \in V'$ be an eigenvector of $X$ with eigenvalue $\lambda'$. Then, for all $v \in V$,
\begin{equation*}
\lambda <v,v'> = <Xv,v'> = - \lambda' <v,v'>.
\end{equation*}
Since $v' \notin V^{\perp}$, we can choose $v$ such that $<v,v'> \neq 0$, proving $\lambda' = -\lambda$. Thus, $X$ acts homothetically and non-trivially on $V'$, implying that $V'$ is isotropic and $V' \perp W$.
\end{proof}

In our situation, $e^{2t}T_x \phi^t$ acts homothetically on $V_1 = \g_{-\alpha}(x)$, with ratio $e^t$. This proves that there are $V_2,V_3 \subset T_xM$, with $\g_{-2\alpha}(x) \subset V_2$ positive definite, $V_3$ maximally isotropic, and $T_xM = V_1 \oplus V_2 \oplus V_3$ and
\begin{equation*}
T_x\phi^t =
\begin{pmatrix}
e^{-t}\id & & \\
 & e^{-2t}\id & \\
 & & e^{-3t}\id &
\end{pmatrix}.
\end{equation*}
Let $W$ denote the Weyl tensor of $(M,g)$. Then, the arguments of Section \ref{sss:weyl_vanishing_pointwise} can be naturally adapted to conclude that for all $u,v,w \in T_xM$, with respective components $u_i,v_i,w_i$ on $V_i$, we have $W_x(u,v,w) = W_x(u_1,v_1,w_1) \in V_3$ (adapt the Lemma \ref{lem:contraction_rates}). For all $X_{-2\alpha}$ and $X_{\alpha}$, if $f = e^{X_{\alpha}}$, we have for all $v_1,w_1$,
\begin{equation*}
0 = f_* W_x((X_{-2\alpha})_x,v_1,w_1) = W_x(f_* (X_{-2\alpha})_x,v_1,w_1) = W_x([X_{\alpha},X_{-2\alpha}]_x,v_1,w_1),
\end{equation*}
the first equality following from $f_* v_1 = v_1$ and $f_* w_1 = w_1$ (this is immediate because $v_1,w_1 \in \g_{-\alpha}(x)$). This implies $W_x(V_1,V_1,V_1)= 0$ since $\g_{-\alpha} = [\g_{\alpha},\g_{-2\alpha}]$.

\vspace{.2cm}

Thus, $W_x = 0$. Let $y$ be a point in the linearization neighborhood of $x$. Since $\phi^t(y) \rightarrow x$ when $t \to +\infty$, we get that for all $u,v,w \in T_yM$, $\|(\phi^t)_* W_y(u,v,w)\| = o(e^{-3t})$ for any norm $\|.\|$ on this neighborhood. By an argument similar to Lemma \ref{lem:contraction_rates}, we get $W_y = 0$. Thus, a neighborhood of $x$ is conformally flat, and this is a contradiction since there does not exist an embedding of $\f_4^{-20}$ into some $\so(9,N)$.
\end{proof}

\begin{lemma}
The subspace $\g_{-\alpha}(x)$ is orthogonal to $\g_{-2\alpha}(x)$. Consequently, it is contained in a Lorentzian subspace of $T_xM$.
\end{lemma}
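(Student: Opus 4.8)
The plan is to argue by contradiction. Suppose there exist $X_{-\alpha} \in \g_{-\alpha}$ and $X_{-2\alpha} \in \g_{-2\alpha}$ with $b_x(\pi_x(X_{-\alpha}),\pi_x(X_{-2\alpha})) \neq 0$, and let $H \in \a$ with $\alpha(H) = 1$. Recall from the proof of Observation \ref{obs:orthogonality} that $b_x$ is homogeneous of a single weight $c$ under $h^t = \Ad(e^{tH})$, i.e. $(h^t)^* b_x = e^{ct} b_x$; the assumed pairing forces $c = (-\alpha)(H) + (-2\alpha)(H) = -3$. Hence for $\lambda,\mu \in \{0,\pm\alpha,\pm 2\alpha\}$ the block $b_x(\pi_x(\g_\lambda),\pi_x(\g_\mu))$ vanishes unless $\lambda(H) + \mu(H) = -3$, that is unless $\{\lambda,\mu\} = \{-\alpha,-2\alpha\}$. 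In particular every vector of $\pi_x(\g_0 \oplus \g_\alpha \oplus \g_{2\alpha})$ lies in $K_x := \ker b_x$, because no root has weight $-3 - \mu(H)$ for $\mu(H) \in \{0,1,2\}$.

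The technical heart is then to force $\m \subseteq \g_x$. Since $\g_x$ is $\ad(\a)$-invariant, $\g/\g_x$ is graded and $\pi_x(\g_{-2\alpha})$ meets $\pi_x(\g_0 \oplus \g_\alpha \oplus \g_{2\alpha})$ trivially. As $K_x$ is totally isotropic and orthogonal to everything, and $\pi_x(\g_{-2\alpha})$ is isotropic (weight $-4 \neq -3$), the sum $\pi_x(\g_{-2\alpha}) \oplus \pi_x(\g_0 \oplus \g_\alpha \oplus \g_{2\alpha})$ is a totally isotropic subspace of $(\g/\g_x, b_x)$. Its dimension is $7 + \dim \pi_x(\g_0 \oplus \g_\alpha \oplus \g_{2\alpha})$, which must be at most the index $8$; therefore $\dim \pi_x(\m) \leq 1$. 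Thus $\m \cap \g_x$ has codimension at most $1$ in $\m \simeq \so(7)$, and since $\so(7)$ is simple it admits no codimension-$1$ subalgebra, so $\m \subseteq \g_x$. Moreover, as $\ad(\m)$ is irreducible on $\g_{-\alpha}$ and $\m \subseteq \g_x$, the space $\g_x \cap \g_{-\alpha}$ is either $0$ or all of $\g_{-\alpha}$; the latter would give $\g_{-2\alpha} = [\g_{-\alpha},\g_{-\alpha}] \subseteq \g_x$, contradicting $\g_x \cap \g_{-2\alpha} = 0$, so $\g_x \cap \g_{-\alpha} = 0$.

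Now comes the representation-theoretic contradiction. Since $\m \subseteq \g_x$, the connected compact subgroup $M_G$ fixes $x$ and preserves the conformal class $[q_x]$ on $\g/\g_x$; the conformal factor is a continuous homomorphism from the compact group $M_G$ into $\R_{>0}$, hence trivial, so $q_x = b_x$ itself is $\m$-invariant. By the previous paragraph the projections $\pi_x$ restrict to $\m$-equivariant isomorphisms of $\g_{-\alpha}$ and $\g_{-2\alpha}$ onto $\pi_x(\g_{-\alpha})$ and $\pi_x(\g_{-2\alpha})$, identifying them with the spin and the standard representations of $\so(7)$. But $q_x$ can be non-zero only on the block $\pi_x(\g_{-\alpha}) \times \pi_x(\g_{-2\alpha})$, and its $\m$-invariance makes that block an element of $\Hom_{\m}(\g_{-\alpha}, \g_{-2\alpha}^*)$, which vanishes because the $8$-dimensional spin and the $7$-dimensional standard representations are non-isomorphic irreducibles. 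Hence $q_x = 0$, contradicting our assumption that the pairing is non-zero. This proves $\g_{-\alpha}(x) \perp \g_{-2\alpha}(x)$.

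For the final assertion, $\g_{-2\alpha}(x)$ is a $7$-dimensional isotropic subspace in a metric of index $8$, so the non-degenerate quotient $\g_{-2\alpha}(x)^\perp / \g_{-2\alpha}(x)$ has index $8 - 7 = 1$, i.e. is Lorentzian; since $\g_{-\alpha}(x) \subseteq \g_{-2\alpha}(x)^\perp$ meets $\g_{-2\alpha}(x)$ trivially, it lifts isometrically into a Lorentzian subspace of $T_xM$. The step I expect to require the most care is the grading bookkeeping of the second paragraph—correctly reading off that the single totally isotropic subspace $\pi_x(\g_{-2\alpha}) \oplus \pi_x(\g_0 \oplus \g_\alpha \oplus \g_{2\alpha})$ already caps $\dim \pi_x(\m)$ by $1$—since once $\m \subseteq \g_x$ is secured, the Schur-type vanishing of an invariant spin--standard pairing is immediate.
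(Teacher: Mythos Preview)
Your argument is correct and follows the same overall route as the paper: assume a non-zero $(-\alpha,-2\alpha)$ pairing, use Observation~\ref{obs:orthogonality} to force all other blocks of $b_x$ to vanish, squeeze $\dim\pi_x(\m)\leq 1$ via an $8$-dimensional isotropic subspace, and conclude $\m\subset\g_x$ by simplicity of $\so(7)$. The only difference is the endgame: the paper observes that $\g(x)=\g_{-2\alpha}(x)\oplus\g_{-\alpha}(x)$ (dimensions $7$ and $8$, both isotropic) must then be degenerate with a one-dimensional kernel inside $\g_{-\alpha}(x)$, which $\g_0\subset\g_x$ cannot preserve; you instead invoke Schur's lemma to kill any $\m$-invariant pairing between the spin and standard representations. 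These are two phrasings of the same irreducibility obstruction, and your Schur formulation is a touch cleaner since it bypasses the small computation identifying the kernel as a line.
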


\begin{proof}
Assume the contrary. Then, by Observation \ref{obs:orthogonality}, $\pi_x(\g_{-\alpha} \oplus \g_0 \oplus \g_{\alpha} \oplus \g_{2\alpha})$ is isotropic. Since $\dim \pi_x(\g_{-\alpha}) \geq 7$, we obtain that $\g_0 \oplus \g_{\alpha} \oplus \g_{2\alpha} \subset \g_x$, and then that $\g_{-\alpha} \cap \g_x = 0$. Then, we obtain that $\g(x) = \g_{-2\alpha}(x) \oplus \g_{-\alpha}(x)$, these two subspace being isotropic and having dimension $7$ and $8$ respectively. Necessarily, $\g(x)$ is degenerate and its kernel is a $1$-dimensional subspace of $\g_{-\alpha}(x)$. This is a contradiction since $\g_0 \subset \g_x$ cannot preserve this line.
\end{proof}

By the previous Lemma, $\pi_x(\g_{-\alpha})$ cannot be isotropic. Thus, $\pi_x(\p)$ is isotropic and orthogonal to it.

\begin{lemma}
We have $\a \subset \g_x$, and by $\ad \s$-invariance, we get $\s \subset \g_x$.
\end{lemma}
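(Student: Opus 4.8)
The plan is to show $\a \subset \g_x$ by producing an isotropic vector in $\pi_x(\g_0)$ that is forced into the kernel of the metric on the orbit, using the nilpotence of the Cartan-involution image. By the two previous lemmas of this subsection, $\pi_x(\g_{-2\alpha})$ is isotropic, and $\pi_x(\g_{-\alpha})$ is orthogonal to $\pi_x(\g_{-2\alpha})$ and cannot be isotropic. Consequently Observation \ref{obs:orthogonality} forces $\pi_x(\g_0 \oplus \g_{\alpha} \oplus \g_{2\alpha})$ to be isotropic and orthogonal to $\pi_x(\g_{-\alpha})$. In particular $\pi_x(\g_0)$ is isotropic.

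The key step is the same trick used at the end of the proof of Lemma \ref{lem:signature_optimal} to show $\a \subset \g_x$ in the symplectic case. First I would pick any non-zero $X_{-2\alpha} \in \g_{-2\alpha}$ and set $A := [\theta X_{-2\alpha}, X_{-2\alpha}] \in \a$, which is non-zero. Since $\theta X_{-2\alpha} \in \g_{2\alpha} \subset \s$ and $\ad(\theta X_{-2\alpha})$ is nilpotent, the corresponding element of $\Ad(S)$ is unipotent, hence acts isometrically on $\g/\g_x$ (a unipotent linear conformal map is automatically an isometry). This skew-symmetry gives
\begin{equation*}
b_x(\pi_x(X_{-2\alpha}), \pi_x(A)) = - b_x(\pi_x(A), \pi_x(X_{-2\alpha})),
\end{equation*}
so $b_x(\pi_x(X_{-2\alpha}), \pi_x(A)) = 0$. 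Thus $\pi_x(A)$ is an isotropic vector that is orthogonal to $\pi_x(\g_{-2\alpha})$: it lies in $\pi_x(\g_0)$, which is already isotropic and orthogonal to $\pi_x(\g_{-\alpha})$, and by the computation it is also orthogonal to $\pi_x(\g_{-2\alpha})$. Since $\pi_x(\g_{-2\alpha})$ is a maximal isotropic subspace (it is $7$-dimensional and the index is $8$, but the extra isotropic direction is controlled by $\pi_x(\g_{-\alpha})^{\perp}$), the vector $\pi_x(A)$ together with $\pi_x(\g_{-2\alpha})$ spans an isotropic subspace, forcing $\pi_x(A) \in \pi_x(\g_{-2\alpha})$; but $\pi_x(A) \in \pi_x(\g_0)$ and the root-space gradation gives $\pi_x(\g_0) \cap \pi_x(\g_{-2\alpha}) = 0$ unless $\pi_x(A) = 0$. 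Hence $A \in \g_x$, and since $A$ was built from an arbitrary non-zero $X_{-2\alpha}$ and $[\theta X_{-2\alpha}, X_{-2\alpha}]$ ranges over a non-zero subset of $\a$, while $\a$ is one-dimensional, we conclude $\a \subset \g_x$.

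The $\ad(\s)$-invariance of $\g_x$ then immediately upgrades this: since $\a \subset \g_x$ and $[\a, \g_\alpha] = \g_\alpha$, $[\a, \g_{2\alpha}] = \g_{2\alpha}$, we get $\g_\alpha \oplus \g_{2\alpha} \subset \g_x$, i.e. $\s = \a \oplus \g_\alpha \oplus \g_{2\alpha} \subset \g_x$, as claimed. The main obstacle I anticipate is justifying cleanly that $\pi_x(A)$ must vanish rather than merely lie in a larger isotropic space — one must use that $\pi_x(\g_{-2\alpha})$ realizes a maximal isotropic subspace of the \emph{non-degenerate} part of the orbit, so that any isotropic vector orthogonal to it and transverse to $\pi_x(\g_{-\alpha})$ is forced to lie inside $\pi_x(\g_{-2\alpha})$ itself; combined with the gradation this gives the conclusion, and the rest is the routine $\ad(\s)$-invariance argument already used twice before in the paper.
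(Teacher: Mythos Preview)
Your argument correctly reproduces the first step of the paper's proof: using the nilpotence of $\ad(\theta X_{-2\alpha})$ you obtain that $\pi_x(\a)$ is isotropic and orthogonal to $\pi_x(\g_{-2\alpha})$, and from Observation \ref{obs:orthogonality} it is also orthogonal to $\pi_x(\g_{-\alpha})$. However, the step ``forcing $\pi_x(A)\in\pi_x(\g_{-2\alpha})$'' is exactly the point that fails in index $8$: since $\dim\pi_x(\g_{-2\alpha})=7$ while the index is $8$, the subspace $\pi_x(\g_{-2\alpha})\oplus\R\,\pi_x(A)$ can perfectly well be an $8$-dimensional isotropic subspace of $T_xM$, and nothing you have written excludes this. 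Your attempted patch --- that $\pi_x(\g_{-2\alpha})$ is maximally isotropic ``in the non-degenerate part of the orbit'' --- is circular: at this stage nothing is known about the signature (or even the non-degeneracy) of $q_x$ on $\g/\g_x$, and in fact if $\g_{-\alpha}(x)$ happens to be positive definite then $\a(x)$ can sit as the extra null direction in $\g_{-2\alpha}(x)^\perp$ without any contradiction from your orthogonality relations.

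The paper closes this gap by a genuinely different mechanism. It \emph{assumes} $\a\cap\g_x=0$, so that $\g_{-2\alpha}(x)\oplus\a(x)$ is $8$-dimensional and maximally isotropic; this forces $\g_\alpha\oplus\g_{2\alpha}\subset\g_x$ and $\g_{-\alpha}(x)$ Euclidean. Then, since $\a(x)$ lies in the kernel of $q_x$, one passes to the quotient $V=\g/(\a\oplus\g_x)$, on which $q_x$ induces a quadratic form $q_0$ whose isotropic subspaces have dimension at most $7$. Because $\a\oplus\g_x$ is an $\ad(\s)$-invariant subalgebra, $\Ad(S)$ still acts conformally on $(V,q_0)$, and this contradicts Lemma~\ref{lem:index7} (the non-existence of such index-$7$ configurations). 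In other words, the key missing ingredient in your approach is precisely the appeal to the index-$7$ analysis; the direct isotropy-counting argument you propose cannot by itself rule out $\a(x)\neq 0$.
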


\begin{proof}
Assume that $\a \cap \g_x = 0$. We can use the argument of the proof of Lemma \ref{lem:signature_optimal} to conclude that $\a(x)$ is an isotropic line orthogonal to $\g_{-2\alpha}(x)$, which is isotropic and $7$-dimensional. For any $X \in \g_{\alpha} \oplus \g_{2\alpha}$, by Obervation \ref{obs:orthogonality}, $X_x$ is isotropic and orthogonal to $\g_{-2\alpha}(x) \oplus \a(x)$. The latter being maximally isotropic, we obtain $X_x = 0$, \textit{i.e.} $\g_{\alpha} \oplus \g_{2\alpha} \subset \g_x$. Always by Observation \ref{obs:orthogonality}, $\g_{-\alpha}(x)$ is orthogonal to a maximally isotropic subspace, so it must be Euclidean.

Consider $V := \g / (\a \oplus \g_x)$. Since $\a$ is included in the kernel of $q_x$, the latter induces a well-defined quadratic form $q_0$ on $V$. Moreover, since $T_x(G.x) \subset \a(x)^{\perp}$ and the metric has index $8$, by definition of $q_x$ we get that the isotropic subspaces of $(V,q_0)$ are at most $7$-dimensional. At last, since $\a \oplus \g_x$ is a Lie subalgebra stable under the action of $\ad(\s)$, we obtain a conformal action of $\s$ on $(V,q_0)$. This contradicts Lemma \ref{lem:index7}.
\end{proof}

Finally, we get the orthogonal decomposition $\g(x) = \g_{-2\alpha}(x) \oplus \g_{-\alpha}(x) \oplus \m(x)$, with $\g_{-2\alpha}(x)$ isotropic and $7$ dimensional, $\m(x)$ isotropic, and $\g_{-\alpha}(x)$ subLorentzian. Since $\dim \m(x) \leq 8$, we distinguish three possibilities: (a) $\m \subset \g_x$, (b) $\m \cap \g_x \simeq \so(6)$ and (c) $\m \cap \g_x \simeq \g_2$. Each of which is proved to be impossible.	

\vspace{.2cm}

In case (a), we get $\g_{-\alpha} \cap \g_x = 0$ and $\g_{-\alpha}(x)$ must be Euclidean since the only non-trivial quadratic form on $\R^{8}$ invariant by $\Spin(7)$ is positive definite. We can apply the same argument as in the case $\min(p,q) = 7$ to conclude that this is not possible. 

In case (b), we get $T_x(G.x) = \g_{-2\alpha}(x) \oplus \g_{-\alpha}(x) \oplus \m(x)$ with $\g_{-2\alpha}(x)$ isotropic and $7$-dimensional, $\m(x)$ isotropic and $6$-dimensional and $\g_{-2\alpha}(x) \perp \g_{-\alpha}(x) \perp \m(x)$. Moreover, since $\m \cap \g_x$ cannot preserve a $7$-dimensional subspace of $\g_{-\alpha}$, we also have $\g_{-\alpha} \cap \g_x = 0$. Necessarily, $T_x (G.x)$ is degenerate and the kernel meets $\g_{-2\alpha}(x)$. But if $(X_{-2\alpha})_x$ is in the kernel, then so is $[X_{\alpha},X_{-2\alpha}]_x \in \g_{-\alpha}(x)$ for any $X_{\alpha}$. And the kernel cannot meet $\g_{-\alpha}(x)$ for if not it would yield a direction $X_{-\alpha}^0 \in \g_{-\alpha}$ preserved by $\ad(\m \cap \g_x)$. 
So, we get $[X_{\alpha},X_{-2\alpha}]= 0$ for any $X_{\alpha}$, implying $X_{-2\alpha} = 0$, which is a contradiction.

In case (c), $\g_{-2\alpha}(x)$ and $\m(x)$ are isotropic and $7$-dimensional. Moreover, the subspace $\g_{-2\alpha}(x) \oplus \m(x)$ is non-degenerate with signature $(7,7)$. Indeed, if $K_x$ is its kernel, then $K_x \cap \g_{-2\alpha}(x)$ and $K_x \cap \m(x)$ are at most $1$-dimensional. The stabilizer $\g_x$ contains a subalgebra $\g_2 \subset \m \simeq \so(7)$. The action of $\ad(\m)$ on $\g_{-2\alpha}$ and $\m$ being clear, we see that this $\g_2$ factor cannot preserve a line in $\g_{-2\alpha}(x)$ nor $\m(x)$. 
Thus, $K_x = 0$.

Finally, we distinguish two sub-possibilities: (i) $\g_{-\alpha}(x)$ is non-degenerate, \textit{i.e.} it is Euclidean or Lorentzian ; (ii) $\g_{-\alpha}(x)$ is a degenerate, non-negative subspace with a $1$-dimensional kernel. 

In case (c)(i), the orbit $G.x$ is non-degenerate. If $H \in \a$, by analyzing the action of $\phi_H^t$ near $x$ on $G.x$, we can conclude that $(M,g)$ induces a conformally flat metric on $G.x$, leading to the same contradiction as case (3) of Section \ref{ss:index7}.

In case (c)(ii), we have $\g_{-\alpha} \cap \g_x = 0$, so that $\g_x = (\a \oplus (\m \cap \g_x)) \ltimes (\g_{\alpha} \oplus \g_{2\alpha})$ with $\m \cap \g_x \simeq \g_2$, and there is $X_{-\alpha}^0 \neq 0$ such that the kernel of the restriction of $g_x$ to $T_x(G.x)$ corresponds to the line $\R X_{-\alpha}^0 + \g_x$ in $\g / \g_x$. Since the isotropy $G_x$ has to preserve this kernel, $\R X_{-\alpha}^0 + \g_x$ must be a subalgebra of $\g$, which we note $\h$. Then, let $H < G$ be the corresponding connected Lie subgroup. Since $X_{-\alpha}^0$ gives the direction of the kernel in $\g / \g_x$, we obtain a well defined non-degenerate quadratic form $q_0$ on $\g / \h$ such that, if $\pi : G/G_x \rightarrow G/H$ is the natural projection, then $\pi_* : (\g / \g_x,q_x) \rightarrow (\g / \h,q_0)$ is isometric and $\ad(\g_x) < \co(\g / \h, q_0)$. Let $x_0 = H \in G/H$ and $\phi^t := e^{tX_{-\alpha}^0}$, so that $\phi^t$ is a conformal flow $G/G_x$, commuting with $\pi$. Thus, it is also a conformal flow of $G/H$, proving that $\ad(\h) < \co(\g / \h, q_0)$. Finally, we obtain that $G/H$ is endowed with a conformally $G$-invariant non-degenerate metric of index $7$, which is not possible by Lemma \ref{lem:index7}.

\bibliographystyle{amsalpha}
\bibliography{references_article_semi_simples.bib}
\nocite{*}
\end{document}